\documentclass[11pt,a4paper]{article}

\usepackage[margin=25mm]{geometry}
\usepackage{amsmath,amssymb,amsthm}
\usepackage{enumitem}
\usepackage{microtype}
\usepackage{aliascnt}

\newtheorem{theorem}{Theorem}[section]

\newaliascnt{proposition}{theorem}
\newtheorem{proposition}[proposition]{Proposition}
\aliascntresetthe{proposition}

\newaliascnt{lemma}{theorem}
\newtheorem{lemma}[lemma]{Lemma}
\aliascntresetthe{lemma}

\newaliascnt{corollary}{theorem}
\newtheorem{corollary}[corollary]{Corollary}
\aliascntresetthe{corollary}

\theoremstyle{definition}

\newaliascnt{definition}{theorem}

\aliascntresetthe{definition}

\newaliascnt{example}{theorem}

\aliascntresetthe{example}

\theoremstyle{remark}

\newaliascnt{remark}{theorem}
\newtheorem{remark}[remark]{Remark}
\aliascntresetthe{remark}

\usepackage{hyperref}
\usepackage[nameinlink,noabbrev]{cleveref}

\hypersetup{
    colorlinks=true,
    linkcolor=blue,
    citecolor=blue,
    urlcolor=blue,
    bookmarksnumbered=true,
    pdfdisplaydoctitle=true,
    pdftitle={
        Neural network realization of binary refinement iterates
        via a two-chart atlas selector
    },
    pdfauthor={Tsogtgerel Gantumur},
    pdfsubject={Exact ReLU realization of binary refinement iterates},
    pdfkeywords={
        ReLU networks;
        refinement operators;
        cascade algorithms;
        continuous piecewise linear functions;
        exact realization;
        residual dynamics
    }
}

\newcommand{\R}{\mathbb R}
\newcommand{\Z}{\mathbb Z}
\newcommand{\T}{\mathbb T}
\newcommand{\supp}{\operatorname{supp}}
\newcommand{\ReLU}{\operatorname{ReLU}}
\newcommand{\Rep}{\operatorname{Rep}}
\newcommand{\Sw}{\operatorname{Sw}}
\newcommand{\CPwL}{\operatorname{CPwL}}
\newcommand{\Ups}{\Upsilon}
\newcommand{\emb}{\iota}
\newcommand{\mar}{\eta}
\newcommand{\trans}{\mathsf T}

\title{Neural network realization of binary refinement iterates via\\a two-chart atlas selector}
\author{Tsogtgerel Gantumur\\[0.5ex]
\small McGill University, Montr\'{e}al, QC, Canada\\
\small National University of Mongolia, Ulaanbaatar, Mongolia\\
\small Mongolian Academy of Sciences, Institute of Mathematics and Digital Technology\\[0.5ex]
\small \texttt{gantumur.tsogtgerel@mcgill.ca}}

\date{July 21, 2026}

\begin{document}
\maketitle

\begin{abstract}
Refinement operators generate many of the functions used in wavelet constructions, subdivision
schemes, and geometric modeling.  Their finite iterates can develop rapidly increasing numbers of
linear pieces, making them a natural test case for the expressive power of deep neural networks.
Earlier work showed that, for scalar binary refinement with a finitely supported mask, every
compactly supported continuous piecewise linear seed has finite refinement iterates that admit exact
ReLU realizations of fixed width and depth growing linearly with the number of refinement steps.
The present paper gives a new construction of this known realization theorem.

The main difficulty is that the binary refinement cascade is driven by discontinuous digit choices,
whereas ReLU networks produce continuous piecewise linear maps.  We resolve this difficulty by
representing the residual dynamics on a polygonal model of the circle and describing each residual
position in two overlapping coordinate systems, one ordinary and one shifted by one half.  Their
discontinuities occur at different points.  The network changes between the two descriptions only
where both are valid and the corresponding linear cascade updates already agree.  The change can
therefore be implemented exactly by a ReLU switch, without multiplying the evolving state by a
variable selector.

The same construction yields exact evaluation of every continuous piecewise linear function on the
circle satisfying the natural endpoint compatibility condition, without requiring positivity or
vanishing near the endpoints.  Localized elementary seeds are realized by a two-pass network: the
first pass computes the terminal scalar value, and the second reruns the residual trajectory while
propagating the finite-dimensional cascade state.  Translation covariance, finite decomposition into
translated elementary seeds, and gluing of local realizations then extend the construction to arbitrary
compactly supported continuous piecewise linear seeds in a preserved support window.
\end{abstract}

\medskip
\noindent\textbf{2020 Mathematics Subject Classification.}
Primary 41A30; Secondary 68T07, 42C40.

\smallskip
\noindent\textbf{Keywords.}
ReLU networks; refinement operators; cascade algorithms; continuous piecewise linear
functions; exact realization; residual dynamics.


\section{Introduction}
\label{sec:int}

\subsection{Background and motivation}
\label{ss:intro-background}

Neural-network approximation theory seeks structural explanations for the expressive power of deep
networks and for the role of depth in representing complex functions
\cite{approx,nnapprox}.  Refinement operators provide a particularly transparent class of recursive
examples arising in approximation theory, wavelet theory, subdivision, and geometric modeling.
In the one-dimensional homogeneous binary case, such an operator has the form
\begin{equation}
\label{eq:intro-refinement-operator}
        (Vf)(t)
        =
        \sum_{j\in\Z}c_jf(2t-j),
\end{equation}
where the mask \((c_j)_{j\in\Z}\) is finitely supported.  Whereas refinable-function theory often
studies fixed points or normalized eigenfunctions of \(V\), we consider the exact ReLU realization
of the finite iterates \(V^nf\).

The scalar binary realization theorem is already known.  Daubechies et al.\ proved that the
iterates of every compactly supported CPwL seed admit exact ReLU realizations of fixed width and
depth \(O(n)\) \cite{refinable}.  The loop-controller construction of \cite{loop} subsequently gave
an exact residual-dynamics formulation for homogeneous vector-valued \(M\)-ary refinement.
Accordingly, the contribution of the present paper is not a new scalar binary conclusion, but a
different mechanism for resolving the discontinuities that enter the cascade representation.

After vectorizing the unit-interval pieces of a compactly supported function, one refinement step
becomes a finite-dimensional matrix cascade driven by the binary residual map.  Its digit choices
are discontinuous, while ReLU networks produce continuous piecewise linear functions.  Earlier
exact constructions use CPwL selector and readout mechanisms together with terminal masking:
the relevant terminal profiles vanish near residual seams, making any remaining ambiguity
harmless \cite{refinable,loop}.

Here we replace this masking mechanism inside the residual cascade by a two-chart atlas selector.
The residual circle is covered by the ordinary and half-shifted charts.  Their same-chart branch-cut
sets are disjoint, and selector transitions are confined to trusted overlaps where the competing
local descriptions are simultaneously valid.  On these overlaps, finite windows of one intrinsic
residual covector satisfy exact chart-transition identities, and the corresponding local adjoint
updates agree after passage to a common source chart.  A ReLU switch can therefore select between
them without multiplying a state by a variable selector.

The binary setting isolates this atlas mechanism in its simplest form, but the formulation is
motivated by non-product refinements and other systems in which residual dynamics naturally
requires several interacting local descriptions.  The proof keeps two roles separate: the atlas
selector controls the residual adjoint cascade, while terminal localization is used only to reduce
general CPwL seeds to translated local reference atoms.

\subsection{Main result and contribution}
\label{ss:intro-main-result}

The main theorem has the following informal form.

\begin{theorem}[Main theorem, informal form]
Let \(V\) be a scalar binary refinement operator with finitely supported mask preserving a fixed compact
support window.  If \(f:\R\to\R\) is CPwL and supported in that window, then, for every
\(n\ge1\), the function \(V^nf\) is realized exactly by a ReLU network whose width is bounded
independently of \(n\) and whose depth is \(O(n)\).
\end{theorem}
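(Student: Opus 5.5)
We will prove the informal main theorem through the cascade representation, in four steps: vectorize the seed, handle the residual dynamics with a two-chart atlas, evaluate locally by a two-pass network, and glue.

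\textbf{Step 1: reduction to a matrix cascade.} Suppose \(f\) is supported in the preserved window \([0,N]\). Write \(f\) through its unit-interval pieces
\[
        F(x)=\bigl(f(x),f(x+1),\dots,f(x+N-1)\bigr),\qquad x\in[0,1).
\]
Then one application of \(V\) becomes \(F\mapsto A_{d}F(\tau(x))\). Here \(d\in\{0,1\}\) is the binary digit of \(x\), \(\tau(x)=2x-d\) is the residual (doubling) map, and \(A_0,A_1\) are the two \(N\times N\) mask matrices. Iterating \(n\) times gives
\[
        V^nf(x+k)=e_k^{\trans}A_{d_1}\cdots A_{d_n}F(\tau^n x).
\]
Reading this product from the left, as an adjoint (covector) recursion \(w_{j}^{\trans}=w_{j-1}^{\trans}A_{d_j}\), gives fixed state dimension \(N\). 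It lets the network carry \(w_j\) and the residual \(\tau^j x\) layer by layer. The depth is then \(O(n)\) provided each layer is realizable by a fixed-width ReLU block.

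\textbf{Step 2: the atlas selector.} This is the main obstacle. The digit \(d_j\) is discontinuous at dyadic seams, so the product \(w^{\trans}A_{d}\) cannot be formed by a continuous map in a single chart. I would place the residual on a polygonal model of the circle \(\T=\R/\Z\) and use two charts:
\begin{itemize}
\item the ordinary chart, with cut at \(0\);
\item the half-shifted chart, coordinate \(x+\tfrac12\), with cut at \(\tfrac12\).
\end{itemize}
The two charts have disjoint same-chart cut sets. In each chart the covector is stored in a window of length \(N+1\) adapted to that chart. At the overlaps, I would verify exact chart-transition identities: the two local adjoint updates, transported to a common source chart, coincide there. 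Granting this, a ReLU switch such as \(\min/\max\) against a hat-shaped selector in the residual coordinate can interpolate between the two updates. Within the transition zone the two candidates are equal, so no product of the state with a variable selector is needed. What must be checked is that the transition zones can be chosen with uniform width under doubling, so the selector is a fixed CPwL function.

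\textbf{Step 3: two-pass local evaluation.} For an elementary localized seed (a hat atom), I would first run the residual trajectory to compute the terminal value \(F(\tau^n x)\). This is a fixed CPwL function of the residual, and by endpoint compatibility it is realizable on the circle. A second pass reruns the residual trajectory while propagating the adjoint state from Step 2. The final scalar is a bilinear pairing of the state with the terminal data. This pairing must be made CPwL, and I would handle it by keeping the terminal data as fixed CPwL coefficient functions contracted into the state's linear update. Concretely, the terminal vector is absorbed as the initial covector by reversing the roles of the two passes, so that only linear updates of the state are ever applied. For the unit-interval restrictions, the integer part \(k\) is handled by finitely many parallel copies.

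\textbf{Step 4: general seeds.} A general CPwL \(f\) in the window is a finite sum of integer translates of elementary seeds. \(V^n\) commutes with integer translation up to the dilation \(2^{-n}\) shift, so each summand is a translated copy of the Step 3 network. Gluing the local realizations across unit intervals with fixed CPwL partition masks, and summing in parallel, gives width bounded independently of \(n\) and depth \(O(n)\).
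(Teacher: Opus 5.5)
Your architecture matches the paper's. The residual orbit is carried exactly on a polygonal loop. The ordinary and half-shifted charts give local adjoint updates that agree on trusted overlaps, so a ReLU switch never multiplies the state by the selector. A first pass computes the terminal factor, which is absorbed into the initial covector of a second pass. Translation covariance plus gluing then handle general seeds. The gap is in how Steps 3 and 4 fit together. In Step 3 you need the terminal datum $F(x)=(f(x+i))_i$ to be a CPwL function on the circle ``by endpoint compatibility''. For this vector that means $F(0)=F(1)$, i.e.\ $f(k)=f(k+1)$ for all $k$, which together with $f(0)=0$ forces $f$ to vanish at every integer. In Step 4 you then write a general seed as a finite sum of \emph{integer} translates of such seeds, but every such sum again vanishes on $\Z$, so already a hat centered at $1$ is not covered. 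The repair is the paper's:
\begin{itemize}
\item split $f$ into nodal hats of support length less than $|J_{\rm ref}|$;
\item move each hat into the reference interval by a \emph{real} shift $\delta$, using $V^n(\tau_\delta g)=\tau_{2^{-n}\delta}V^ng$, which holds for all $\delta\in\R$;
\item glue each atom's iterate into a global realization on $\R$ first, so that patch networks are only evaluated on $[0,1]$;
\item only then precompose with $t\mapsto t-2^{-n}\delta$.
\end{itemize}

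Vanishing at the integers is also not enough for Step 3 itself, which never says which component of the atlas state is paired with the terminal datum. The propagated state is only safe-exact: its ordinary component is guaranteed correct only when $y_n\in K_0$. For a seed with $h\ne0$ on $(0,\eta)$, the terminal factor is nonzero at points where that component is unconstrained. Switching to the shifted component does not help. Already for the datum $h(\bar y)e_0$, the shifted-chart representative is $h(\bar y)e_{-1}$ for $\bar y<\frac12$ but $h(\bar y)e_0$ for $\bar y>\frac12$. So the readouts $\langle\psi_{s,n},e_{-1}\rangle$ and $\langle\psi_{s,n},e_0\rangle$ would have to be switched across the ordinary seam, and they agree on a transition band only if $h$ vanishes on a neighborhood of that seam. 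This is why the paper restricts to atoms $h(\bar y)u$ with $\supp h\subset J_{\rm ref}$ and $[t]\in\operatorname{int}K_0$ for $t\in J_{\rm ref}$. If $h(y_n)\ne0$, the ordinary component is exact; if $h(y_n)=0$, the initial state is $(0,0)$, which every update preserves. The same rank-one form is what makes your absorption step legitimate. What gets multiplied into the fixed initial covector $\lambda$ is the computed scalar $h(y_n)$, which is a linear operation, and afterwards $\langle\psi_{0,n},u\rangle$ is a fixed linear readout; a genuinely vector-valued terminal datum cannot be absorbed this way. Two smaller points. First, the switch is exact only at a scale $A\ge\|Y_1-Y_0\|_\infty$, which may have to grow like $M^n$ with the adjoint products; this is harmless, since only weights depend on $n$. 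Second, the gluing must combine patch outputs additively, through input ramps and the joining identities, because multiplying network outputs by partition masks is not a ReLU operation.
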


The constants may depend on the mask, the support window, and the finite CPwL complexity of
\(f\), but not on \(n\).  This is an exact real-arithmetic representation theorem; no claim is made
about finite-precision stability or bit complexity.

The proof has three principal components.  First, the exact loop controller of \cite{loop}
transports the residual orbit continuously on a polygonal model of the circle.  It also yields an
atlas scalar readout for every CPwL circle profile satisfying the natural seam compatibility
condition, without requiring positivity or endpoint vanishing.

Second, the residual cascade is formulated intrinsically as a local system over the circle.
An intrinsic residual covector is stored through finite padded windows in the ordinary and half-shifted
charts.  Fixed source--target branch matrices propagate these windows locally, and the
load-bearing finite overlap identity shows that competing adjoint updates agree on trusted chart
overlaps.

Third, reference rank-one terminal atoms supported inside one ordinary safe chart are handled by
a two-pass construction.  The first pass computes the terminal scalar factor.  The second pass
reruns the forward residual orbit while propagating the initialized atlas adjoint state.  A direct
ordinary-chart pairing then recovers the reference-atom contribution; no terminal chart selector is
needed.  Translation covariance, finite atomic decomposition, and gluing of finitely many unit-interval realizations
extend the construction to arbitrary compactly supported CPwL seeds.

This separation is the principal conceptual feature of the proof.  Atlas overlap agreement resolves
the residual cascade, whereas terminal localization serves only as a local atom-reduction device.
The trusted regions and selector transition widths are fixed independently of \(n\).  The crude
exponential gate scales that may occur arise instead from possible growth of finite adjoint products.

\subsection{Organization}
\label{ss:intro-organization}

\Cref{sec:preliminaries} develops the intrinsic residual fibers, the two lift charts, the ideal
infinite branch mechanism, and the ReLU gate for exact switching.
\Cref{sec:scalar-readout} recalls the loop controller and constructs the scalar atlas readout.
\Cref{sec:compatible-fibers} passes to padded finite covector windows and proves local adjoint
exactness and finite overlap agreement.
\Cref{sec:adjoint-branch-selectors} constructs and iterates the atlas adjoint selector.
\Cref{sec:rank-one-terminal-atoms} combines the scalar and adjoint constructions to realize
reference rank-one terminal atoms, and
\Cref{sec:general-cpwl-seeds} reduces general compactly supported CPwL seeds to these atoms.
\Cref{sec:conclusions} summarizes the result and its broader atlas-based interpretation.

\section{Preliminaries}
\label{sec:preliminaries}

This section fixes the notation and basic constructions used in the proof.  We first formulate the
refinement cascade intrinsically in terms of residual fibers and adjoint covectors.  We then introduce
the ordinary and half-shifted lift charts on the residual circle, together with their source--target
branch coordinates, and describe the ideal infinite-chart mechanism underlying overlap agreement.
We conclude with the elementary ReLU gate used to switch exactly between candidates that agree
on trusted overlaps.  The padded finite chart windows and finite branch matrices used in the actual
atlas recursion are introduced in \Cref{sec:compatible-fibers}.

\subsection{Network classes and CPwL maps}
\label{subsec:network-cpwl}

For positive integers \(W,D,d,N\), we write
\(\Ups_{W,D}(\ReLU;d,N)\)
for the set of outputs of fully connected ReLU networks with width \(W\), depth \(D\), input
dimension \(d\), and output dimension \(N\).  The final realization theorem has input dimension
\(d=1\), although several intermediate modules act on fixed finite-dimensional state spaces.

A map is called CPwL if it is continuous and piecewise affine on a locally finite polyhedral
subdivision of its domain.  For a compactly supported function on \(\R\), this is equivalent to
piecewise affinity on a finite partition of its support.

\begin{remark}
\label{rem:cpwl-basic}
We use repeatedly the following standard facts.
\begin{enumerate}[label=(\roman*),leftmargin=2em]
    \item Every scalar CPwL function on an interval with finitely many breakpoints is realized
    exactly by a one-hidden-layer ReLU network of width proportional to the number of affine
    pieces.

    \item Every fixed finite-dimensional CPwL module used below admits an exact ReLU realization
    of fixed width and depth.  Indeed, after choosing a finite polyhedral subdivision of the relevant
    compact state set, its scalar coordinate functions are continuous piecewise affine finite-element
    functions and therefore admit exact ReLU realizations \cite{relu-fem}.  Whenever such a module
    is initially defined only on a compact polyhedral state set, we fix a global CPwL extension
    before using it in a network.

    \item Affine precomposition changes neither the asymptotic width nor the depth.

    \item Finite direct sums and finite linear combinations enlarge the width only by a fixed factor.

    \item Composing a depth-\(O(n)\) network with a fixed CPwL module preserves depth \(O(n)\).
\end{enumerate}
When a function is said to be realized on \([0,1]\), this means that it agrees there with the
restriction of a globally defined ReLU-network output.
\end{remark}

Throughout the paper, ``fixed'' means independent of the refinement depth \(n\).  We do not
optimize the constants.  The weights and biases may grow with \(n\); the principal complexity
statement concerns fixed width and depth \(O(n)\).

\subsection{The refinement operator and the support window}
\label{subsec:refinement-support-window}

Let \((c_j)_{j\in\Z}\) be a finitely supported scalar mask, and define the homogeneous binary
refinement operator by
\begin{equation}
\label{eq:binary-refinement-operator}
    (Vf)(t)=\sum_{j\in\Z}c_jf(2t-j).
\end{equation}
If the mask is identically zero, then \(V^nf=0\) for every \(n\ge1\), and the main result is
immediate.  We therefore assume throughout the paper that the mask is not identically zero.

We assume that \(V\) preserves a fixed support window \([0,L]\), where \(L\ge1\) is an integer:
\begin{equation}
\label{eq:support-preservation}
    \supp f\subset[0,L]
    \qquad\Longrightarrow\qquad
    \supp Vf\subset[0,L].
\end{equation}
This hypothesis reduces the cascade to a fixed finite-dimensional state space.  The constants in
the realization bounds may depend on \(L\) and on the finite mask support, but are independent of
the refinement depth \(n\).

\subsection{Residual fibers and the invariant cascade}
\label{subsec:residual-fibers-invariant-cascade}

Vectorization is natural because all arguments appearing in one refinement step have the same
residual class modulo integers.  We therefore separate the residual dynamics from the coordinate
sequence indexed by integer translates.
Let
\[
        \T:=\R/\Z,
        \qquad
        d:\T\to\T,
        \qquad
        d([y]):=[2y],
\]
be the residual circle and its doubling map.  The algebraic model fiber is
\(E:=\R^{(\Z)}\), the space of finitely supported bi-infinite sequences.  For
\(\kappa\in\Z\), define the shift \(C_\kappa:E\to E\) by
\begin{equation}
\label{eq:c-shift}
        (C_\kappa w)_i:=w_{i+\kappa}.
\end{equation}
Then we have \(C_\kappa^{-1}=C_{-\kappa}\).
Now define the {\em residual local system} by
\[
        \mathcal E
        :=
        (\R\times E)/{\sim},
        \qquad
        (\tilde y,w)
        \sim
        (\tilde y+\kappa,C_\kappa w),
        \quad
        \kappa\in\Z.
\]
Its projection is \([\tilde y,w]\mapsto[\tilde y]\in\T\), and its fiber over \(y\) is denoted by \(\mathcal E_y\).
In geometric language, \(\mathcal E\) is a flat local system over \(\T\), or equivalently the
algebraic analogue of a flat vector bundle with fiber \(E\) and monodromy given by the shift
\(C_1\).

For a fixed \(y\in\T\), a lift of \(y\) is a number \(\tilde y\in\R\) satisfying
\([\tilde y]=y\).  Such a choice identifies \(\mathcal E_y\) with \(E\): the vector
\(w\in E\) represents the intrinsic element \([\tilde y,w]\in\mathcal E_y\).  If the lift is
changed from \(\tilde y\) to \(\tilde y+\kappa\), then the same intrinsic element is represented
by \(C_\kappa w\).

More generally, a \emph{lift chart} consists of a connected open set \(U\subset\T\) and a continuous map
\[
        \ell:U\to\R,
        \qquad
        [\ell(y)]=y
        \quad
        \text{for every }y\in U.
\]
Its image \(\widetilde U:=\ell(U)\) is an open interval on which the covering projection
\(\pi:\R\to\T\) restricts to a homeomorphism
\(\pi|_{\widetilde U}:\widetilde U\longrightarrow U\),
with inverse \(\ell\).  Thus \(\tilde y=\ell(y)\) is the real coordinate of \(y\) in this lift
chart.  All chart coordinates used below are of this form.

A compactly supported function \(f:\R\to\R\) determines a section \(W_f\) of
\(\mathcal E\).  In the lift chart \((U,\ell)\), its coordinate representative is the map
\[
        w_f^\ell:U\to E,
        \qquad
        w_f^\ell(y)
        :=
        \bigl(f(\ell(y)+i)\bigr)_{i\in\Z}.
\]
Equivalently, as a function of the numerical lift coordinate
\(\tilde y\in\widetilde U\), one may write
\[
        \widetilde w_f^\ell(\tilde y)
        :=
        \bigl(f(\tilde y+i)\bigr)_{i\in\Z},
        \qquad
        w_f^\ell
        =
        \widetilde w_f^\ell\circ\ell.
\]
If \((U,\ell)\) and \((U',\ell')\) are two lift charts, then on each connected component of
\(U\cap U'\) there is an integer \(\kappa\) such that
\(\ell'(y)=\ell(y)+\kappa\).
On that component, their coordinate representatives satisfy
\[
        w_f^{\ell'}(y)
        =
        C_\kappa w_f^\ell(y).
\]
Hence the local representatives patch together to define one intrinsic section
\(W_f(y)\in\mathcal E_y\), \(y\in\T\).

The refinement operator induces an intrinsic linear map
\[
        \mathcal B_y:\mathcal E_{d(y)}\longrightarrow\mathcal E_y.
\]
To describe it in coordinates, choose a target lift \(\tilde y\) of \(y\) and a source lift
\(\tilde y^+\) of \(d(y)\).  They determine an integer \(q\) through
\begin{equation}
\label{eq:invariant-lift-relation}
        2\tilde y=q+\tilde y^+.
\end{equation}
Here the fiber over \(d(y)\) is the source fiber and the fiber over \(y\) is the target fiber.
The integer \(q\) is the source--target branch label in these coordinates.

For \(w\in E\), define the infinite branch matrix by
\[
        (T_q^\infty w)_i
        :=
        \sum_{\ell\in\Z}
        c_{q+2i-\ell}w_\ell.
\]
The mask is finitely supported, so \(T_q^\infty:E\to E\) is well defined.  The intrinsic map is
given in the chosen lifts by
\[
        \mathcal B_y[\tilde y^+,w]
        :=
        [\tilde y,T_q^\infty w].
\]
This definition is independent of the lifts.  Indeed, if the target and source lifts are changed
by \(a,b\in\Z\), respectively, then the new branch label is \(q'=q+2a-b\), and a direct
coordinate calculation gives
\begin{equation}
\label{eq:T-transform}
        T_{q'}^\infty C_b
        =
        C_aT_q^\infty.
\end{equation}
Thus the resulting representatives determine the same intrinsic target element.

For a compactly supported function \(f\), the coordinate calculation
\[
        (Vf)(\tilde y+i)
        =
        \sum_{j\in\Z}c_jf(2\tilde y+2i-j)
        =
        \sum_{\ell\in\Z}
        c_{q+2i-\ell}f(\tilde y^++\ell)
\]
shows that
\[
        W_{Vf}(y)
        =
        \mathcal B_yW_f(d(y)).
\]
More generally, starting from any section \(W_0\) of \(\mathcal E\), define its primal cascade by
\begin{equation}
\label{eq:invariant-primal-cascade}
        W_{m+1}(y)
        :=
        \mathcal B_yW_m(d(y)),
        \qquad
        m\ge0.
\end{equation}
For the section induced by \(f\), this gives \(W_m=W_{V^mf}\).

To represent point observables on residual sections, we introduce the dual residual fibers.
Since \(E=\R^{(\Z)}\), its algebraic dual is \(E^*\cong\R^\Z\), with pairing
\[
        \langle\psi,w\rangle
        :=
        \sum_{i\in\Z}\psi_iw_i.
\]
The sum is finite because \(w\) is finitely supported.  The dual residual local system is
\[
        \mathcal E^*
        :=
        (\R\times E^*)/{\sim},
        \qquad
        (\tilde y,\psi)
        \sim
        (\tilde y+\kappa,C_{-\kappa}^*\psi),
\]
where the algebraic dual \(C_\kappa^*:E^*\to E^*\) is defined by
\(\langle C_\kappa^*\psi,w\rangle=\langle\psi,C_\kappa w\rangle\),
or equivalently,
\[
        (C_\kappa^*\psi)_i
        =
        \psi_{i-\kappa}.
\]

An element \(\Phi_y\in\mathcal E_y^*\) acts on the value \(W(y)\in\mathcal E_y\) of a residual
section and thereby defines the intrinsic point observable
\(W\mapsto\langle\Phi_y,W(y)\rangle\).
Indeed, if \(\psi^{\tilde y}\) represents \(\Phi_y\) in the lift \(\tilde y\), then its
representative in the lift \(\tilde y+\kappa\) is
\begin{equation}
\label{eq:covector-transform}
        \psi^{\tilde y+\kappa}
        =
        C_{-\kappa}^*\psi^{\tilde y}.
\end{equation}
Together with the primal lift-change rule, this gives
\(\langle C_{-\kappa}^*\psi, C_\kappa w \rangle = \langle\psi,w\rangle\),
so the observable is independent of the chosen lift.

The dual of the intrinsic refinement map is
\[
        \mathcal B_y^*:
        \mathcal E_y^*
        \longrightarrow
        \mathcal E_{d(y)}^*.
\]
In the target lift \(\tilde y\) and source lift \(\tilde y^+\), it is represented by
\((T_q^\infty)^*:E^*\to E^*\), where
\[
        \bigl\langle
                (T_q^\infty)^*\psi,w
        \bigr\rangle
        =
        \bigl\langle
                \psi,T_q^\infty w
        \bigr\rangle.
\]
Its coordinate formula is
\[
        \bigl((T_q^\infty)^*\psi\bigr)_\ell
        =
        \sum_{i\in\Z}
        c_{q+2i-\ell}\psi_i.
\]
For each \(\ell\), only finitely many indices \(i\) contribute.

Let \(y_m:=d^m(y_0)\), and choose an initial intrinsic covector
\(\Phi_0\in\mathcal E_{y_0}^*\).  Its adjoint cascade is defined by
\begin{equation}
\label{eq:invariant-adjoint-cascade}
        \Phi_{m+1}
        :=
        \mathcal B_{y_m}^*\Phi_m,
        \qquad
        m=0,\ldots,n-1.
\end{equation}
For the primal cascade \eqref{eq:invariant-primal-cascade}, duality gives
\begin{equation}
\label{eq:invariant-pairing-preservation}
        \bigl\langle
                \Phi_m,W_{n-m}(y_m)
        \bigr\rangle
        =
        \bigl\langle
                \Phi_{m+1},W_{n-m-1}(y_{m+1})
        \bigr\rangle.
\end{equation}
Iterating this identity yields
\begin{equation}
\label{eq:invariant-terminal-pairing}
        \bigl\langle
                \Phi_0,W_n(y_0)
        \bigr\rangle
        =
        \bigl\langle
                \Phi_n,W_0(y_n)
        \bigr\rangle.
\end{equation}
Thus a pairing of the initial covector with the \(n\)-th evolved primal section at \(y_0\)
can be evaluated by propagating the covector forward along the residual orbit and pairing the
resulting covector \(\Phi_n\) with the initial primal section evaluated at the terminal residual
point \(y_n\).  This is the adjoint formulation used below.

\subsection{Charts, lifts, and source--target branch coordinates}
\label{subsec:residual-circle-charts}

We reserve \(t\) for the external scalar input of the realizing network and \(x\) for local chart
coordinates.  A generic representative of a point of \(\T\) will be denoted by \(\tau\).

We now fix the two lift charts used by the atlas selector.  Set
\[
        \mathcal A:=\{0,s\},
        \qquad\text{where}\quad
        s:=\textstyle\frac12.
\]
The labels \(0\) and \(s\) refer to the ordinary and half-shifted charts, respectively.
For \(\beta\in\mathcal A\), let
\[
        U_\beta:=\T\setminus\{[\beta]\}.
\]
Every \(y\in U_\beta\) has a unique coordinate \(x_\beta(y)\in(0,1)\) satisfying
\(y=[\beta+x_\beta(y)]\).
Thus the chart chooses the lift
\[
        \tilde y_\beta(y):=\beta+x_\beta(y).
\]
The excluded point \([\beta]\) is the chart seam.  Since the two seams are distinct, the domains
\(U_0\) and \(U_s\) cover \(\T\).
Using the representative \(y=[\tau]\) with \(\tau\in[0,1)\), we can write the coordinates as
\[
        x_0([\tau])=\tau,
        \qquad
        0<\tau<1,
\]
and
\[
        x_s([\tau])
        =
        \begin{cases}
        \tau-s,   & s<\tau<1,\\
        \tau-s+1, & 0\le \tau<s.
        \end{cases}
\]
The point \(\tau=s\) is excluded from the shifted chart.

Although the genuine coordinate interval is \((0,1)\), we shall also use the canonical closed
parametrization
\[
        x\longmapsto[\beta+x],
        \qquad
        0\le x\le1.
\]
Its two endpoints represent the same seam point \([\beta]\).  This closed coordinate window is
used only for CPwL extensions, one-sided branch formulas, and endpoint compatibility.  Statements
of chart exactness will be made on safe subsets of the corresponding open chart domain, except
when seam values are treated explicitly.

Fix a target chart \(\beta\in\mathcal A\) over \(y\) and a source chart
\(\alpha\in\mathcal A\) over \(d(y)\).  
This terminology agrees with the direction
\(\mathcal B_y:\mathcal E_{d(y)}\rightarrow\mathcal E_y\).
In the ordinary one-chart description, the binary residual
map is written as \(2x=q+R(x)\), where \(q\in\{0,1\}\) is the binary digit.  With two lift
charts, the same integer bookkeeping also includes the chart offsets.  
Suppose that \(y\in U_\beta\) and \(d(y)\in U_\alpha\).  
As the corresponding lifts are
\(\tilde y=x_\beta(y)+\beta\) and \(\tilde y^+=x_\alpha(d(y))+\alpha\),
the relation
\eqref{eq:invariant-lift-relation} determines a unique integer \(q\in\Z\) such that
\begin{equation}
\label{eq:source-target-branch-relation}
        2\bigl(x_\beta(y)+\beta\bigr)
        =
        q+\alpha+x_\alpha(d(y)).
\end{equation}
Thus \(q\) is the chart-dependent analogue of the usual binary digit: it is the integer
translation needed to express the doubled target lift in the chosen source chart.

For fixed \(\alpha,\beta,q\), the residual map has the affine coordinate expression
\begin{equation}
\label{eq:source-target-residual}
        x_\alpha(d(y))
        =
        R_{\alpha,\beta,q}\bigl(x_\beta(y)\bigr),
        \qquad
        R_{\alpha,\beta,q}(x)
        :=
        2(x+\beta)-\alpha-q.
\end{equation}
We call this a {\em local branch} of the residual map.  Here the chart pair
\((\beta,\alpha)\) specifies the target and source coordinates, while \(q\) specifies which
integer branch of the lifted doubling relation is used.  Its open domain of validity is
\begin{equation}
\label{eq:source-target-validity}
        \Omega_{\alpha,\beta,q}
        :=
        \left\{
                x\in(0,1):
                0<R_{\alpha,\beta,q}(x)<1
        \right\}.
\end{equation}
If \(x\in\Omega_{\alpha,\beta,q}\), then we have
\[
        d([\beta+x])
        =
        [\alpha+R_{\alpha,\beta,q}(x)],
\]
and both the target coordinate \(x\) and the source coordinate
\(R_{\alpha,\beta,q}(x)\) lie in the corresponding open chart intervals.

The branch domain is determined by the two coordinate conditions
\(0<x<1\) and \(0<R_{\alpha,\beta,q}(x)<1\).  Its endpoints can therefore arise either because the
source coordinate reaches \(0\) or \(1\), equivalently
\(2x+2\beta-\alpha\in\Z\), or because the target coordinate reaches \(x=0\) or \(x=1\).
In the first case, the image residual reaches the seam of the chosen source chart and the integer
label \(q\) changes across the point.  In the second case, the chosen target coordinate reaches one
of the two representatives of its chart seam, while the source coordinate may either remain in the
interior of its chart or simultaneously reach its seam.  These distinctions are relative to the
chosen lift charts rather than intrinsic features of \(\T\).  On the closure of each nonempty branch
domain, we retain the same affine formula \(R_{\alpha,\beta,q}\) as a one-sided coordinate formula.
The resulting local branches are listed in
\Cref{tab:one-dimensional-half-shift-branches}.

\begin{table}[t]
\centering
\begingroup
\renewcommand{\arraystretch}{1.35}
\begin{tabular}{c c c c c c c}
\hline
\(\alpha\) & \(\beta\)
& \(R_{\alpha,\beta,q}(x)\)
& \(q=-1\) & \(q=0\) & \(q=1\) & \(q=2\) \\
\hline
\(0\) & \(0\)
&
\(2x-q\)
&
\(\varnothing\)
&
\((0,\frac12)\)
&
\((\frac12,1)\)
&
\(\varnothing\)
\\[0.6em]

\(s\) & \(0\)
&
\(2x-\frac12-q\)
&
\((0,\frac14)\)
&
\((\frac14,\frac34)\)
&
\((\frac34,1)\)
&
\(\varnothing\)
\\[0.6em]

\(0\) & \(s\)
&
\(2x+1-q\)
&
\(\varnothing\)
&
\(\varnothing\)
&
\((0,\frac12)\)
&
\((\frac12,1)\)
\\[0.6em]

\(s\) & \(s\)
&
\(2x+\frac12-q\)
&
\(\varnothing\)
&
\((0,\frac14)\)
&
\((\frac14,\frac34)\)
&
\((\frac34,1)\)
\\
\hline
\end{tabular}
\endgroup
\caption{The entry in the column \(q\) is the branch-validity interval
\(\Omega_{\alpha,\beta,q}\).}
\label{tab:one-dimensional-half-shift-branches}
\end{table}

The {\em branch cuts} for the chart pair \((\beta,\alpha)\) are the target-coordinate points at which
the source coordinate reaches a seam and the integer label \(q\) changes.  Away from these points,
the label is
\begin{equation}
\label{eq:source-target-label}
        Q_{\alpha,\beta}(x)
        :=
        \lfloor 2x+2\beta-\alpha\rfloor,
\end{equation}
and the corresponding residual coordinate is
\[
        R_{\alpha,\beta}(x)
        =
        2(x+\beta)-\alpha-Q_{\alpha,\beta}(x).
\]
The construction developed in this paper does not evaluate the discontinuous map \(Q_{\alpha,\beta}\) globally.  
Instead, it uses the finitely many affine branch formulas and switches between their outputs only where the relevant candidates agree.

For the same-chart branches \(\alpha=\beta\), we have
\[
        R_{\alpha,\alpha,q}(x)
        =
        2x+\alpha-q.
\]
The ordinary chart has a branch cut at \(x=\tfrac12\), whereas the half-shifted chart has branch
cuts at \(x=\tfrac14\) and \(x=\tfrac34\).  Since these cut sets are disjoint, one chart remains
regular wherever the other changes branch, which is the basic geometric reason the two-chart
selector can avoid a global discontinuous digit choice.

\subsection{Infinite chart representatives and branch identities}
\label{subsec:ideal-infinite-chart-mechanism}

We now express the intrinsic primal and adjoint maps in the two lift charts.  The construction in
this subsection uses full infinite coordinate representatives.  The network will later retain only
their finite windows.

Let \(y\in U_\beta\).  For \(W_y\in\mathcal E_y\), denote by
\[
        w_\beta^\infty(y,W_y)\in E
\]
its representative in the chart lift
\(\tilde y_\beta(y)=\beta+x_\beta(y)\).  Similarly, for
\(\Phi_y\in\mathcal E_y^*\), denote its representative in the same lift by
\[
        \psi_\beta^\infty(y,\Phi_y)\in E^*.
\]
If \(y\in U_\beta\cap U_{\beta'}\) and the two lifts satisfy
\[
        \tilde y_{\beta'}(y)
        =
        \tilde y_\beta(y)+\kappa,
        \qquad
        \kappa\in\Z,
\]
then the lift-change rules from \Cref{subsec:residual-fibers-invariant-cascade} give
\begin{equation}
\label{eq:infinite-chart-lift-change}
        w_{\beta'}^\infty(y,W_y)
        =
        C_\kappa w_\beta^\infty(y,W_y),
        \qquad\text{and}\qquad
        \psi_{\beta'}^\infty(y,\Phi_y)
        =
        C_{-\kappa}^*
        \psi_\beta^\infty(y,\Phi_y).
\end{equation}
The two transformations preserve the pairing.

For a compactly supported function \(f:\R\to\R\) and a chart offset
\(\beta\in\mathcal A\), define the extended coordinate profile
\[
        \mathcal G_{\beta,f}(x)
        :=
        \bigl(f(x+\beta+i)\bigr)_{i\in\Z},
        \qquad
        x\in[0,1].
\]
For \(0<x<1\), this agrees with the coordinate representative of the intrinsic section:
\[
        \mathcal G_{\beta,f}(x)
        =
        w_\beta^\infty\bigl([\beta+x],W_f([\beta+x])\bigr).
\]
The values at \(x=0\) and \(x=1\) are the two one-sided coordinate representatives associated
with the seam of the \(\beta\)-chart.

For \(\alpha,\beta\in\mathcal A\) and \(q\in\Z\), write
\(T_{\alpha,\beta,q}^\infty:=T_q^\infty\)
so that
\begin{equation}
\label{eq:infinite-branch-matrix}
        \bigl(T_{\alpha,\beta,q}^\infty w\bigr)_i
        =
        \sum_{\ell\in\Z}
        c_{q+2i-\ell}w_\ell.
\end{equation}
The entries depend only on \(q\); the additional indices record the source and target charts in
which the branch is used.
Let \(y\in U_\beta\), set \(y^+:=d(y)\), and suppose that \(y^+\in U_\alpha\).  
Write
\(x:=x_\beta(y)\) and \(r:=x_\alpha(y^+)\),
and let \(q\) be determined by
\[
        2(x+\beta)=q+\alpha+r.
\]
For every \(W^+\in\mathcal E_{y^+}\), the coordinate form of the intrinsic primal map is
\begin{equation}
\label{eq:infinite-primal-branch-identity}
        w_\beta^\infty
        \bigl(y,\mathcal B_yW^+\bigr)
        =
        T_{\alpha,\beta,q}^\infty
        w_\alpha^\infty(y^+,W^+).
\end{equation}
In particular, for a compactly supported function \(f\), one obtains
\begin{equation}
\label{eq:infinite-chart-cascade-identity}
        \mathcal G_{\beta,Vf}(x)
        =
        T_{\alpha,\beta,q}^\infty
        \mathcal G_{\alpha,f}(r).
\end{equation}
The same formula remains valid at a branch endpoint as an identity between the corresponding
one-sided closed-window representatives.

Indeed, the source--target relation gives
\[
        (Vf)(x+\beta+i)
        =
        \sum_{j\in\Z}
        c_j f(2x+2\beta+2i-j)
        =
        \sum_{j\in\Z}
        c_j f(r+\alpha+q+2i-j).
\]
The substitution \(\ell=q+2i-j\) then yields \eqref{eq:infinite-chart-cascade-identity}.

The corresponding adjoint identity follows by duality.  Let
\(\Phi_y\in\mathcal E_y^*\), and let
\(\Phi_{y^+}:=\mathcal B_y^*\Phi_y\in\mathcal E_{y^+}^*\).
Then its source-chart representative is
\begin{equation}
\label{eq:infinite-adjoint-branch-identity}
        \psi_\alpha^\infty(y^+,\Phi_{y^+})
        =
        \bigl(T_{\alpha,\beta,q}^\infty\bigr)^*
        \psi_\beta^\infty(y,\Phi_y).
\end{equation}
In coordinates, the dual branch map is given by
\begin{equation}
\label{eq:infinite-adjoint-branch-identity-coord}
        \left(
                \bigl(T_{\alpha,\beta,q}^\infty\bigr)^*
                \psi
        \right)_\ell
        =
        \sum_{i\in\Z}
        c_{q+2i-\ell}\psi_i.
\end{equation}

We next record the overlap identity underlying the atlas selector.  Suppose that
\(\beta,\beta'\in\mathcal A\) are both valid target charts at \(y\), and that their lifts satisfy
\[
        \tilde y_{\beta'}(y)
        =
        \tilde y_\beta(y)+\kappa.
\]
Fix a source chart \(\alpha\) at \(y^+=d(y)\), and use the corresponding source lift
\(\tilde y^+ = \alpha+x_\alpha(y^+)\).
If the branch labels associated with the target lifts \(\beta\) and \(\beta'\) are \(q\) and
\(q'\), respectively, then
\[
        q'=q+2\kappa.
\]
The coordinate-change identity \eqref{eq:T-transform} from
\Cref{subsec:residual-fibers-invariant-cascade}, with the source chart and its lift fixed, therefore
becomes
\begin{equation}
\label{eq:infinite-primal-branch-transition}
        T_{\alpha,\beta',q'}^\infty
        =
        C_\kappa T_{\alpha,\beta,q}^\infty.
\end{equation}
Now let \(\Phi_y\in\mathcal E_y^*\), and abbreviate its target-chart representatives by
\[
        \psi_\beta
        :=
        \psi_\beta^\infty(y,\Phi_y),
        \qquad
        \psi_{\beta'}
        :=
        \psi_{\beta'}^\infty(y,\Phi_y).
\]
By \eqref{eq:infinite-chart-lift-change}, one has
\(\psi_{\beta'}=C_{-\kappa}^*\psi_\beta\).  Taking duals in
\eqref{eq:infinite-primal-branch-transition} therefore gives
\begin{equation}
\label{eq:infinite-overlap-agreement}
        \bigl(T_{\alpha,\beta,q}^\infty\bigr)^*
        \psi_\beta
        =
        \bigl(T_{\alpha,\beta',q'}^\infty\bigr)^*
        \psi_{\beta'}.
\end{equation}
Thus two compatible target-chart descriptions produce exactly the same adjoint update when both
are written in the same source chart.

In an ideal infinite atlas step, one could form, for each source chart \(\alpha\), the local
candidates
\[
        Y_{\alpha,\beta}
        :=
        \bigl(T_{\alpha,\beta,q_{\alpha,\beta}}^\infty\bigr)^*
        \psi_\beta,
        \qquad
        \beta\in\mathcal A.
\]
Equation \eqref{eq:infinite-overlap-agreement} shows that all simultaneously valid candidates agree.
Two obstacles prevent a direct network implementation of this infinite construction.  First, the
valid chart and branch labels change discontinuously with the residual point.  Second, the network
can store only finitely many coordinates of each chart representative.  
We address these issues in
\S\ref{sec:compatible-fibers}--\S\ref{sec:adjoint-branch-selectors}  
by introducing padded finite chart windows and CPwL switches supported on trusted agreement regions.

\subsection{The ReLU gate and exact switching}
\label{subsec:relu-gate-switch}

We use one elementary gate throughout the scalar and adjoint atlas constructions.  For \(A>0\),
\(\lambda\in[0,1]\), and \(z\in\R\), define
\begin{equation}
\label{eq:relu-gate}
        \Pi_A(\lambda,z)
        :=
        \ReLU\bigl(z-A(1-\lambda)\bigr)
        -
        \ReLU\bigl(-z-A(1-\lambda)\bigr).
\end{equation}
For vector inputs, the gate is applied componentwise.

\begin{lemma}[Gate identities and contraction]
\label{lem:gate-identities}
For \(A>0\) and \(\lambda\in[0,1]\), the gate satisfies
\[
        \Pi_A(1,z)=z,
        \qquad
        \Pi_A(\lambda,0)=0,
        \qquad
        z\in\R.
\]
Moreover, we have
\begin{equation}
\label{eq:gate-contraction}
        |\Pi_A(\lambda,z)|
        \le |z|,
        \qquad
        \lambda\in[0,1],\quad z\in\R.
\end{equation}
If \(|z|\le A\), then also \(\Pi_A(0,z)=0\).
\end{lemma}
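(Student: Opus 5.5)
The plan is to reduce everything to the elementary soft-thresholding identity. Set \(\theta:=A(1-\lambda)\). Since \(A>0\) and \(\lambda\in[0,1]\), we have \(\theta\ge0\), and the gate takes the form
\[
        \Pi_A(\lambda,z)=\ReLU(z-\theta)-\ReLU(-z-\theta).
\]
First I would show that this equals the soft-thresholding map \(S_\theta(z)=\operatorname{sign}(z)\max(|z|-\theta,0)\). This follows from a three-case split.

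\begin{enumerate}[label=(\alph*),leftmargin=2em]
    \item If \(z\ge\theta\), then \(-z-\theta\le-2\theta\le0\), so the second ReLU vanishes and \(\Pi_A(\lambda,z)=z-\theta\).
    \item If \(z\le-\theta\), then symmetrically the first ReLU vanishes and \(\Pi_A(\lambda,z)=z+\theta\).
    \item If \(|z|<\theta\), then both arguments are negative and \(\Pi_A(\lambda,z)=0\).
\end{enumerate}

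The hypothesis \(\theta\ge0\) is what makes these cases exhaustive and disjoint (except at the boundary \(|z|=\theta\), where the formulas agree).

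The individual claims then follow directly.

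\begin{itemize}[leftmargin=2em]
    \item For \(\lambda=1\) we get \(\theta=0\). Then \(\Pi_A(1,z)=\ReLU(z)-\ReLU(-z)=z\), which is the standard decomposition of the identity.
    \item For \(z=0\), both terms equal \(\ReLU(-\theta)=0\), so \(\Pi_A(\lambda,0)=0\).
    \item For the contraction bound \eqref{eq:gate-contraction}, in case (a) we have \(0\le z-\theta\le z=|z|\). Case (b) is symmetric, and case (c) gives \(0\). Hence \(|\Pi_A(\lambda,z)|\le|z|\) for all \(z\).
    \item For \(\lambda=0\) we have \(\theta=A\). If \(|z|\le A\), then both \(z-A\le0\) and \(-z-A\le0\), so both ReLUs vanish and \(\Pi_A(0,z)=0\).
\end{itemize}

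For vector inputs the gate acts componentwise, so every statement transfers coordinatewise. This includes the contraction bound in the max norm, which is presumably how it will be used later.

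There is no real obstacle here. The only point requiring care is to use \(\theta\ge0\), which comes from \(\lambda\le1\) and \(A>0\), so that the two ReLU terms are never simultaneously active.
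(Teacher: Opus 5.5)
Your proof is correct and follows essentially the same route as the paper: set \(\tau=A(1-\lambda)\ge0\), reduce the gate to the soft-thresholding formula by a three-case split, and read off the contraction bound and the vanishing of \(\Pi_A(0,z)\) for \(|z|\le A\). Like the paper, you get the two identities \(\Pi_A(1,z)=z\) and \(\Pi_A(\lambda,0)=0\) directly from \(\ReLU(z)-\ReLU(-z)=z\).
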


\begin{proof}
The first two identities follow directly from
\(\ReLU(z)-\ReLU(-z)=z\).  Writing
\(\tau:=A(1-\lambda)\ge0\), one has
\[
        \Pi_A(\lambda,z)
        =
        \begin{cases}
        z-\tau,  & z>\tau,\\
        0,       & |z|\le\tau,\\
        z+\tau,  & z<-\tau.
        \end{cases}
\]
This gives \eqref{eq:gate-contraction}.  When \(\lambda=0\), one has \(\tau=A\), so the gate
vanishes for \(|z|\le A\).
\end{proof}

For \(Y_0,Y_1\in\R^N\), \(\theta\in[0,1]\), and \(A>0\), define
\begin{equation}
\label{eq:exact-switch}
        \operatorname{Sw}_A(\theta;Y_0,Y_1)
        :=
        Y_0+
        \Pi_A\bigl(1-\theta,Y_1-Y_0\bigr),
\end{equation}
where the gate is applied componentwise.

\begin{lemma}[Exact switching on agreement regions]
\label{lem:exact-switch}
Assume that
\[
        \|Y_1-Y_0\|_\infty\le A
\]
and that \(Y_0=Y_1\) whenever \(0<\theta<1\).  Then
\[
        \operatorname{Sw}_A(\theta;Y_0,Y_1)
        =
        \begin{cases}
        Y_0,     & \theta=1,\\
        Y_1,     & \theta=0,\\
        Y_0=Y_1, & 0<\theta<1.
        \end{cases}
\]
\end{lemma}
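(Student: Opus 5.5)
The plan is to reduce everything to \Cref{lem:gate-identities} applied componentwise, with \(\lambda:=1-\theta\in[0,1]\) and \(z:=Y_1-Y_0\). By \eqref{eq:exact-switch}, each coordinate of \(\operatorname{Sw}_A(\theta;Y_0,Y_1)\) equals \((Y_0)_k+\Pi_A\bigl(1-\theta,(Y_1)_k-(Y_0)_k\bigr)\). It therefore suffices to evaluate the scalar gate in the three regimes of \(\theta\), one coordinate at a time.

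The three cases are as follows.
\begin{enumerate}[label=(\roman*),leftmargin=2em]
\item If \(\theta=1\), then \(\lambda=0\). The hypothesis \(\|Y_1-Y_0\|_\infty\le A\) gives \(|z_k|\le A\) for every \(k\), so the last assertion of \Cref{lem:gate-identities} yields \(\Pi_A(0,z_k)=0\). The switch then returns \(Y_0\).
\item If \(\theta=0\), then \(\lambda=1\), and the identity \(\Pi_A(1,z)=z\) gives \(Y_0+(Y_1-Y_0)=Y_1\).
\item If \(0<\theta<1\), the agreement hypothesis gives \(z=0\). The identity \(\Pi_A(\lambda,0)=0\), valid for all \(\lambda\in[0,1]\), then gives the output \(Y_0\), which equals \(Y_1\).
\end{enumerate}

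There is no real obstacle. The only point needing care is case (i): it is the one place where the bound \(\|Y_1-Y_0\|_\infty\le A\) is used, and it relies on the sup-norm bound controlling each coordinate separately, since the gate acts componentwise.
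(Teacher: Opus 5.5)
Your proposal is correct and follows the paper's proof essentially verbatim: the same three-case split on \(\theta\), using \(\Pi_A(0,z)=0\) for \(|z|\le A\) when \(\theta=1\), \(\Pi_A(1,z)=z\) when \(\theta=0\), and \(Y_1-Y_0=0\) together with \(\Pi_A(\lambda,0)=0\) on the fractional region. Your remark that the sup-norm bound is what controls each coordinate of the componentwise gate is a correct and harmless added detail.
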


\begin{proof}
For \(\theta=1\), the scale assumption and
\(\Pi_A(0,Y_1-Y_0)=0\) give the value \(Y_0\).  For \(\theta=0\), the identity
\(\Pi_A(1,z)=z\) gives \(Y_1\).  If \(0<\theta<1\), then \(Y_1-Y_0=0\), and the switch returns
their common value.
\end{proof}

\begin{remark}
\label{rem:switch-not-multiplication}
The switch does not implement multiplication by the variable factor \(\theta\).  Its exactness
comes from restricting fractional switching to regions where the candidate values already agree.
The scale \(A\) is required only to control their difference on the states reached by the
construction.
\end{remark}

\section{Scalar atlas readout}
\label{sec:scalar-readout}

This section constructs an exact scalar readout on the residual circle.  The residual orbit is
transported exactly on a fixed polygonal loop, and the value of a CPwL circle profile \(h\) is
recovered from the ordinary and half-shifted chart coordinates by switching only where the two
local readouts agree.  Applied to \(y=d^n([t])\), the construction produces the terminal factor
\(h(d^n([t]))\).  It requires only the circle compatibility condition \(h(0)=h(1)\), with no
positivity or seam-vanishing assumption.

\subsection{A polygonal loop model for the residual circle}
\label{subsec:residual-circle-loop-model}

Fix a simple polygonal loop \(\Gamma\subset\R^2\) and a CPwL parametrization
\(\emb:[0,1]\to\Gamma\) such that \(\emb(0)=\emb(1)\) and \(\emb\) is injective on
\([0,1)\).  
It descends to a
homeomorphism
\[
        \emb:\T\longrightarrow\Gamma.
\]
The following result was established in \cite{loop} and is recalled here for convenience.

\begin{lemma}[Exact loop controller]
\label{lem:loop-controller}
There exists a fixed CPwL map \(D:\R^2\to\R^2\) such that
\begin{equation}
\label{eq:loop-controller-intertwining}
        D(\emb(y))
        =
        \emb(d(y)),
        \qquad
        y\in\T.
\end{equation}
Consequently, if
\[
        z_0(t):=\emb([t]),
        \qquad
        z_{m+1}(t):=D(z_m(t)),
\]
then
\begin{equation}
\label{eq:exact-loop-orbit}
        z_m(t)
        =
        \emb(d^m([t]))
        =
        \emb([2^mt]),
        \qquad
        m\ge0.
\end{equation}
For every \(n\ge1\), the map \(t\mapsto z_n(t)\) is realized on \([0,1]\) by a ReLU network
of fixed width and depth \(O(n)\).
\end{lemma}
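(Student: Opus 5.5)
The plan is to build \(D\) first on the loop \(\Gamma\) alone, then extend it to all of \(\R^2\) as a CPwL map, and finally iterate. Let \(0=p_0<p_1<\dots<p_K=1\) be the breakpoints of \(\emb\). Set \(\rho(x):=2x-\lfloor 2x\rfloor\) on \([0,1)\). Then \(\emb\circ\rho\) is continuous on \([0,1]\): it is continuous at \(x=\tfrac12\) because \(\emb(1)=\emb(0)\), and its value at \(x=1\) also equals \(\emb(0)\). It is piecewise affine with breakpoints contained in \(\{p_k/2\}\cup\{(1+p_k)/2\}\).

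Let \(P\) be the common refinement of \(\{p_k\}\) and these points. Each subinterval of \(P\) is mapped affinely and injectively by \(\emb\) onto a segment of \(\Gamma\). We therefore subdivide \(\Gamma\) at the points \(\emb(P)\), which makes \(\Gamma\) a polygonal \(1\)-complex. On each of its edges we define
\[
D\bigl(\emb(x)\bigr):=\emb(\rho(x)).
\]
This is affine along each edge, since it is the composite of the inverse of an affine parametrization with an affine map. It is well defined at the vertex \(\emb(0)=\emb(1)\) because \(\emb(\rho(0))=\emb(0)=\emb(\rho(1^-))\). Since \(\emb\) descends to a homeomorphism \(\T\to\Gamma\) and \(\rho\) descends to \(d\), the resulting map \(D|_\Gamma\) is continuous and satisfies \eqref{eq:loop-controller-intertwining}.

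The step I expect to be the main (though standard) obstacle is extending \(D|_\Gamma\) to a CPwL map on \(\R^2\). To do this, enclose \(\Gamma\) in a large triangle \(\Delta\). The region \(\Delta\setminus\Gamma\) consists of the interior of the simple polygon \(\Gamma\) and the annular region between \(\Gamma\) and \(\partial\Delta\). Triangulate both polygonal regions, using only the vertices of \(\Gamma\) (as subdivided above) and of \(\Delta\); this is a constrained triangulation, which always exists for simple polygons and for polygonal regions with holes. Assign the prescribed values of \(D\) at the vertices of \(\Gamma\) and arbitrary values (say \(0\)) at the vertices of \(\Delta\), and interpolate linearly on each triangle. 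This yields a continuous, piecewise affine map on \(\Delta\) that agrees with \(D|_\Gamma\) on every edge of \(\Gamma\).

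Outside \(\Delta\), precompose with a CPwL retraction \(\R^2\to\Delta\), for example a piecewise-affine radial projection onto the triangle, or simply use any global CPwL extension as allowed by \Cref{rem:cpwl-basic}(ii). The result is a fixed CPwL map \(D:\R^2\to\R^2\) satisfying \eqref{eq:loop-controller-intertwining}.

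The orbit identity \eqref{eq:exact-loop-orbit} then follows by induction on \(m\). If \(z_m(t)=\emb(d^m([t]))\), then
\[
z_{m+1}(t)=D\bigl(\emb(d^m[t])\bigr)=\emb\bigl(d^{m+1}[t]\bigr),
\]
and \(d^m([t])=[2^mt]\).

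For the realization, note that on \([0,1]\) the map \(t\mapsto\emb([t])\) coincides with \(\emb(t)\), because \(\emb(0)=\emb(1)\). Hence \(z_0\) is a scalar-input CPwL map with two outputs, realized by one hidden layer according to \Cref{rem:cpwl-basic}(i). The map \(D\) is a fixed CPwL module and so has a fixed-width, fixed-depth ReLU realization by \Cref{rem:cpwl-basic}(ii). Composing \(z_0\) with \(n\) copies of \(D\), using \Cref{rem:cpwl-basic}(v), gives width independent of \(n\) and depth \(O(n)\), which realizes \(z_n\) on \([0,1]\).
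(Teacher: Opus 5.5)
Your proof is correct. The paper does not prove this lemma itself; it recalls it from \cite{loop}, where the loop controller is established in the broader homogeneous $M$-ary setting.

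\textbf{Comparison.} You instead give a self-contained construction for the binary case. Its one essential input is the seam identity $\emb(0)=\emb(1)$, which makes $\emb\circ\rho$ continuous at $x=\tfrac12$ and $x=1$. Refining the breakpoints by $p_k/2$ and $(1+p_k)/2$ then makes $D|_\Gamma=\emb\circ\rho\circ\emb^{-1}$ affine on every edge of the subdivided loop. A constrained triangulation of $\Delta$, with the edges of $\Gamma$ as constraints, extends this map piecewise affinely to $\Delta$. The citation buys brevity and generality. Your route buys transparency: it shows directly that $D$ depends only on the parametrization and the doubling map, as the paper remarks after the lemma. It also shows that the values of $D$ off $\Gamma$ are immaterial for the realization, since $D(\Gamma)\subset\Gamma$ and the orbit $z_m(t)$ never leaves $\Gamma$.

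\textbf{One small correction.} A radial projection onto a triangle from an interior point $c$ is not piecewise affine outside the triangle. On each exterior cone it has the form $x\mapsto c+(x-c)/\ell(x-c)$ with $\ell$ linear, which is projective rather than affine, so it does not give a CPwL retraction. Use instead the Euclidean nearest-point projection onto $\Delta$. It is affine on each of seven polyhedral regions (the interior, three edge strips, three vertex cones) and continuous, hence CPwL. Alternatively, extend affinely over a polyhedral subdivision of $\R^2\setminus\operatorname{int}\Delta$. Your fallback to an arbitrary global CPwL extension also suffices.
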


The loop model and the controller \(D\) depend only on the binary residual dynamics and the fixed
polygonal parametrization.  They are independent of the refinement mask and of the seed.

\subsection{Scalar chart profiles}
\label{subsec:scalar-chart-profiles}

Let \(h:[0,1]\to\R\) be CPwL and satisfy \(h(0)=h(1)\).
Then \(h\) descends to a well-defined CPwL function on \(\T\), which we denote by the same symbol.

For each \(\beta\in\mathcal A=\{0,s\}\), define the extended chart profile
\begin{equation}
\label{eq:scalar-chart-profile}
        h_\beta(x)
        :=
        h([\beta+x]),
        \qquad
        0\le x\le1.
\end{equation}
For \(0<x<1\), this is the expression of \(h\) in the \(\beta\)-chart.  The endpoint values
correspond to the two one-sided values at the chart seam and agree because
\(h\) is a well-defined function on \(\T\).
More concretely, the ordinary profile is \(h_0(x)=h(x)\), and the shifted profile at
\(s=\frac12\) is
\[
        h_s(x)
        =
        \begin{cases}
        h(s+x),   & 0\le x\le1-s,\\
        h(s+x-1), & 1-s\le x\le1.
        \end{cases}
\]
At the joining point \(x=1-s\), the two expressions give \(h(1)\) and \(h(0)\), respectively, so
they agree.  Hence \(h_0\) and \(h_s\) are CPwL on \([0,1]\).  Moreover,
\[
        h_\beta(0)=h_\beta(1),
        \qquad
        \beta\in\mathcal A,
\]
because the two endpoints represent the same seam point of the corresponding chart.

\subsection{Safe arcs and local chart readouts}
\label{subsec:chart-readouts-from-loop}

Fix once and for all a margin parameter
\(0<\mar<\frac14\).
For each \(\beta\in\mathcal A\), define the {\em safe arc}
\begin{equation}
\label{eq:safe-chart-arc}
        K_\beta
        :=
        \left\{
                y\in U_\beta:
                \mar\le x_\beta(y)\le1-\mar
        \right\}.
\end{equation}
Thus \(K_\beta\) is the closed arc obtained by removing a margin \(\mar\) on both sides of the
\(\beta\)-chart seam.  Since \(s=\frac12\) and \(\mar<\frac14\), the two safe arcs cover the circle:
\begin{equation}
\label{eq:safe-arcs-cover}
        K_0\cup K_s=\T.
\end{equation}

For each \(\beta\in\mathcal A\), choose a fixed global CPwL map
\(\rho_\beta:\R^2\longrightarrow[0,1]\)
such that
\begin{equation}
\label{eq:chart-coordinate-readout}
        \rho_\beta(\emb(y))
        =
        x_\beta(y),
        \qquad
        y\in K_\beta.
\end{equation}
Such a map exists because \(x_\beta\circ\emb^{-1}\) is CPwL on the polygonal arc
\(\emb(K_\beta)\).  After a finite subdivision of \(\Gamma\), it may be extended affinely over a
finite triangulation and then fixed as a global CPwL map.  Its values away from
\(\emb(K_\beta)\) are immaterial.

Define the two scalar candidates by
\begin{equation}
\label{eq:scalar-chart-candidates}
        H_\beta(z)
        :=
        h_\beta\bigl(\rho_\beta(z)\bigr),
        \qquad
        \beta\in\mathcal A.
\end{equation}
They are global CPwL functions.  On the corresponding safe arcs, they satisfy
\begin{equation}
\label{eq:Hbeta-exact-on-Kbeta}
        y\in K_\beta
        \quad\Longrightarrow\quad
        H_\beta(\emb(y))
        =
        h_\beta(x_\beta(y))
        =
        h(y).
\end{equation}
In particular, we have
\begin{equation}
\label{eq:scalar-candidate-overlap-agreement}
        H_0(\emb(y))
        =
        H_s(\emb(y))
        =
        h(y),
        \qquad
        y\in K_0\cap K_s.
\end{equation}

\subsection{The scalar atlas selector}
\label{subsec:scalar-switch}

We now combine the two chart candidates by switching only on their agreement region.  Choose a
fixed global CPwL map
\(\vartheta:\R^2\longrightarrow[0,1]\)
such that, for \(y\in\T\),
\begin{equation}
\label{eq:scalar-selector-properties}
\begin{aligned}
        \vartheta(\emb(y))=1
        &\quad\Longrightarrow\quad y\in K_0,\\
        \vartheta(\emb(y))=0
        &\quad\Longrightarrow\quad y\in K_s,\\
        0<\vartheta(\emb(y))<1
        &\quad\Longrightarrow\quad y\in K_0\cap K_s.
\end{aligned}
\end{equation}
Equivalently, \(\vartheta=1\) on \(K_0\setminus K_s\), \(\vartheta=0\) on
\(K_s\setminus K_0\), and all transition regions lie inside \(K_0\cap K_s\).

Such a selector exists because \(K_0\) and \(K_s\) are closed polygonal arcs covering \(\T\),
with nonempty overlap.  One first defines a CPwL transition on the loop, with its fractional bands
contained in the two overlap components, and then fixes a global CPwL extension to \(\R^2\).

Set
\begin{equation}
\label{eq:scalar-readout-gate-scale}
        A_h
        :=
        \max\Big\{
                1,\,
                \max_{z\in\Gamma}
                |H_s(z)-H_0(z)|
        \Big\},
\end{equation}
and define
\begin{equation}
\label{eq:scalar-two-grid-readout}
        H(z)
        :=
        \operatorname{Sw}_{A_h}
        \bigl(
                \vartheta(z);
                H_0(z),H_s(z)
        \bigr).
\end{equation}
The maximum in \eqref{eq:scalar-readout-gate-scale} is finite because \(\Gamma\) is compact and
the two candidates are continuous.  In particular, \(A_h>0\), and it dominates the gated
difference on every loop state.

\begin{proposition}[Scalar atlas readout]
\label{prop:two-grid-scalar-readout}
Let \(h:[0,1]\to\R\) be CPwL and satisfy \(h(0)=h(1)\).  Then the global CPwL map \(H\)
defined by \eqref{eq:scalar-two-grid-readout} satisfies
\begin{equation}
\label{eq:scalar-atlas-readout-exactness}
        H(\emb(y))
        =
        h(y),
        \qquad
        y\in\T.
\end{equation}
Thus we have
\begin{equation}
\label{eq:iterated-scalar-atlas-readout}
        H(z_n(t))
        =
        h([2^nt]),
        \qquad
        t\in[0,1],\quad n\ge0.
\end{equation}
Moreover, for \(n\ge1\), the map \(t\mapsto H(z_n(t))\) is realized on \([0,1]\) by a ReLU network of
fixed width and depth \(O(n)\).
\end{proposition}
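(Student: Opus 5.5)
The plan is to combine the three properties \eqref{eq:scalar-selector-properties} of \(\vartheta\) with the local exactness \eqref{eq:Hbeta-exact-on-Kbeta} and the exact switching lemma, \Cref{lem:exact-switch}. Then we compose with the loop controller of \Cref{lem:loop-controller}.

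\emph{Step 1: pointwise exactness on the loop.} Fix \(y\in\T\) and set \(z=\emb(y)\in\Gamma\), \(Y_0:=H_0(z)\), \(Y_1:=H_s(z)\), \(\theta:=\vartheta(z)\in[0,1]\). By the definition \eqref{eq:scalar-readout-gate-scale} we have \(|Y_1-Y_0|\le A_h\), since \(z\in\Gamma\). Lemma~\ref{lem:exact-switch} additionally needs \(Y_0=Y_1\) whenever \(0<\theta<1\). This is exactly where the selector property is used: \(0<\theta<1\) forces \(y\in K_0\cap K_s\), and then \eqref{eq:scalar-candidate-overlap-agreement} gives \(Y_0=Y_1=h(y)\). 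The lemma then leaves three cases.
\begin{itemize}
\item If \(\theta=1\), then \(H(z)=Y_0\). The selector gives \(y\in K_0\), so \(Y_0=h(y)\) by \eqref{eq:Hbeta-exact-on-Kbeta}.
\item If \(\theta=0\), then \(H(z)=Y_1\). The selector gives \(y\in K_s\), so \(Y_1=h(y)\) by the same identity.
\item If \(0<\theta<1\), the switch returns the common value \(h(y)\).
\end{itemize}
This proves \eqref{eq:scalar-atlas-readout-exactness}.

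The lemma is applied only on \(\Gamma\), pointwise. So no bound on \(H_s-H_0\) off the loop is needed, and that is why \(A_h\) was defined as a maximum over \(\Gamma\) only.

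\emph{Step 2: iterated identity.} By \eqref{eq:exact-loop-orbit}, \(z_n(t)=\emb([2^nt])\), and \([2^nt]=d^n([t])\in\T\). Substituting into Step~1 gives \eqref{eq:iterated-scalar-atlas-readout} for all \(n\ge0\) and \(t\in[0,1]\).

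\emph{Step 3: network realization.} The maps \(\rho_\beta\), \(h_\beta\) and \(\vartheta\) are fixed global CPwL maps independent of \(n\), so \(H_0\), \(H_s\) and \(\vartheta\) are fixed CPwL modules. The switch \(\Sw_{A_h}\) is an explicit composition of affine maps and two ReLUs per component, with a fixed scale \(A_h\). Hence \(H\) is a fixed CPwL map \(\R^2\to\R\). By Remark~\ref{rem:cpwl-basic}(ii) it has an exact ReLU realization of fixed width and depth; more directly, one can stack the fixed realizations of the candidates and of \(\vartheta\) in parallel (Remark~\ref{rem:cpwl-basic}(iv)) and append the gate layer. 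Lemma~\ref{lem:loop-controller} realizes \(t\mapsto z_n(t)\) on \([0,1]\) with fixed width and depth \(O(n)\). Composing the two via Remark~\ref{rem:cpwl-basic}(v) gives the stated complexity.

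There is no real obstacle. The only point needing care is Step~1: the selector conditions must be used as implications from the value of \(\vartheta\) to membership in the safe arcs. Only in that form do both hypotheses of Lemma~\ref{lem:exact-switch} hold and each endpoint case land in the arc where its candidate is exact.
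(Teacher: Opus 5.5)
Your proof is correct and follows the paper's argument essentially verbatim. You do the three-case analysis on the value of \(\vartheta(\emb(y))\), using the selector implications, the local exactness of \(H_0,H_s\) on the safe arcs, and the exact-switch lemma (with \(A_h\) dominating \(|H_s-H_0|\) on \(\Gamma\)); you then substitute the exact loop orbit \(z_n(t)=\emb([2^nt])\) and compose the depth-\(O(n)\) loop network with the fixed CPwL map \(H\).
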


\begin{proof}
Fix \(y\in\T\).  If \(\vartheta(\emb(y))=1\), then
\eqref{eq:scalar-selector-properties} gives \(y\in K_0\), and the switch returns
\[
        H_0(\emb(y))=h(y).
\]
If \(\vartheta(\emb(y))=0\), then \(y\in K_s\), and the switch returns
\(H_s(\emb(y))=h(y)\).

Finally, if \(0<\vartheta(\emb(y))<1\), then \(y\in K_0\cap K_s\).  By
\eqref{eq:scalar-candidate-overlap-agreement}, the two candidates already agree:
\[
        H_0(\emb(y))
        =
        H_s(\emb(y))
        =
        h(y).
\]
The exact-switch lemma therefore returns their common value.  This proves
\eqref{eq:scalar-atlas-readout-exactness}.

Equation \eqref{eq:iterated-scalar-atlas-readout} follows from the exact loop identity
\(z_n(t)=\emb([2^nt])\).
The loop recursion has fixed width and depth \(O(n)\), while \(H\) is a fixed CPwL terminal map,
so their composition has the same asymptotic architecture.
\end{proof}

\section{Finite chart windows and local branch maps}
\label{sec:compatible-fibers}

By the adjoint pairing identity \eqref{eq:invariant-terminal-pairing} from \Cref{subsec:residual-fibers-invariant-cascade}, the refinement
cascade can be evaluated by propagating an intrinsic residual covector forward along the residual
orbit.  The network cannot store its infinite chart representatives, so it records instead finite
coordinate windows in the ordinary and half-shifted charts.  These two windows describe the same
intrinsic covector on their overlap and are related by the corresponding lattice-index shift.

This section develops the finite algebra needed to propagate those windows.  We choose a common
padded index interval \(I\), define the canonical finite chart windows, and prove that the local
adjoint updates are exact and agree on trusted overlaps.  These identities are the algebraic basis
of the atlas adjoint selector constructed in
\Cref{sec:adjoint-branch-selectors}.

\subsection{A padded finite index set}
\label{ss:padded-index-set}

For \(\alpha,\beta\in\mathcal A\), recall from
\Cref{subsec:residual-circle-charts} that a local source--target branch of the doubling map has the
affine form
\[
        R_{\alpha,\beta,q}(x)=2x+2\beta-\alpha-q.
\]
Away from its cuts, the branch label is given by
\(q=Q_{\alpha,\beta}(x)=\lfloor 2x+2\beta-\alpha\rfloor\).

The genuine branch labels associated with the canonical chart coordinates
\(\alpha,\beta\in\mathcal A\) and \(x\in[0,1)\) form the finite set
\[
        \mathcal Q
        :=
        \bigl\{
                Q_{\alpha,\beta}(x):
                \alpha,\beta\in\mathcal A,\ x\in[0,1)
        \bigr\}
        =
        \{-1,0,1,2\}.
\]
In the lift-change calculation underlying
\eqref{eq:infinite-overlap-agreement}, the same target point is described using two lifts that
differ by an integer \(\kappa\).  The corresponding branch labels then satisfy
\(q'=q+2\kappa\).  To accommodate these shifted labels, as well as their finite-window
counterparts below, we use the enlarged finite set
\[
        \mathcal Q^\sharp
        :=
        \mathcal Q+\{-2,0,2\}
        =
        \{-3,-2,\ldots,4\},
\]
since the two-chart lift changes have \(\kappa\in\{-1,0,1\}\).

We now choose the finite coordinate window stored by the network.  We index the unit cells meeting
the interior of the preserved support window by
\[
        I_{\rm phys}:=\{0,1,\ldots,L-1\}.
\]
The adjoint branch formulas may read additional target coordinates, and a change between the
ordinary and half-shifted charts may reindex those coordinates by one.  We therefore enlarge
\(I_{\rm phys}\) to an interval \(I\subset\Z\) such that every target index contributing to an
\(I\)-coordinate of a relevant adjoint update in
\eqref{eq:infinite-adjoint-branch-identity-coord} lies at least one lattice site inside \(I\).

Let \(\mathcal J=\{j\in\Z:c_j\neq0\}\) and set \(j_-=\min\mathcal J\) and
\(j_+=\max\mathcal J\).  Define
\begin{equation}
\label{eq:padded-index-choice}
        i_-:=\min\{-1,j_- -6\},
        \qquad
        i_+:=\max\{L+1,j_+ +5\},
        \qquad
        I:=\{i_-,i_-+1,\ldots,i_+\}.
\end{equation}
This in particular ensures
\[
        I_{\rm phys}\subset I,
        \qquad
        \{-1,L,L+1\}\subset I.
\]
Thus \(I\) contains all physical unit-cell coordinates together with the guard coordinates required
by the adjoint branch updates and the chart shifts.

Let us verify the required interior property.  Suppose that \(q\in\mathcal Q^\sharp\),
\(j\in\mathcal J\), and \(\ell\in I\), and that \(j=q+2i-\ell\) for some \(i\in\Z\).
Equivalently, the target coordinate \(i\) contributes to the source coordinate \(\ell\) in the
adjoint branch formula.  Since \(-3\le q\le4\), the choice
\eqref{eq:padded-index-choice} gives
\[
\begin{aligned}
        i
        &=\frac{j-q+\ell}{2}
        \ge \frac{j_- -4+i_-}{2}
        \ge i_-+1,\\
        i
        &=\frac{j-q+\ell}{2}
        \le \frac{j_+ +3+i_+}{2}
        \le i_+-1.
\end{aligned}
\]
Thus \(i\) belongs to the interior interval
\(I^\circ:=\{i_-+1,\ldots,i_+-1\}\), and consequently
\(i-1,i,i+1\in I\).

Recall that the primal branch matrix maps the source fiber over \(d(y)\) to the target fiber over
\(y\).  Its adjoint therefore maps a target covector over \(y\) to a source covector over \(d(y)\).
The preceding estimate gives the finite-window property needed below.  For every
\(q\in\mathcal Q^\sharp\), every infinite target covector
\(\psi^\infty\in E^*\), and every \(\ell\in I\), one has
\begin{equation}
\label{eq:padded-adjoint-window}
        \bigl(
                (T_{\alpha,\beta,q}^{\infty})^*
                \psi^\infty
        \bigr)_\ell
        =
        \sum_{i\in I}
        c_{q+2i-\ell}\psi_i^\infty .
\end{equation}
Indeed, every index contributing to the corresponding infinite sum lies in \(I^\circ\).
Hence the \(I\)-window of the source covector produced by the adjoint update is determined
entirely by the input \(I\)-window, while the neighboring coordinates needed for chart
reindexing by \(\pm1\) remain available.

We write \(E_I:=\R^I\).  Let
\(\jmath_I:E_I\to E\) denote the canonical inclusion by zero extension, and let
\[
        p_I:E^*\to E_I^*,
        \qquad
        p_I\psi:=\psi\circ\jmath_I,
\]
be the corresponding restriction map.  Under the standard identification
\(E_I^*\cong\R^I\), the covector \(p_I\psi\) is the coordinate tuple
\(p_I\psi=(\psi_i)_{i\in I}\).
We call this tuple the \(I\)-window of the infinite coordinate representative \(\psi\).

For \(\kappa\in\{-1,0,1\}\), define the truncated finite shift
\(C_\kappa^I:E_I\to E_I\) by
\begin{equation}
\label{eq:CI-kappa}
        (C_\kappa^I w)_i
        =
        \begin{cases}
        w_{i+\kappa}, & i+\kappa\in I,\\
        0,            & i+\kappa\notin I.
        \end{cases}
\end{equation}
Its transpose \((C_\kappa^I)^\trans\) acts on \(E_I^*\), and \(C_0^I=\operatorname{Id}\).  
These truncated shifts will be used only on coordinates for which both the original
and shifted indices lie in \(I\).  The canonical chart windows themselves are defined in the next
subsection by forming the appropriate infinite chart representative first and then applying the
restriction \(p_I\).

\subsection{Chart lifts and canonical covector windows}
\label{ss:canonical-chart-representatives}

Let \(y\in\T\), and denote its ordinary representative by
\(\bar y\in[0,1)\).  We extend each chart coordinate to a global half-open convention by choosing
the unique \(x_\beta(y)\in[0,1)\) such that
\[
        y=[\beta+x_\beta(y)],
        \qquad
        \beta\in\mathcal A.
\]
There is then a unique \(\kappa_\beta(y)\in\{0,1\}\) satisfying
\begin{equation}
\label{eq:chart-lift}
        \beta+x_\beta(y)
        =
        \bar y+\kappa_\beta(y).
\end{equation}
For the ordinary chart, \(x_0(y)=\bar y\) and \(\kappa_0(y)=0\).  
For the half-shifted chart, we have
\[
        x_s(y)
        =
        \begin{cases}
        \bar y-s,   & s\le\bar y<1,\\
        \bar y-s+1, & 0\le\bar y<s,
        \end{cases}
        \qquad
        \kappa_s(y)
        =
        \begin{cases}
        0, & s\le\bar y<1,\\
        1, & 0\le\bar y<s.
        \end{cases}
\]
At a chart seam, this convention selects one of its two one-sided lifts.  Exactness assertions will
be imposed only on the corresponding safe arcs, except when seam values are treated explicitly.

Let \(\Phi_y\in\mathcal E_y^*\).  As in
\Cref{subsec:ideal-infinite-chart-mechanism}, denote by
\(\psi_\beta^\infty(y,\Phi_y)\in E^*\) its infinite representative in the chosen lift
\(\beta+x_\beta(y)\).  Relative to the ordinary lift, the dual lift-change rule gives
\begin{equation}
\label{eq:canonical-covector-rep-shift}
        \psi_\beta^\infty(y,\Phi_y)
        =
        C_{-\kappa_\beta(y)}^*
        \psi_0^\infty(y,\Phi_y),
\end{equation}
or in coordinates:
\[
        \bigl(\psi_\beta^\infty(y,\Phi_y)\bigr)_i
        =
        \bigl(\psi_0^\infty(y,\Phi_y)\bigr)_{i+\kappa_\beta(y)},
        \qquad
        i\in\Z.
\]
The canonical finite \(\beta\)-chart window of \(\Phi_y\) is
\begin{equation}
\label{eq:canonical-covector-rep}
        \Rep_\beta^I(y,\Phi_y)
        :=
        p_I\psi_\beta^\infty(y,\Phi_y)
        \in E_I^*.
\end{equation}
Thus the intrinsic lift change is performed in the infinite coordinate space before restriction to
\(I\).  Finite shifts will be used only for coordinates that remain inside the padded window.

\begin{lemma}[Finite chart transition]
\label{lem:finite-chart-transition}
Let \(\beta,\beta'\in\mathcal A\), and suppose that their chosen lifts of \(y\) satisfy
\[
        x_{\beta'}(y)+\beta'
        =
        x_\beta(y)+\beta+\kappa,
        \qquad
        \kappa\in\{-1,0,1\}.
\]
Then we have
\begin{equation}
\label{eq:finite-chart-transition}
        \bigl(\Rep_\beta^I(y,\Phi_y)\bigr)_i
        =
        \bigl(\Rep_{\beta'}^I(y,\Phi_y)\bigr)_{i-\kappa}
\end{equation}
for every \(\Phi_y\in\mathcal E_y^*\), 
whenever \(i,i-\kappa\in I\).
\end{lemma}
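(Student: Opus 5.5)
The plan is to reduce the statement to the infinite lift-change rule \eqref{eq:infinite-chart-lift-change}, specifically its dual form, and then observe that restricting to the $I$-window only discards coordinates. The canonical windows are defined in \eqref{eq:canonical-covector-rep} by first forming the infinite representative in the chosen lift and then applying $p_I$. For every $i\in I$ this gives
\[
\bigl(\Rep_\beta^I(y,\Phi_y)\bigr)_i=\bigl(\psi_\beta^\infty(y,\Phi_y)\bigr)_i,
\]
and likewise for $\beta'$. It therefore suffices to prove the infinite identity
\[
\bigl(\psi_\beta^\infty(y,\Phi_y)\bigr)_i=\bigl(\psi_{\beta'}^\infty(y,\Phi_y)\bigr)_{i-\kappa},\qquad i\in\Z.
\]
We then restrict to those $i$ with $i,i-\kappa\in I$. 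For such $i$ both sides of \eqref{eq:finite-chart-transition} are genuine window coordinates, and no truncated shift $C^I_\kappa$ is involved.

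To prove the infinite identity, note that the two chosen lifts differ by the integer $\kappa$: $\tilde y_{\beta'}=\tilde y_\beta+\kappa$. By the dual lift-change rule \eqref{eq:covector-transform}, equivalently the second half of \eqref{eq:infinite-chart-lift-change}, we have $\psi_{\beta'}^\infty=C_{-\kappa}^*\psi_\beta^\infty$. The coordinate formula $(C_\kappa^*\psi)_i=\psi_{i-\kappa}$ then gives $(\psi_{\beta'}^\infty)_j=(\psi_\beta^\infty)_{j+\kappa}$. Setting $j=i-\kappa$ yields exactly the claimed identity.

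As an alternative route, one can use \eqref{eq:canonical-covector-rep-shift}: write $\psi_\beta^\infty$ and $\psi_{\beta'}^\infty$ as $C^*_{-\kappa_\beta}\psi_0^\infty$ and $C^*_{-\kappa_{\beta'}}\psi_0^\infty$. Subtracting the relations \eqref{eq:chart-lift} gives $\kappa=\kappa_{\beta'}(y)-\kappa_\beta(y)$, and the same index bookkeeping applies.

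The only point requiring care is the sign convention in the dual shift. A sign error here would produce $i+\kappa$ instead of $i-\kappa$. I would sanity-check the convention with the pairing invariance $\langle C_{-\kappa}^*\psi, C_\kappa w\rangle=\langle\psi,w\rangle$. Beyond that, the statement is pure bookkeeping. The hypothesis $\kappa\in\{-1,0,1\}$ plays no role in the infinite identity; it matters only in that it makes the condition $i,i-\kappa\in I$ meaningful relative to the padding chosen in \eqref{eq:padded-index-choice}.
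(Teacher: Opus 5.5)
Your proof is correct and is essentially the paper's argument. The paper takes your ``alternative route'': it writes both windows through the ordinary representative $\psi_0^\infty$ via \eqref{eq:canonical-covector-rep-shift} and uses $\kappa=\kappa_{\beta'}(y)-\kappa_\beta(y)$, while your main route applies the dual lift-change rule \eqref{eq:covector-transform} directly between the two chosen lifts, which is the same index bookkeeping with the correct sign, $(\psi_{\beta'}^\infty)_{i-\kappa}=(\psi_\beta^\infty)_i$.
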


\begin{proof}
The two lift parameters satisfy
\(\kappa = \kappa_{\beta'}(y)-\kappa_\beta(y)\).
Writing \(\psi_0^\infty=\psi_0^\infty(y,\Phi_y)\), the coordinate form of
\eqref{eq:canonical-covector-rep-shift} gives
\[
        \bigl(\Rep_{\beta'}^I(y,\Phi_y)\bigr)_{i-\kappa}
        =
        (\psi_0^\infty)_{i-\kappa+\kappa_{\beta'}(y)}
        =
        (\psi_0^\infty)_{i+\kappa_\beta(y)}
        =
        \bigl(\Rep_\beta^I(y,\Phi_y)\bigr)_i ,
\]
completing the proof.
\end{proof}

\begin{remark}
\label{rem:canonical-not-arbitrary}
A safe-exact atlas state consists of finite chart windows of one intrinsic covector.  Where both
charts are safe, \Cref{lem:finite-chart-transition} supplies their compatibility; outside a chart's
safe region, no correctness is required of its stored component.
\end{remark}

\subsection{Finite branch matrices and local adjoint exactness}
\label{ss:finite-branch}

We now restrict the infinite adjoint branch updates to the padded window \(I\).  For
\(\alpha,\beta\in\mathcal A\) and \(q\in\Z\), define
\(T_{\alpha,\beta,q}^I:E_I\to E_I\) by
\begin{equation}
\label{eq:finite-branch-matrix}
        \bigl(T_{\alpha,\beta,q}^I\bigr)_{i\ell}
        :=
        c_{q+2i-\ell},
        \qquad
        i,\ell\in I.
\end{equation}
The entries depend only on \(q\); the chart labels record the local source--target branch in which
the matrix is used.

For \(q\in\mathcal Q^\sharp\), the padding choice from
\Cref{ss:padded-index-set} gives
\begin{equation}
\label{eq:finite-adjoint-window-identity}
        p_I
        \bigl(T_{\alpha,\beta,q}^{\infty}\bigr)^*
        \psi^\infty
        =
        \bigl(T_{\alpha,\beta,q}^I\bigr)^\trans
        p_I\psi^\infty,
        \qquad
        \psi^\infty\in E^*.
\end{equation}
Thus every coordinate in the output \(I\)-window of the infinite adjoint update is determined
exactly by the input \(I\)-window.

\begin{lemma}[Local adjoint exactness for canonical windows]
\label{lem:canonical-local-exactness}
Let \(y\in K_\beta\), set \(y^+:=d(y)\), and assume that \(y^+\in K_\alpha\).  For
\(\Phi_y\in\mathcal E_y^*\), define
\[
        \Phi_{y^+}
        :=
        \mathcal B_y^*\Phi_y.
\]
Let \(x:=x_\beta(y)\), \(r:=x_\alpha(y^+)\), and let \(q\in\Z\) be determined by
\(2(x+\beta)=q+\alpha+r\).
Then we have
\begin{equation}
\label{eq:canonical-local-exactness}
        \bigl(T_{\alpha,\beta,q}^I\bigr)^\trans
        \Rep_\beta^I(y,\Phi_y)
        =
        \Rep_\alpha^I(y^+,\Phi_{y^+}).
\end{equation}
\end{lemma}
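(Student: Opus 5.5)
The plan is to reduce the identity \eqref{eq:canonical-local-exactness} to the infinite adjoint branch identity \eqref{eq:infinite-adjoint-branch-identity}, and then pass to $I$-windows using the padding identity \eqref{eq:finite-adjoint-window-identity}. The only genuine work is to check that the hypotheses of those two results hold for the canonical half-open lifts and for the label $q$ in the statement.

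\emph{Step 1: valid charts and lifts.} Since $y\in K_\beta$, the canonical coordinate $x=x_\beta(y)$ lies in $[\mar,1-\mar]\subset(0,1)$, so $y\in U_\beta$. The half-open convention of \Cref{ss:canonical-chart-representatives} then agrees with the genuine chart coordinate, and the chosen lift is $\beta+x$. The same reasoning applied to $y^+\in K_\alpha$ gives $r\in(0,1)$, $y^+\in U_\alpha$, and source lift $\alpha+r$. These two lifts satisfy the relation $2(x+\beta)=q+\alpha+r$, which is exactly \eqref{eq:invariant-lift-relation} with $\tilde y=\beta+x$ and $\tilde y^+=\alpha+r$. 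Therefore \eqref{eq:infinite-adjoint-branch-identity} applies verbatim:
\[
        \psi_\alpha^\infty(y^+,\Phi_{y^+})
        =
        \bigl(T_{\alpha,\beta,q}^\infty\bigr)^*
        \psi_\beta^\infty(y,\Phi_y).
\]

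\emph{Step 2: $q\in\mathcal Q^\sharp$.} This is the step I expect to need the most care, because the padding identity \eqref{eq:finite-adjoint-window-identity} was established only for labels in $\mathcal Q^\sharp$. From $q=2x+2\beta-\alpha-r$ with $x,r\in(0,1)$ and $\beta,\alpha\in\{0,\tfrac12\}$, we get $q\in(-\tfrac32,\tfrac52)$. Hence $q\in\{-1,0,1,2\}=\mathcal Q\subset\mathcal Q^\sharp$. Equivalently, since $r\in(0,1)$, $q$ equals the floor $Q_{\alpha,\beta}(x)$ away from the cuts, consistent with \Cref{tab:one-dimensional-half-shift-branches}.

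\emph{Step 3: restriction to the window.} Apply $p_I$ to both sides of the identity in Step 1. The left side becomes $\Rep_\alpha^I(y^+,\Phi_{y^+})$ by the definition \eqref{eq:canonical-covector-rep}. For the right side, \eqref{eq:finite-adjoint-window-identity} (justified by the interior estimate for $I$ with $q\in\mathcal Q^\sharp$) gives
\[
        p_I\bigl(T_{\alpha,\beta,q}^\infty\bigr)^*\psi_\beta^\infty
        =
        \bigl(T_{\alpha,\beta,q}^I\bigr)^\trans p_I\psi_\beta^\infty
        =
        \bigl(T_{\alpha,\beta,q}^I\bigr)^\trans\Rep_\beta^I(y,\Phi_y).
\]
Equating the two sides yields \eqref{eq:canonical-local-exactness}.
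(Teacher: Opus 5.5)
Your proposal is correct and follows essentially the same route as the paper's proof. The paper also invokes the infinite adjoint branch identity, notes $q\in\mathcal Q\subset\mathcal Q^\sharp$, and restricts to $I$-windows via the padding identity.

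One small arithmetic slip in your Step 2: the real-valued range of $q=2x+2\beta-\alpha-r$ is $(-\tfrac32,3)$, with the upper end approached when $\alpha=0$ and $\beta=s$, not $(-\tfrac32,\tfrac52)$. The integers in either interval are $\{-1,0,1,2\}$, so your conclusion is unaffected.
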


\begin{proof}
With
\(\psi_\beta^\infty := \psi_\beta^\infty(y,\Phi_y)\) and
\(\psi_\alpha^{+,\infty} := \psi_\alpha^\infty(y^+,\Phi_{y^+})\),
the identity \eqref{eq:infinite-adjoint-branch-identity} becomes
\[
        \psi_\alpha^{+,\infty}
        =
        \bigl(T_{\alpha,\beta,q}^\infty\bigr)^*
        \psi_\beta^\infty.
\]
Since \(x,r\in[0,1)\), the branch label \(q\) belongs to
\(\mathcal Q\subset\mathcal Q^\sharp\).  Applying
\eqref{eq:finite-adjoint-window-identity} gives
\[
        \bigl(T_{\alpha,\beta,q}^I\bigr)^\trans
        \Rep_\beta^I(y,\Phi_y)
        =
        \bigl(T_{\alpha,\beta,q}^I\bigr)^\trans
        p_I\psi_\beta^\infty
        =
        p_I
        \bigl(T_{\alpha,\beta,q}^\infty\bigr)^*
        \psi_\beta^\infty
        =
        p_I\psi_\alpha^{+,\infty}
        =
        \Rep_\alpha^I(y^+,\Phi_{y^+}) ,
\]
establishing the proof.
\end{proof}

Hence the finite branch transpose propagates the canonical chart windows exactly whenever the
target and source charts are safe.  Equivalently, one may regard the adjoint evolution as taking
place entirely in the infinite-dimensional intrinsic fibers: the finite branch matrices merely
compute the required \(I\)-windows of that evolution.

\subsection{Finite overlap agreement}
\label{ss:finite-overlap-agreement}

We now establish the finite identity that supplies the agreement regions for the atlas selector.
When two compatible target-chart windows describe the same intrinsic covector, their local adjoint
updates agree after both candidates have been written in a common source chart.

\begin{lemma}[Finite padded overlap agreement]
\label{lem:finite-padded-overlap-agreement}
Fix a source chart \(\alpha\in\mathcal A\), and let
\(\beta,\beta'\in\mathcal A\) be two target charts.  Suppose that their chosen lifts of the same
target circle point satisfy
\begin{equation}
\label{eq:target-chart-lift-difference}
        x_{\beta'}+\beta'
        =
        x_\beta+\beta+\kappa,
        \qquad
        \kappa\in\{-1,0,1\}.
\end{equation}
Assume that both target descriptions use the same source coordinate \(x_\alpha\), so that
\[
        2(x_\beta+\beta)=q+\alpha+x_\alpha,
        \qquad
        2(x_{\beta'}+\beta')=q'+\alpha+x_\alpha,
\]
for some \(q,q'\in\mathcal Q^\sharp\).  
Then we have
\begin{equation}
\label{eq:q-prime-shift}
        q'=q+2\kappa.
\end{equation}

Let \(\psi_\beta,\psi_{\beta'}\in E_I^*\) satisfy the finite chart compatibility relation
\begin{equation}
\label{eq:finite-covector-compatibility}
        (\psi_\beta)_i
        =
        (\psi_{\beta'})_{i-\kappa}
\end{equation}
whenever \(i,i-\kappa\in I\).  Then their adjoint branch updates into the common source chart
\(\alpha\) agree:
\begin{equation}
\label{eq:finite-overlap-agreement}
        (T_{\alpha,\beta,q}^I)^\trans\psi_\beta
        =
        (T_{\alpha,\beta',q'}^I)^\trans\psi_{\beta'}.
\end{equation}
\end{lemma}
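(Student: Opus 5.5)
The plan is to verify \eqref{eq:q-prime-shift} first, and then check \eqref{eq:finite-overlap-agreement} coordinatewise, using the padding property of \(I\) from \Cref{ss:padded-index-set}.

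\emph{The label shift.} Subtract the two source--target relations. This gives \(2\bigl((x_{\beta'}+\beta')-(x_\beta+\beta)\bigr)=q'-q\). Inserting \eqref{eq:target-chart-lift-difference}, the left side equals \(2\kappa\), so \(q'=q+2\kappa\).

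\emph{Agreement of the updates.} Fix a source index \(\ell\in I\). By \eqref{eq:finite-branch-matrix}, the right side of \eqref{eq:finite-overlap-agreement} has \(\ell\)-coordinate
\[
        \sum_{i'\in I}c_{q'+2i'-\ell}(\psi_{\beta'})_{i'}
        =
        \sum_{i'\in I}c_{q+2(i'+\kappa)-\ell}(\psi_{\beta'})_{i'}.
\]
Substitute \(i=i'+\kappa\). The summand becomes \(c_{q+2i-\ell}(\psi_{\beta'})_{i-\kappa}\), and \(i\) now ranges over \(I+\kappa\). The left side has \(\ell\)-coordinate \(\sum_{i\in I}c_{q+2i-\ell}(\psi_\beta)_i\). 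So it suffices to show two things. First, every \(i\) carrying a nonzero coefficient \(c_{q+2i-\ell}\) satisfies \(i\in I\) and \(i-\kappa\in I\). Second, there are no nonzero terms with \(i\in I+\kappa\) but \(i\notin I\), and none with \(i\in I\) but \(i-\kappa\notin I\).

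Both follow from the interior estimate of \Cref{ss:padded-index-set}. Since \(q\in\mathcal Q^\sharp\) and \(\ell\in I\), any \(i\) with \(q+2i-\ell\in\mathcal J\) lies in \(I^\circ\). Hence \(i-\kappa\in I\) for \(\kappa\in\{-1,0,1\}\). Likewise, on the right side every contributing index \(i'\) lies in \(I^\circ\), because \(q'\in\mathcal Q^\sharp\), and then \(i=i'+\kappa\in I\). So on both sides the nonzero terms are indexed by the same set of \(i\), contained in \(I^\circ\). On that set the compatibility relation \eqref{eq:finite-covector-compatibility} gives \((\psi_{\beta'})_{i-\kappa}=(\psi_\beta)_i\). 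The two sums therefore agree term by term.

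\emph{Main obstacle.} The substantive step is this index bookkeeping at the boundary of the window. One must use that both \(q\) and \(q'\) belong to \(\mathcal Q^\sharp\), which is a hypothesis of the lemma. Without it, the interior estimate would apply to only one of the two sums, and a boundary term could appear on one side with no matching term on the other. Everything else is the finite analogue of the duality argument behind \eqref{eq:infinite-overlap-agreement}. That argument could serve as an alternative proof: extend \(\psi_\beta\) to any infinite covector, set \(\psi_{\beta'}\) by the lift change, and apply \eqref{eq:finite-adjoint-window-identity}. The direct coordinate computation avoids having to extend arbitrary finite windows compatibly, so I would present that one.
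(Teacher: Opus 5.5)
Your proof is correct and follows essentially the paper's argument. The label shift comes from comparing the two source--target relations. Agreement then follows from a coordinatewise reindexing \(i\mapsto i-\kappa\), in which the padding estimate places every contributing index in \(I^\circ\) so that the compatibility relation applies term by term; the paper phrases this as a bijection \(A_\ell\to A'_\ell\).

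One small correction to your closing remark: because \(q'=q+2\kappa\), the sets of contributing indices over \(\Z\) are exact translates of each other. So the interior estimate for either one of \(q,q'\) alone already places both sets, and their \(\kappa\)-shifts, inside \(I\). Having both labels in \(\mathcal Q^\sharp\) is therefore convenient but not essential, and no unmatched boundary term can arise.
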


\begin{proof}
Doubling \eqref{eq:target-chart-lift-difference} and using the first source--target relation gives
\[
        2(x_{\beta'}+\beta')
        =
        (q+2\kappa)+\alpha+x_\alpha.
\]
Comparison with the second relation proves \eqref{eq:q-prime-shift}.

Fix \(\ell\in I\), and introduce the sets of indices that contribute to the two adjoint sums:
\[
        A_\ell
        =
        \{i\in I:q+2i-\ell\in\mathcal J\},
        \qquad\text{and}\qquad
        A'_\ell
        =
        \{r\in I:q'+2r-\ell\in\mathcal J\}.
\]
If \(i\in A_\ell\), the padding property from \Cref{ss:padded-index-set} gives
\(i-1,i,i+1\in I\), and therefore \(i-\kappa\in I\).  Moreover, we have
\[
        q'+2(i-\kappa)-\ell
        =
        q+2i-\ell
        \in\mathcal J,
\]
so \(i-\kappa\in A'_\ell\).

Conversely, if \(r\in A'_\ell\), the same padding property gives \(r+\kappa\in I\), and
\[
        q+2(r+\kappa)-\ell
        =
        q'+2r-\ell
        \in\mathcal J.
\]
Hence \(r+\kappa\in A_\ell\).  Thus \(i\mapsto i-\kappa\) is a bijection from
\(A_\ell\) onto \(A'_\ell\).

Using the compatibility relation \eqref{eq:finite-covector-compatibility}, we now obtain
\[
\begin{aligned}
\bigl((T_{\alpha,\beta,q}^I)^\trans\psi_\beta\bigr)_\ell
&=
\sum_{i\in A_\ell}
        c_{q+2i-\ell}(\psi_\beta)_i
=
\sum_{i\in A_\ell}
        c_{q+2i-\ell}(\psi_{\beta'})_{i-\kappa}\\
&=
\sum_{r\in A'_\ell}
        c_{q'+2r-\ell}(\psi_{\beta'})_r
=
\bigl((T_{\alpha,\beta',q'}^I)^\trans\psi_{\beta'}\bigr)_\ell.
\end{aligned}
\]
Since \(\ell\in I\) was arbitrary, \eqref{eq:finite-overlap-agreement} follows.
\end{proof}

The compatibility condition leaves at most one endpoint coordinate of each finite window
unconstrained when \(\kappa=\pm1\).  This causes no ambiguity in the preceding lemma: by the
padding property, every coordinate used by either adjoint branch update has its shifted counterpart
inside \(I\).  The unmatched endpoint values are therefore inactive in both updates.

\begin{corollary}[Canonical-window overlap agreement]
\label{cor:canonical-overlap-agreement}
Let \(y\in K_\beta\cap K_{\beta'}\), and suppose that
\(y^+=d(y)\in K_\alpha\).  Let \(\Phi_y\in\mathcal E_y^*\), and define
\(\Phi_{y^+}=\mathcal B_y^*\Phi_y\).  Set
\[
        \psi_\beta=\Rep_\beta^I(y,\Phi_y),
        \qquad
        \psi_{\beta'}=\Rep_{\beta'}^I(y,\Phi_y).
\]
Let \(q\) and \(q'\) be the branch labels determined by
\[
        2(x_\beta(y)+\beta)
        =
        q+\alpha+x_\alpha(y^+)
\qquad\text{and}\qquad
        2(x_{\beta'}(y)+\beta')
        =
        q'+\alpha+x_\alpha(y^+).
\]
Then we have
\begin{equation}
\label{eq:canonical-overlap-agreement}
        (T_{\alpha,\beta,q}^I)^\trans\psi_\beta
        =
        (T_{\alpha,\beta',q'}^I)^\trans\psi_{\beta'}
        =
        \Rep_\alpha^I(y^+,\Phi_{y^+}).
\end{equation}
\end{corollary}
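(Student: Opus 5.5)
The corollary combines two results already established: the local exactness lemma (\Cref{lem:canonical-local-exactness}) applied in each target chart separately, and the finite overlap lemma (\Cref{lem:finite-padded-overlap-agreement}). Either route alone gives the first equality in \eqref{eq:canonical-overlap-agreement}. The plan is to prove the second equality directly for each chart, so that the first follows by transitivity, and to note the consistency with the overlap lemma as a check.

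First, since \(y\in K_\beta\) and \(y^+\in K_\alpha\), \Cref{lem:canonical-local-exactness} applies with the label \(q\) determined by \(2(x_\beta(y)+\beta)=q+\alpha+x_\alpha(y^+)\). It yields \((T_{\alpha,\beta,q}^I)^\trans\psi_\beta=\Rep_\alpha^I(y^+,\Phi_{y^+})\). The hypotheses are symmetric in \(\beta\) and \(\beta'\): we have \(y\in K_{\beta'}\) with the same source chart and source coordinate. The same lemma therefore gives \((T_{\alpha,\beta',q'}^I)^\trans\psi_{\beta'}=\Rep_\alpha^I(y^+,\Phi_{y^+})\), and both equalities in \eqref{eq:canonical-overlap-agreement} follow.

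As a consistency check, and to show that this is a genuine overlap statement, I would also derive the first equality from \Cref{lem:finite-padded-overlap-agreement}. The canonical lifts satisfy \(\beta'+x_{\beta'}(y)-\beta-x_\beta(y)=\kappa_{\beta'}(y)-\kappa_\beta(y)=:\kappa\in\{-1,0,1\}\) by \eqref{eq:chart-lift}. \Cref{lem:finite-chart-transition} then gives the compatibility relation \eqref{eq:finite-covector-compatibility} for \(\psi_\beta,\psi_{\beta'}\). It remains to check that \(q,q'\in\mathcal Q^\sharp\). Since \(x_\beta(y),x_\alpha(y^+)\in[0,1)\), the label \(q\) is a genuine label in \(\mathcal Q\). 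The same holds for \(q'\), and in any case \(q'=q+2\kappa\in\mathcal Q^\sharp\).

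The only point requiring care is this membership of the labels in \(\mathcal Q\). That requires the canonical coordinates used in \Cref{lem:canonical-local-exactness} to be the same half-open ones appearing in the corollary. This is immediate because the safe arcs avoid the seams, so \(x_\beta(y)\in[\mar,1-\mar]\). No further obstacle is expected.
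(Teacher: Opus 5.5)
Your proof is correct, but your primary argument differs from the paper's. You apply \Cref{lem:canonical-local-exactness} in each target chart separately and obtain the first equality in \eqref{eq:canonical-overlap-agreement} by transitivity.

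The paper instead derives the first equality from \Cref{lem:finite-chart-transition} together with \Cref{lem:finite-padded-overlap-agreement}. It then invokes local exactness only once, to identify the common value with \(\Rep_\alpha^I(y^+,\Phi_{y^+})\). Your ``consistency check'' is exactly the paper's proof.

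Your route is shorter. It shows that, for canonical windows with a safe source chart, the overlap lemma is logically unnecessary for this corollary. The paper itself argues your way in the overlap case of \Cref{prop:safe-exact-compatible-propagation}, where both candidates are identified with \(\Rep_\alpha^I(y^+,\Phi_{y^+})\) by canonical exactness.

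What the paper's route buys is structural. It exhibits the agreement as an instance of a purely finite algebraic identity. That identity holds for any pair of windows satisfying \eqref{eq:finite-covector-compatibility}, with no intrinsic covector and no use of the source-safety hypothesis \(y^+\in K_\alpha\). This is the overlap mechanism the paper presents as load-bearing for the atlas selector, even though this corollary does not strictly need it.
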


\begin{proof}
The canonical target windows satisfy
\eqref{eq:finite-covector-compatibility} by
\Cref{lem:finite-chart-transition}.  Hence
\Cref{lem:finite-padded-overlap-agreement} gives equality of the two branch candidates.
Applying \Cref{lem:canonical-local-exactness} to either target chart identifies their common value
with \(\Rep_\alpha^I(y^+,\Phi_{y^+})\).
\end{proof}

This agreement is the exact switching condition used by the atlas adjoint selector below.

\section{The atlas adjoint selector}
\label{sec:adjoint-branch-selectors}

We now construct the one-step atlas adjoint selector.  Its input is a redundant finite state
\(\Psi=(\psi_0,\psi_s)\in E_I^*\oplus E_I^*\), with one coordinate window for each chart.
Each component is required to represent the underlying intrinsic covector only where its chart is
safe.

More precisely, for \(y\in\T\), we say that \(\Psi\) is \emph{safe-exact over \(y\)} if there exists
an intrinsic covector \(\Phi_y\in\mathcal E_y^*\) such that, for every
\(\beta\in\mathcal A\),
\begin{equation}
\label{eq:safe-exact-state-condition}
        y\in K_\beta
        \quad\Longrightarrow\quad
        \psi_\beta=\Rep_\beta^I(y,\Phi_y).
\end{equation}
When \(y\in K_0\cap K_s\), both components are therefore canonical finite windows of the same
intrinsic covector and satisfy the chart-transition identity of
\Cref{lem:finite-chart-transition}.  No condition is imposed on \(\psi_\beta\) when
\(y\notin K_\beta\).

The selector propagates such a state from \(y\) to \(y^+=d(y)\).  For each source chart
\(\alpha\in\mathcal A\), it forms one local adjoint candidate from each target chart at \(y\).
Where the target-chart selector takes a plateau value \(0\) or \(1\), it selects the corresponding
candidate directly.  Its fractional region lies inside \(K_0\cap K_s\), where both target windows
are canonical representatives of \(\Phi_y\).  By
\Cref{cor:canonical-overlap-agreement}, the two local updates into chart \(\alpha\) then agree and,
whenever \(y^+\in K_\alpha\), their common value is
\[
        \Rep_\alpha^I\bigl(y^+,\mathcal B_y^*\Phi_y\bigr).
\]
Thus the ReLU switch preserves the intrinsic adjoint update on every trusted overlap.

\subsection{Local CPwL adjoint update modules}
\label{ss:local-cpwl-adjoint-modules}

We now construct the local network modules that implement the finite adjoint branch update
\(\psi\mapsto(T_{\alpha,\beta,q}^I)^\trans\psi\)
from the loop state, without evaluating the discontinuous branch label \(q\) globally.  For each
\(\alpha,\beta\in\mathcal A\) and each gate scale \(A>0\), the module
\(\mathcal U_{\alpha,\beta,A}\) combines the finitely many fixed branch matrices using CPwL
plateau gates.  On every trusted source--target branch region, it returns the corresponding exact
adjoint update.  The scale \(A\) will later be chosen uniformly for all gates occurring in the
\(n\)-step recursion.

For fixed \(\alpha\) and \(\beta\), define the {\em trusted region}
\[
        \Gamma_{\alpha,\beta}
        :=
        \{\emb(y):y\in K_\beta,\ d(y)\in K_\alpha\}
        \subset\Gamma.
\]
This is a finite union of closed polygonal arcs, possibly including degenerate components.  On
each connected component, the target coordinate \(x_\beta(y)\) and the source coordinate
\(x_\alpha(d(y))\) are continuous.  The branch label \(q\) determined by
\begin{equation}
\label{eq:trusted-branch-label}
        2\bigl(x_\beta(y)+\beta\bigr)
        =
        q+\alpha+x_\alpha(d(y))
\end{equation}
is therefore a continuous integer-valued function and hence is constant on that component.

\begin{lemma}[Local CPwL update modules]
\label{lem:local-cpwl-update-modules}
Fix \(\alpha,\beta\in\mathcal A\) and \(A>0\).  There exists a globally defined CPwL map
\[
        \mathcal U_{\alpha,\beta,A}:
        \R^2\times E_I^*\to E_I^*
\]
with the following property.  Let \(z=\emb(y)\in\Gamma_{\alpha,\beta}\), and let \(q\) be the
branch label on the trusted component containing \(z\).  Then
\begin{equation}
\label{eq:local-module-exact-branch}
        \mathcal U_{\alpha,\beta,A}(z,\psi)
        =
        (T_{\alpha,\beta,q}^I)^\trans\psi
\end{equation}
whenever \(\psi\in E_I^*\) satisfies
\[
        \|L_\nu\psi\|_\infty\le A
\]
for every finite branch map \(L_\nu\) used in the construction.

In particular, let \(\Phi_y\in\mathcal E_y^*\), set
\(\Phi_{d(y)}=\mathcal B_y^*\Phi_y\), and suppose that
\(\psi=\Rep_\beta^I(y,\Phi_y)\).  Under the same gate bounds, one has
\begin{equation}
\label{eq:local-module-canonical-exactness}
        \mathcal U_{\alpha,\beta,A}
        \bigl(\emb(y),\Rep_\beta^I(y,\Phi_y)\bigr)
        =
        \Rep_\alpha^I\bigl(d(y),\Phi_{d(y)}\bigr).
\end{equation}
No exactness is asserted outside \(\Gamma_{\alpha,\beta}\).
\end{lemma}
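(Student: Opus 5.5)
We will build \(\mathcal U_{\alpha,\beta,A}\) as a sum of gated branch updates, one for each branch label that actually occurs on a trusted component. First we analyze the combinatorics of \(\Gamma_{\alpha,\beta}\). It is the image under \(\emb\) of \(K_\beta\cap d^{-1}(K_\alpha)\). Both \(K_\beta\) and \(d^{-1}(K_\alpha)\) are finite unions of closed arcs with dyadic-type endpoints, so their intersection has finitely many connected components \(\Gamma^{(1)},\dots,\Gamma^{(M)}\), some possibly single points. As noted before the lemma, the label \(q\) determined by \eqref{eq:trusted-branch-label} is constant on each component; call it \(q_\nu\). Since \(x_\beta(y),x_\alpha(d(y))\in[0,1)\), every \(q_\nu\) lies in \(\mathcal Q\). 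Set \(L_\nu:=(T_{\alpha,\beta,q_\nu}^I)^\trans\).

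These components are pairwise disjoint compact subsets of \(\Gamma\). We can therefore pick fixed CPwL indicator maps \(\lambda_\nu:\R^2\to[0,1]\) with \(\lambda_\nu=1\) on \(\Gamma^{(\nu)}\) and \(\lambda_\nu=0\) on \(\Gamma^{(\mu)}\) for \(\mu\neq\nu\). To do this, first define \(\lambda_\nu\) on the polygonal loop \(\Gamma\), using plateaus on the components and linear ramps inside the gaps between them, which have positive length. Then fix a global CPwL extension as in \Cref{rem:cpwl-basic}(ii). This is the only place where care is needed: the plateaus must be exact on whole components, including degenerate ones, and the transitions must be confined to the complement of \(\Gamma_{\alpha,\beta}\). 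Nothing is asserted there.

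Now define
\[
        \mathcal U_{\alpha,\beta,A}(z,\psi):=\sum_{\nu=1}^M \Pi_A\bigl(\lambda_\nu(z),L_\nu\psi\bigr),
\]
with the gate \eqref{eq:relu-gate} applied componentwise. This map is globally CPwL, since \(\Pi_A\) is a fixed composition of affine maps and \(\ReLU\), and \(\lambda_\nu\) and \(\psi\mapsto L_\nu\psi\) are CPwL. Its arguments \(\lambda_\nu(z)\in[0,1]\) are admissible. Take \(z=\emb(y)\in\Gamma^{(\nu)}\) and \(\psi\) with \(\|L_\mu\psi\|_\infty\le A\) for all \(\mu\). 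By \Cref{lem:gate-identities}, the \(\nu\)-th term equals \(\Pi_A(1,L_\nu\psi)=L_\nu\psi\). Each other term is \(\Pi_A(0,L_\mu\psi)=0\), using the bound \(|L_\mu\psi|\le A\) componentwise. This proves \eqref{eq:local-module-exact-branch}, with \(q=q_\nu\).

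For the canonical statement, \(z=\emb(y)\in\Gamma_{\alpha,\beta}\) means that \(y\in K_\beta\) and \(d(y)\in K_\alpha\). Then \Cref{lem:canonical-local-exactness} gives \((T_{\alpha,\beta,q}^I)^\trans\Rep_\beta^I(y,\Phi_y)=\Rep_\alpha^I(d(y),\Phi_{d(y)})\), where \(q\) is exactly the label from \eqref{eq:trusted-branch-label}. Combining this with \eqref{eq:local-module-exact-branch} yields \eqref{eq:local-module-canonical-exactness}.

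The main subtlety is at chart seams. There the half-open convention of \Cref{ss:canonical-chart-representatives} might make \(q\) jump. However, \(K_\beta\) and \(K_\alpha\) stay a distance \(\mar\) away from the seams, so on trusted components both coordinates are continuous and interior. The label is therefore genuinely constant, and the component decomposition is legitimate.
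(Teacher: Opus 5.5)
Your proposal is correct and follows the paper's proof essentially step for step. You split \(\Gamma_{\alpha,\beta}\) into finitely many disjoint components with constant labels \(q_\nu\), choose CPwL plateau functions separating them, set \(\mathcal U_{\alpha,\beta,A}(z,\psi)=\sum_\nu \Pi_A\bigl(\lambda_\nu(z),L_\nu\psi\bigr)\), and conclude via the gate identities and \Cref{lem:canonical-local-exactness}. The one detail the paper states explicitly that you leave implicit is clipping the global CPwL extensions of the plateau functions to \([0,1]\); you assume this when you declare \(\lambda_\nu:\R^2\to[0,1]\).
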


\begin{proof}
Write the connected components of \(\Gamma_{\alpha,\beta}\) as
\(\Gamma_1,\ldots,\Gamma_N\), and let \(q_\nu\) denote the branch label on \(\Gamma_\nu\).
Set
\[
        L_\nu=(T_{\alpha,\beta,q_\nu}^I)^\trans,
        \qquad \nu=1,\ldots,N.
\]
Distinct trusted components are disjoint compact subsets of the polygonal loop and are separated
by complementary arcs.  After subdividing \(\Gamma\) at their endpoints, we may therefore choose
CPwL plateau functions \(\eta_\nu:\Gamma\to[0,1]\) satisfying
\[
        \eta_\nu=1\ \text{on }\Gamma_\nu,
        \qquad
        \eta_\nu=0\ \text{on }\Gamma_\mu
        \quad(\mu\ne\nu).
\]
On each complementary arc, choose any CPwL interpolation between the prescribed endpoint
values.  Extend these functions to global CPwL maps on \(\R^2\), clipping the extensions to
\([0,1]\) if necessary.

Define the local module componentwise by
\begin{equation}
\label{eq:local-module-plateau-formula}
        \mathcal U_{\alpha,\beta,A}(z,\psi)
        :=
        \sum_{\nu=1}^N
        \Pi_A\bigl(\eta_\nu(z),L_\nu\psi\bigr).
\end{equation}
This is a globally defined CPwL map on \(\R^2\times E_I^*\).

Suppose that \(z\in\Gamma_j\).  Then \(\eta_j(z)=1\), while
\(\eta_\nu(z)=0\) for \(\nu\ne j\).  Under the stated bounds, the gate identities give
\[
        \Pi_A(1,L_j\psi)=L_j\psi,
        \qquad
        \Pi_A(0,L_\nu\psi)=0
        \quad(\nu\ne j).
\]
Thus we have
\[
        \mathcal U_{\alpha,\beta,A}(z,\psi)
        =
        L_j\psi
        =
        (T_{\alpha,\beta,q_j}^I)^\trans\psi,
\]
which proves \eqref{eq:local-module-exact-branch}.

For the canonical statement, take
\(\psi=\Rep_\beta^I(y,\Phi_y)\).  Since \(y\in K_\beta\),
\(d(y)\in K_\alpha\), and \(q_j\) is the corresponding local branch label,
\Cref{lem:canonical-local-exactness} yields
\[
        (T_{\alpha,\beta,q_j}^I)^\trans
        \Rep_\beta^I(y,\Phi_y)
        =
        \Rep_\alpha^I
        \bigl(d(y),\mathcal B_y^*\Phi_y\bigr).
\]
Combining this identity with
\eqref{eq:local-module-exact-branch} proves
\eqref{eq:local-module-canonical-exactness}.
\end{proof}

\subsection{One-step atlas adjoint update}
\label{ss:atlas-adjoint-selector}

Fix a gate scale \(A>0\), and let
\(\vartheta:\R^2\to[0,1]\) be the chart selector from
\Cref{subsec:scalar-switch}.  On the loop, the values
\(\vartheta=1\) and \(\vartheta=0\) select the ordinary and half-shifted target charts,
respectively, while fractional values occur only in \(K_0\cap K_s\).

For each source chart \(\alpha\), the two target-chart components produce candidates in the same
finite coordinate space \(E_I^*\).  On the fractional region of \(\vartheta\), both target charts
are safe, so their compatible windows yield identical candidates by
\Cref{cor:canonical-overlap-agreement}.  They may therefore be combined by the exact ReLU
switch.

More precisely, given \(z\in\R^2\) and
\(\Psi=(\psi_0,\psi_s)\in E_I^*\oplus E_I^*\), define
\[
        Y_{\alpha,0}
        :=
        \mathcal U_{\alpha,0,A}(z,\psi_0),
        \qquad
        Y_{\alpha,s}
        :=
        \mathcal U_{\alpha,s,A}(z,\psi_s),
        \qquad
        \alpha\in\mathcal A.
\]
The updated atlas state
\(\mathcal S_A(z,\Psi)\in E_I^*\oplus E_I^*\) is given componentwise by
\begin{equation}
\label{eq:atlas-selector-component}
        \bigl(\mathcal S_A(z,\Psi)\bigr)_\alpha
        :=
        \Sw_A\bigl(
                \vartheta(z);
                Y_{\alpha,0},Y_{\alpha,s}
        \bigr),
        \qquad
        \alpha\in\mathcal A.
\end{equation}
Since the local update modules, the chart selector, and the exact switch are CPwL, the resulting
map
\[
        \mathcal S_A:
        \R^2\times(E_I^*\oplus E_I^*)
        \longrightarrow
        E_I^*\oplus E_I^*
\]
is globally CPwL.

\begin{proposition}[One-step propagation of safe-exact states]
\label{prop:safe-exact-compatible-propagation}
Let \(z=\emb(y)\), and suppose that
\(\Psi=(\psi_0,\psi_s)\) is safe-exact over \(y\), witnessed by an intrinsic covector
\(\Phi_y\in\mathcal E_y^*\).  Set
\(y^+=d(y)\) and \(\Phi_{y^+}=\mathcal B_y^*\Phi_y\),
and define
\[
        \Psi^+
        =
        (\psi_0^+,\psi_s^+)
        :=
        \mathcal S_A(z,\Psi).
\]
Assume that the gate bounds in
\Cref{lem:local-cpwl-update-modules} hold for every local module evaluated in this step, and that
the outer switches satisfy
\begin{equation}
\label{eq:one-step-outer-gate-bound}
        \|Y_{\alpha,s}-Y_{\alpha,0}\|_\infty\le A,
        \qquad
        \alpha\in\mathcal A.
\end{equation}
Then \(\Psi^+\) is safe-exact over \(y^+\), witnessed by \(\Phi_{y^+}\).  Equivalently, for every
\(\alpha\in\mathcal A\), one has
\begin{equation}
\label{eq:propagated-safe-exactness}
        y^+\in K_\alpha
        \quad\Longrightarrow\quad
        \psi_\alpha^+
        =
        \Rep_\alpha^I(y^+,\Phi_{y^+}).
\end{equation}
If \(y^+\in K_0\cap K_s\), the two propagated components satisfy the finite chart-transition
identity at \(y^+\).
\end{proposition}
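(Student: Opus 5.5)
We fix a source chart \(\alpha\in\mathcal A\) with \(y^+\in K_\alpha\) and show \(\psi_\alpha^+=\Rep_\alpha^I(y^+,\Phi_{y^+})\). The argument splits into three cases according to the value \(\vartheta(z)\), with \(z=\emb(y)\), using the selector properties \eqref{eq:scalar-selector-properties}. The outer gate bound \eqref{eq:one-step-outer-gate-bound} places us within the hypotheses of \Cref{lem:exact-switch}, except for its agreement hypothesis, which we verify separately in the fractional case. The guiding observation is this: whenever the switch outputs a particular candidate \(Y_{\alpha,\beta}\), the point \(z\) lies in the trusted region \(\Gamma_{\alpha,\beta}\), so that candidate is exact.

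\emph{Case \(\vartheta(z)=1\).} Then \(y\in K_0\), so safe-exactness gives \(\psi_0=\Rep_0^I(y,\Phi_y)\). Together with \(y^+\in K_\alpha\) this means \(z\in\Gamma_{\alpha,0}\). The gate bounds are assumed, so \eqref{eq:local-module-canonical-exactness} yields
\[
        Y_{\alpha,0}=\Rep_\alpha^I(y^+,\Phi_{y^+}).
\]
By \Cref{lem:exact-switch} the switch returns \(Y_{\alpha,0}\), and this case is done.

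\emph{Case \(\vartheta(z)=0\).} This is symmetric: now \(y\in K_s\), so \(z\in\Gamma_{\alpha,s}\). The switch returns \(Y_{\alpha,s}\), which is exact for the same reason.

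\emph{Case \(0<\vartheta(z)<1\).} Then \(y\in K_0\cap K_s\), so both components of \(\Psi\) are canonical windows of \(\Phi_y\), and \(z\in\Gamma_{\alpha,0}\cap\Gamma_{\alpha,s}\). Each module then returns its branch transpose with the respective label \(q\) or \(q'\), determined by the relation \eqref{eq:trusted-branch-label} on the trusted component containing \(z\). These labels coincide with those in \Cref{cor:canonical-overlap-agreement}, because both are defined by the same source coordinate \(x_\alpha(y^+)\). The corollary then gives
\[
        Y_{\alpha,0}=Y_{\alpha,s}=\Rep_\alpha^I(y^+,\Phi_{y^+}),
\]
and \Cref{lem:exact-switch} returns this common value.

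The key point, and the one place where care is needed, is the identification of labels in the last case: the constant label of the trusted component must be the one defined pointwise by the lift relation. This holds by construction, since the label is locally constant on the component and is defined through \eqref{eq:trusted-branch-label}. We must also check that the labels lie in \(\mathcal Q^\sharp\), as the corollary requires. Because the coordinates lie in \([0,1)\), they belong to \(\mathcal Q\subset\mathcal Q^\sharp\).

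It remains to address the final assertion. If \(y^+\in K_0\cap K_s\), both propagated components equal canonical windows of the single covector \(\Phi_{y^+}\). \Cref{lem:finite-chart-transition} then gives the finite chart-transition identity at \(y^+\). The case \(y^+\notin K_\alpha\) requires nothing, since safe-exactness imposes no condition there.
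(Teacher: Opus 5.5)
Your proof is correct and follows the paper's argument. You make the same three-way case split on \(\vartheta(z)\), treat the plateau cases via the canonical exactness of the selected local module, use exact agreement on \(K_0\cap K_s\) in the fractional case, and close with \Cref{lem:finite-chart-transition}. The only difference is in the fractional case: the paper simply applies \eqref{eq:local-module-canonical-exactness} to each module, which already gives \(Y_{\alpha,0}=Y_{\alpha,s}=\Rep_\alpha^I(y^+,\Phi_{y^+})\), so your detour through \Cref{cor:canonical-overlap-agreement} and the label-matching check is harmless but unnecessary.
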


\begin{proof}
Fix \(\alpha\in\mathcal A\) such that \(y^+\in K_\alpha\), and write
\(\theta:=\vartheta(z)\).

Suppose first that \(\theta=1\).  By
\eqref{eq:scalar-selector-properties}, one has \(y\in K_0\).  Safe-exactness gives
\(\psi_0=\Rep_0^I(y,\Phi_y)\), and
\Cref{lem:local-cpwl-update-modules} yields
\[
        Y_{\alpha,0}
        =
        \Rep_\alpha^I(y^+,\Phi_{y^+}).
\]
The outer switch selects \(Y_{\alpha,0}\), and hence
\(\psi_\alpha^+=\Rep_\alpha^I(y^+,\Phi_{y^+})\).

If \(\theta=0\), then \(y\in K_s\).  The same argument in the half-shifted target chart gives
\[
        Y_{\alpha,s}
        =
        \Rep_\alpha^I(y^+,\Phi_{y^+}),
\]
and the outer switch returns \(Y_{\alpha,s}\).

It remains to consider \(0<\theta<1\).  In this case,
\eqref{eq:scalar-selector-properties} gives \(y\in K_0\cap K_s\).  Thus safe-exactness gives
\[
        \psi_0=\Rep_0^I(y,\Phi_y),
        \qquad
        \psi_s=\Rep_s^I(y,\Phi_y).
\]
Since \(y^+\in K_\alpha\), both local modules are evaluated on trusted regions.  Their canonical
exactness gives
\[
        Y_{\alpha,0}
        =
        Y_{\alpha,s}
        =
        \Rep_\alpha^I(y^+,\Phi_{y^+}).
\]
The outer switch therefore returns this common value.  This proves
\eqref{eq:propagated-safe-exactness} in all cases.

If \(y^+\in K_0\cap K_s\), the preceding result gives
\[
        \psi_0^+=\Rep_0^I(y^+,\Phi_{y^+}),
        \qquad
        \psi_s^+=\Rep_s^I(y^+,\Phi_{y^+}).
\]
Their chart-transition relation follows from
\Cref{lem:finite-chart-transition}.  Hence \(\Psi^+\) is safe-exact over \(y^+\).
\end{proof}

\subsection{Recursive propagation and gate scales}
\label{ss:recursive-atlas-adjoint-propagation}

Fix \(n\ge1\) and \(t\in[0,1]\).  
Write \(y_m=d^m([t])\) and \(z_m(t)=\emb(y_m)\) for \(m=0,\ldots,n\).
Thus \(z_0(t)=\emb([t])\) and \(z_{m+1}(t)=D(z_m(t))\).

Suppose that
\(\Psi_0(t)=\bigl(\psi_{0,0}(t),\psi_{s,0}(t)\bigr)\in E_I^*\oplus E_I^*\)
is safe-exact over \(y_0=[t]\), witnessed by
\(\Phi_0(t)\in\mathcal E_{y_0}^*\).  Define the intrinsic adjoint orbit by
\begin{equation}
\label{eq:recursive-intrinsic-adjoint}
        \Phi_{m+1}(t)
        :=
        \mathcal B_{y_m}^*\Phi_m(t),
        \qquad
        m=0,\ldots,n-1.
\end{equation}

For \(\Psi=(\psi_0,\psi_s)\), write
\[
        \|\Psi\|_\infty
        :=
        \max\{\|\psi_0\|_\infty,\|\psi_s\|_\infty\}.
\]
The fixed local modules and the gate contraction estimate
\eqref{eq:gate-contraction} give constants
\(C_{\rm gate},M_{\rm sel}\ge1\), depending only on the atlas construction and not on the gate
scale \(A\), such that
\begin{align}
\label{eq:atlas-gate-input-bound}
        \text{every gate argument in }\mathcal S_A(z,\Psi)
        &\text{ has magnitude at most }
        C_{\rm gate}\|\Psi\|_\infty,\\
\label{eq:selector-bound}
        \|\mathcal S_A(z,\Psi)\|_\infty
        &\le
        M_{\rm sel}\|\Psi\|_\infty.
\end{align}
Indeed, each local candidate is a finite sum of gated fixed linear branch maps, and the outer
selector is another application of the same contraction estimate.

Assume that
\[
        M_0
        =
        \sup_{\tau\in[0,1]}
        \|\Psi_0(\tau)\|_\infty
        <\infty,
\]
and choose
\begin{equation}
\label{eq:atlas-gate-scale}
        A_n
        =
        C_{\rm gate}M_{\rm sel}^n\max\{1,M_0\}.
\end{equation}
The finite atlas recursion is then
\begin{equation}
\label{eq:atlas-adjoint-recursion}
        \Psi_{m+1}(t)
        =
        \mathcal S_{A_n}\bigl(z_m(t),\Psi_m(t)\bigr),
        \qquad
        m=0,\ldots,n-1.
\end{equation}

\begin{lemma}[Uniform gate scale]
\label{lem:atlas-uniform-gate-scale}
Every internal local-module gate and every outer atlas switch in
\eqref{eq:atlas-adjoint-recursion} is evaluated within the scale \(A_n\).
\end{lemma}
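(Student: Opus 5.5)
The plan is an induction on the step index \(m\), carrying the a priori bound
\[
        \|\Psi_m(t)\|_\infty\le M_{\rm sel}^m\max\{1,M_0\},
        \qquad m=0,\ldots,n,\quad t\in[0,1].
\]
The case \(m=0\) is the definition of \(M_0\). For the inductive step, assume the bound holds at step \(m\le n-1\). The state \(\Psi_{m+1}(t)=\mathcal S_{A_n}(z_m(t),\Psi_m(t))\) is obtained by one application of the selector. The bound \eqref{eq:selector-bound} holds for every scale \(A\), since \(M_{\rm sel}\) is independent of \(A\). It therefore gives \(\|\Psi_{m+1}(t)\|_\infty\le M_{\rm sel}^{m+1}\max\{1,M_0\}\). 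The inductive step uses only the contraction estimate \eqref{eq:gate-contraction}, which holds for every \(\lambda\in[0,1]\). So it does not require knowing beforehand that the gates are in their exact regime, and no circularity arises.

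Given the bound, fix a step \(m\le n-1\). By \eqref{eq:atlas-gate-input-bound}, every gate argument occurring in \(\mathcal S_{A_n}(z_m(t),\Psi_m(t))\) has magnitude at most
\[
        C_{\rm gate}\|\Psi_m(t)\|_\infty\le C_{\rm gate}M_{\rm sel}^{m}\max\{1,M_0\}\le A_n,
\]
because \(M_{\rm sel}\ge1\) and \(m\le n\). These arguments are of two kinds:
\begin{itemize}
        \item the inner arguments \(L_\nu\psi_\beta\), for the local modules \(\mathcal U_{\alpha,\beta,A_n}\);
        \item the outer differences \(Y_{\alpha,s}-Y_{\alpha,0}\), for the switches.
\end{itemize}
Hence the hypotheses \(\|L_\nu\psi\|_\infty\le A\) of \Cref{lem:local-cpwl-update-modules} and \eqref{eq:one-step-outer-gate-bound} of \Cref{prop:safe-exact-compatible-propagation} both hold with \(A=A_n\) at every step.

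The main obstacle is to justify the two constants \(C_{\rm gate}\) and \(M_{\rm sel}\) concretely, with the stated independence of \(A\). I would make them explicit as follows.

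Let \(\|\cdot\|\) denote the \(\infty\to\infty\) operator norm, and set \(\Lambda:=\max_{\alpha,\beta,\nu}\|L_\nu\|\). This is a finite maximum over the fixed matrices \((T^I_{\alpha,\beta,q})^\trans\) with \(q\in\mathcal Q\). Let \(N_{\max}\) be the maximal number of trusted components. Both quantities depend only on the mask, on \(I\), and on \(\eta\).

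Then each inner gate argument satisfies \(\|L_\nu\psi_\beta\|_\infty\le\Lambda\|\Psi\|_\infty\). Applying \eqref{eq:gate-contraction} to each summand of \eqref{eq:local-module-plateau-formula} gives \(\|Y_{\alpha,\beta}\|_\infty\le N_{\max}\Lambda\|\Psi\|_\infty\). The outer difference is then bounded by \(2N_{\max}\Lambda\|\Psi\|_\infty\). The switch output \(Y_{\alpha,0}+\Pi(\cdot)\) is bounded by \(3N_{\max}\Lambda\|\Psi\|_\infty\), again by the contraction estimate. So one may take
\[
        C_{\rm gate}=\max\{1,2N_{\max}\Lambda\},
        \qquad
        M_{\rm sel}=\max\{1,3N_{\max}\Lambda\},
\]
neither of which involves \(A\).

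This bounds every gate evaluated for \(t\in[0,1]\) and \(m=0,\ldots,n-1\) by \(A_n\), which proves the lemma. Combined with \Cref{prop:safe-exact-compatible-propagation}, it also yields safe-exactness of \(\Psi_m\) over \(y_m\) for all \(m\le n\).
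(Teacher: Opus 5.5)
Your proposal is correct and follows the paper's proof: iterate the scale-independent selector bound to get \(\|\Psi_m(t)\|_\infty\le M_{\rm sel}^m\max\{1,M_0\}\le M_{\rm sel}^n\max\{1,M_0\}\), then apply the gate-input bound with \(C_{\rm gate}\) at each stage \(m<n\). Your explicit constants \(C_{\rm gate}=\max\{1,2N_{\max}\Lambda\}\) and \(M_{\rm sel}=\max\{1,3N_{\max}\Lambda\}\) make concrete the one-line justification the paper gives just before the lemma. Your observation that the induction uses only the contraction estimate is exactly why the paper calls the selector bound unconditional.
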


\begin{proof}
The unconditional selector bound gives
\[
        \|\Psi_m(t)\|_\infty
        \le
        M_{\rm sel}^m\|\Psi_0(t)\|_\infty
        \le
        M_{\rm sel}^n\max\{1,M_0\},
        \qquad
        0\le m\le n.
\]
Hence \eqref{eq:atlas-gate-input-bound} bounds every gate argument at a stage \(m<n\) by
\(A_n\).
\end{proof}

\begin{proposition}[Correctness of the recursive atlas adjoint]
\label{prop:recursive-atlas-adjoint-correctness}
For every \(m=0,\ldots,n\), the state
\[
        \Psi_m(t)
        =
        \bigl(\psi_{0,m}(t),\psi_{s,m}(t)\bigr)
\]
is safe-exact over \(y_m\), witnessed by the intrinsic covector \(\Phi_m(t)\).  
In particular,
\begin{equation}
\label{eq:recursive-safe-chart-correctness}
        y_m\in K_\beta
        \quad\Longrightarrow\quad
        \psi_{\beta,m}(t)
        =
        \Rep_\beta^I\bigl(y_m,\Phi_m(t)\bigr),
        \qquad
        \beta\in\mathcal A.
\end{equation}
On \(K_0\cap K_s\), the two components satisfy the finite chart-transition identity.
\end{proposition}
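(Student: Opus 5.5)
The proof is an induction on \(m\), with \Cref{prop:safe-exact-compatible-propagation} as the inductive step and \Cref{lem:atlas-uniform-gate-scale} supplying its gate hypotheses.

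For the base case \(m=0\), the state \(\Psi_0(t)\) is safe-exact over \(y_0=[t]\) by assumption, and the witness is \(\Phi_0(t)\). No further argument is needed.

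For the inductive step, fix \(0\le m<n\) and assume \(\Psi_m(t)\) is safe-exact over \(y_m\), witnessed by \(\Phi_m(t)\). By \Cref{lem:loop-controller}, the loop state is \(z_m(t)=\emb(y_m)\), so \Cref{prop:safe-exact-compatible-propagation} applies with \(z=z_m(t)\), \(y=y_m\), \(\Psi=\Psi_m(t)\) and \(A=A_n\). That proposition needs two hypotheses, and both come from \Cref{lem:atlas-uniform-gate-scale}:
\begin{enumerate}[label=(\roman*),leftmargin=2em]
    \item the internal gate bounds \(\|L_\nu\psi_\beta\|_\infty\le A_n\) for the local modules \(\mathcal U_{\alpha,\beta,A_n}\);
    \item the outer switch bound \eqref{eq:one-step-outer-gate-bound}.
\end{enumerate}
Both quantities are gate arguments inside \(\mathcal S_{A_n}(z_m(t),\Psi_m(t))\). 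By \eqref{eq:atlas-gate-input-bound} and the bound \(\|\Psi_m(t)\|_\infty\le M_{\rm sel}^n\max\{1,M_0\}\), each is at most \(A_n\). The conclusion is that \(\Psi_{m+1}(t)=\mathcal S_{A_n}(z_m(t),\Psi_m(t))\) is safe-exact over \(d(y_m)=y_{m+1}\), witnessed by \(\mathcal B_{y_m}^*\Phi_m(t)=\Phi_{m+1}(t)\), which is \eqref{eq:recursive-intrinsic-adjoint}.

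The displayed statement \eqref{eq:recursive-safe-chart-correctness} is just the definition \eqref{eq:safe-exact-state-condition} of safe-exactness. On \(K_0\cap K_s\), both components are canonical windows of the same covector \(\Phi_m(t)\), so the chart-transition identity follows from \Cref{lem:finite-chart-transition}.

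The only point requiring care is that the bound in \eqref{eq:atlas-gate-input-bound} is unconditional. It rests on the gate contraction \eqref{eq:gate-contraction}, so it holds whether or not the state is exact, and therefore no circularity arises between "the gates behave exactly" and "the state is exact". I will make sure that \Cref{lem:atlas-uniform-gate-scale} is applied at every stage \(m<n\), including the local-module gates and not only the outer switches.
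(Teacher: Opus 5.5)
Your proposal is correct and follows the paper's own proof: induction on \(m\), with the base case given by hypothesis, the inductive step supplied by \Cref{prop:safe-exact-compatible-propagation} once \Cref{lem:atlas-uniform-gate-scale} discharges both the local-module gate bounds and the outer switch bound, and the chart-transition claim on \(K_0\cap K_s\) obtained from \Cref{lem:finite-chart-transition}. Your remark that the gate-input bound is unconditional, so there is no circularity between gate exactness and state exactness, matches the paper's use of the ``unconditional selector bound'' in \Cref{lem:atlas-uniform-gate-scale}.
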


\begin{proof}
The assertion holds at \(m=0\) by hypothesis.  If it holds at stage \(m\), then
\Cref{lem:atlas-uniform-gate-scale} places all gates in their exact ranges, and
\Cref{prop:safe-exact-compatible-propagation} shows that
\[
        \Psi_{m+1}(t)
        =
        \mathcal S_{A_n}\bigl(z_m(t),\Psi_m(t)\bigr)
\]
is safe-exact over \(y_{m+1}=d(y_m)\), witnessed by
\(\Phi_{m+1}(t)=\mathcal B_{y_m}^*\Phi_m(t)\).  Induction proves
\eqref{eq:recursive-safe-chart-correctness}.  When both charts are safe, their components are
canonical windows of the same intrinsic covector, so
\Cref{lem:finite-chart-transition} gives their compatibility.
\end{proof}

\subsection{Network realization of the atlas recursion}
\label{ss:network-realization-atlas-recursion}

The initial atlas state may require a realization whose depth depends on \(n\).  We denote this
initial cost by \(D_0(n)\); the atlas recursion adds only \(O(n)\) further depth.

\begin{theorem}[Fixed-width realization of the atlas adjoint recursion]
\label{thm:fixed-width-atlas-adjoint-recursion}
Fix the mask, the support window, the padded index set \(I\), and the two-chart atlas.  For
\(t\in[0,1]\) and \(0\le m\le n\), set
\(y_m=[2^m t]\).
Suppose that the initial map
\(t\mapsto\Psi_0(t)\in E_I^*\oplus E_I^*\)
is realized on \([0,1]\) by ReLU networks of width bounded independently of \(n\) and depth
\(D_0(n)\).  Assume that \(\Psi_0(t)\) is safe-exact over \(y_0\), witnessed by
\(\Phi_0(t)\in\mathcal E_{y_0}^*\), and that
\[
        \sup_{\tau\in[0,1]}\|\Psi_0(\tau)\|_\infty\le M_0
\]
for some \(M_0\) independent of \(n\).

Choose \(A_n\) by \eqref{eq:atlas-gate-scale}, and define the intrinsic and finite recursions by
\eqref{eq:recursive-intrinsic-adjoint} and \eqref{eq:atlas-adjoint-recursion}.  Then the terminal
map \(t\mapsto\Psi_n(t)\) is realized on \([0,1]\) by a ReLU network of width bounded
independently of \(n\) and depth
\(D_0(n)+O(n)\).
Moreover, writing
\(\Psi_n(t)=(\psi_{0,n}(t),\psi_{s,n}(t))\), one has
\begin{equation}
\label{eq:network-final-safe-exactness}
        y_n\in K_\beta
        \quad\Longrightarrow\quad
        \psi_{\beta,n}(t)
        =
        \Rep_\beta^I\bigl(y_n,\Phi_n(t)\bigr),
        \qquad
        \beta\in\mathcal A.
\end{equation}
\end{theorem}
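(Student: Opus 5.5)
The proof has two parts: the exactness statement \eqref{eq:network-final-safe-exactness}, and the network realization.

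\emph{Exactness.} The first part is \Cref{prop:recursive-atlas-adjoint-correctness} at the terminal stage \(m=n\). Its hypotheses are all in the statement: \(\Psi_0(t)\) is safe-exact over \(y_0\), witnessed by \(\Phi_0(t)\), and \(\sup_\tau\|\Psi_0(\tau)\|_\infty\le M_0\). With \(A_n\) chosen by \eqref{eq:atlas-gate-scale}, \Cref{lem:atlas-uniform-gate-scale} puts every internal gate and every outer switch in its exact range. The constants \(C_{\rm gate},M_{\rm sel}\) in \eqref{eq:atlas-gate-input-bound}--\eqref{eq:selector-bound} do not depend on \(A\), since the gate contraction \eqref{eq:gate-contraction} holds for every \(A\). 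Hence fixing \(A=A_n\) after the bounds are established is not circular. The proposition then gives safe-exactness at every stage, in particular at \(m=n\). I also note \(y_m=[2^mt]=d^m([t])\) and \(z_m(t)=\emb(y_m)\), by \Cref{lem:loop-controller}.

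\emph{Architecture.} The network carries the augmented state \((z_m(t),\Psi_m(t))\in\R^2\times(E_I^*\oplus E_I^*)\). Its dimension \(2+2|I|\) is fixed by the mask and \(L\). The steps are as follows.
\begin{enumerate}[label=(\alph*),leftmargin=2em]
\item Compute \(z_0(t)=\emb([t])\), which for \(t\in[0,1]\) equals the fixed CPwL map \(\emb(t)\). This needs fixed width and depth.
\item In parallel, compute \(\Psi_0(t)\) with the given network of depth \(D_0(n)\).
\item Carry \(z_0\) forward through identity channels \(u=\ReLU(u)-\ReLU(-u)\), which costs width \(2\) per coordinate, so that both branches have equal depth.
\item Apply the fixed-architecture block
\[
(z,\Psi)\mapsto\bigl(D(z),\mathcal S_{A_n}(z,\Psi)\bigr)
\]
\(n\) times.
\end{enumerate}

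\emph{The per-step block.} The step I expect to need care is showing that this block has width and depth independent of \(n\), even though \(A_n\) grows with \(n\). \(D\) is a fixed CPwL map, so it is covered by \Cref{rem:cpwl-basic}(ii). For \(\mathcal S_{A_n}\), I would not invoke abstract CPwL realization with \(n\)-dependent pieces. Instead I would write it explicitly as a composition of three kinds of pieces:
\begin{itemize}[leftmargin=2em]
\item the fixed CPwL scalar maps \(\eta_\nu(z)\) and \(\vartheta(z)\), realized once with fixed width and depth;
\item the linear maps \(\psi\mapsto L_\nu\psi\);
\item the gates \(\Pi_{A}(\lambda,w)=\ReLU(w-A+A\lambda)-\ReLU(-w-A+A\lambda)\).
\end{itemize}
Each gate is one hidden ReLU layer applied to an affine function of \((\lambda,w)\). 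The parameter \(A_n\) enters only as a weight and bias, which \Cref{rem:cpwl-basic} allows to grow with \(n\). The local modules \eqref{eq:local-module-plateau-formula} are then one gate layer followed by a linear sum. The outer switch \eqref{eq:exact-switch} is a second gate layer, plus identity passthrough of \(Y_{\alpha,0}\) and of \(z\). So one block has width \(O(N|I|+|I|)\) and a depth bound independent of \(n\) and \(A\).

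\emph{Counting.} Stacking \(n\) such blocks after the initial stage gives total depth \(D_0(n)+O(1)+n\cdot O(1)=D_0(n)+O(n)\). The width is the maximum of the initial width plus padding channels and the block width. Both are bounded independently of \(n\).
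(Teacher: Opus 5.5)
Your proposal is correct and follows essentially the same route as the paper. Exactness comes from \Cref{prop:recursive-atlas-adjoint-correctness} at \(m=n\), with all gates placed in their exact ranges by \Cref{lem:atlas-uniform-gate-scale}; the realization is the joint initialization \((\emb([t]),\Psi_0(t))\) followed by \(n\) copies of the fixed block \((z,\Psi)\mapsto(D(z),\mathcal S_{A_n}(z,\Psi))\), and your explicit breakdown of that block into gate layers in which \(A_n\) enters only as a weight and bias spells out what the paper states in one sentence.
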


\begin{proof}
The joint initialization
\(t\mapsto\bigl(\emb([t]),\Psi_0(t)\bigr)\)
has width bounded independently of \(n\) and depth \(D_0(n)+O(1)\).
For fixed \(n\), one recursion step is the CPwL map
\[
        (z,\Psi)
        \longmapsto
        \bigl(D(z),\mathcal S_{A_n}(z,\Psi)\bigr).
\]
It is assembled from the fixed loop controller, finitely many fixed linear branch maps and
plateau functions, and ReLU gates whose numerical scale is \(A_n\).  Its width and depth are
therefore bounded independently of \(n\).  Repeating this block \(n\) times realizes
\(\Psi_n\) with depth \(D_0(n)+O(n)\) and fixed width.
All gates are exact by \Cref{lem:atlas-uniform-gate-scale}, and
\eqref{eq:network-final-safe-exactness} follows from
\Cref{prop:recursive-atlas-adjoint-correctness}.
\end{proof}

\section{Reference rank-one terminal atoms}
\label{sec:rank-one-terminal-atoms}

We now combine the scalar atlas readout of \Cref{sec:scalar-readout} with the atlas adjoint
recursion of \Cref{sec:adjoint-branch-selectors}.  This section proves the realization theorem for
rank-one atoms whose scalar profiles are supported inside a fixed region where the ordinary chart
is safe.  In \Cref{sec:general-cpwl-seeds}, arbitrary compactly supported CPwL seeds will be
reduced to finitely many translated atoms of this form.

Fix a closed interval \(J_{\rm ref}\Subset(0,1)\) such that
\begin{equation}
\label{eq:reference-interval-ordinary}
        t\in J_{\rm ref}
        \quad\Longrightarrow\quad
        [t]\in\operatorname{int}K_0.
\end{equation}
Let \(h\in\CPwL([0,1])\) satisfy \(\supp h\subset J_{\rm ref}\), and let
\(u\in E_I\).  Write \(u^\infty=\jmath_Iu\in E\) for its zero extension.  
Define a section \(W\) by prescribing its representative in the ordinary lift
\(\bar y\in[0,1)\):
\begin{equation}
\label{eq:reference-atom-ordinary-representative}
        w^\infty(y)
        :=
        h(\bar y)u^\infty,
        \qquad
        y\in\T.
\end{equation}
Because \(h\) vanishes in neighborhoods of both endpoints, this representative is zero near the
ordinary seam and therefore defines a well-defined CPwL section of the residual local system \(\mathcal E\).
We call \(W\) a \emph{reference rank-one terminal atom}.

Define the refinements of \(W_0=W\) intrinsically by
\begin{equation}
\label{eq:reference-atom-primal-cascade}
        W_{m+1}(y)
        =
        \mathcal B_y W_m(d(y)),
        \qquad
        m\ge0.
\end{equation}
No condition is imposed on \(u\) beyond \(u\in E_I\).  The localization
\(\supp h\subset J_{\rm ref}\) ensures that whenever the terminal scalar factor is nonzero, the
ordinary chart is safe, so the terminal contribution can be recovered by a direct ordinary-chart
pairing.  The atlas selector remains confined to the adjoint recursion.

\subsection{The reference-atom adjoint cascade}
\label{ss:rank-one-cascade}

Fix an output covector \(\lambda\in E_I^*\), and let
\(\lambda^\infty\in E^*\) be its zero extension.  We work with a general \(\lambda\) because the
adjoint argument is linear; in the coordinate realization used later, \(\lambda\) will be chosen
from the standard coordinate covectors of \(E_I^*\).

For \(t\in[0,1]\), set \(y_0=[t]\), and let
\(\Lambda_0(t)\in\mathcal E_{y_0}^*\) be the intrinsic covector whose representative in the lift
\(t\) is \(\lambda^\infty\).  Define
\begin{equation}
\label{eq:rank-one-target-function}
        F_{n,\lambda,W}(t)
        :=
        \bigl\langle
                \Lambda_0(t),W_n(y_0)
        \bigr\rangle .
\end{equation}
Thus \(F_{n,\lambda,W}(t)\) is the linear readout of the ordinary chart representative of
\(W_n(y_0)\) determined by \(\lambda\).  For a standard coordinate covector, it extracts the
corresponding component.

For \(0\le m\le n\), set \(y_m=d^m(y_0)=[2^m t]\), and let \(z_m(t)=\emb(y_m)\)
be the corresponding loop state.  Since \(h(0)=h(1)=0\), the profile \(h\) defines a CPwL
function on \(\T\).  If \(H\) denotes its scalar atlas readout from
\Cref{prop:two-grid-scalar-readout}, define the terminal scalar
\(H_n(t):=H(z_n(t))\).  The exactness of that readout gives
\(H_n(t)=h(y_n)=h(\bar y_n)\).

Starting from \(\Lambda_0(t)\), define the intrinsic adjoint cascade by
\begin{equation}
\label{eq:rank-one-unscaled-adjoint-cascade}
        \Lambda_{m+1}(t)
        =
        \mathcal B_{y_m}^*\Lambda_m(t),
        \qquad
        m=0,\ldots,n-1.
\end{equation}
The pairing identity
\eqref{eq:invariant-terminal-pairing} gives
\[
        F_{n,\lambda,W}(t)
        =
        \bigl\langle
                \Lambda_n(t),W(y_n)
        \bigr\rangle .
\]
Let \(\lambda_n^\infty(t)\in E^*\) denote the representative of
\(\Lambda_n(t)\) in the ordinary lift \(\bar y_n\).  By
\eqref{eq:reference-atom-ordinary-representative}, we obtain
\begin{equation}
\label{eq:rank-one-terminal-factorization}
        F_{n,\lambda,W}(t)
        =
        H_n(t)
        \bigl\langle
                \lambda_n^\infty(t),u^\infty
        \bigr\rangle .
\end{equation}

The scalar factor \(H_n(t)\) depends on the terminal residual point, whereas the adjoint cascade
uses the residual states \(y_0,\ldots,y_{n-1}\).  To avoid multiplying two variable network outputs
after the recursion, we absorb \(H_n(t)\) into the initial covector.  Define
\begin{equation}
\label{eq:rank-one-initial-covector}
        \Phi_0(t)
        =
        H_n(t)\Lambda_0(t)
        \in\mathcal E_{y_0}^*,
\end{equation}
and propagate it intrinsically by
\begin{equation}
\label{eq:rank-one-scaled-adjoint-cascade}
        \Phi_{m+1}(t)
        =
        \mathcal B_{y_m}^*\Phi_m(t),
        \qquad
        m=0,\ldots,n-1.
\end{equation}
By linearity, \(\Phi_m(t)=H_n(t)\Lambda_m(t)\) for every \(m\).  Let
\(\phi_n^\infty(t)\) denote the representative of \(\Phi_n(t)\) in the ordinary lift
\(\bar y_n\).  Then
\[
        \phi_n^\infty(t)
        =
        H_n(t)\lambda_n^\infty(t).
\]
Since \(u^\infty\) is supported in \(I\), the factorization
\eqref{eq:rank-one-terminal-factorization} becomes
\begin{equation}
\label{eq:rank-one-adjoint-pairing-identity}
        F_{n,\lambda,W}(t)
        =
        \bigl\langle
                \phi_n^\infty(t),u^\infty
        \bigr\rangle
        =
        \bigl\langle
                \Rep_0^I\bigl(y_n,\Phi_n(t)\bigr),u
        \bigr\rangle .
\end{equation}

The network implements this identity in two passes.  The first pass computes the terminal scalar
\(H_n(t)\) while carrying the input \(t\) unchanged.  This scalar initializes a redundant atlas
state \(\Psi_0(t)\in E_I^*\oplus E_I^*\) that is safe-exact over \(y_0=[t]\), with witness
\(\Phi_0(t)\).  Starting again from \(y_0\), the second pass reruns the forward residual orbit and
propagates finite chart windows representing the intrinsic covectors
\(\Phi_0(t),\ldots,\Phi_n(t)\) in
\eqref{eq:rank-one-scaled-adjoint-cascade}.

\subsection{Continuous atlas initialization}
\label{ss:continuous-atlas-initialization}

Let
\(\Phi_0(t)=H_n(t)\Lambda_0(t)\in\mathcal E_{[t]}^*\)
be the intrinsic initial covector from
\eqref{eq:rank-one-initial-covector}.  
In the lift \(t\), the representative of
\(\Lambda_0(t)\) is the zero extension \(\lambda^\infty\), so the ordinary component is
\[
        \psi_{0,0}(t):=H_n(t)\lambda.
\]

The canonical half-shifted representative changes when \(t\) crosses the shifted-chart seam
\(s=\frac12\).  Away from the seam, its finite window is
\[
        H_n(t)(C_{-1}^I)^\trans\lambda
        \quad\text{for }t<s,
        \qquad
        H_n(t)\lambda
        \quad\text{for }t>s.
\]
The transition region is outside the safe set \(K_s\), so these two expressions may be joined
there by an arbitrary continuous CPwL interpolation.
Thus set
\[
        Z_-^{\rm init}(t)
        :=
        H_n(t)(C_{-1}^I)^\trans\lambda,
        \qquad
        Z_+^{\rm init}(t)
        :=
        H_n(t)\lambda.
\]
Choose a fixed CPwL function \(\xi_s:[0,1]\to[0,1]\) such that
\[
        \xi_s(t)=0
        \quad\text{for }0\le t\le s-\mar,
        \qquad
        \xi_s(t)=1
        \quad\text{for }s+\mar\le t\le1,
\]
and extend it globally as a CPwL function on \(\R\).  
Define
\begin{equation}
\label{eq:shifted-atlas-initialization}
        \psi_{s,0}(t)
        :=
        \Sw_{A_{\rm init}}
        \bigl(
                \xi_s(t);
                Z_+^{\rm init}(t),
                Z_-^{\rm init}(t)
        \bigr),
\end{equation}
with
\begin{equation}
\label{eq:atlas-initialization-scale}
        A_{\rm init}
        =
        \max\left\{
                1,\,
                \|h\|_\infty
                \bigl\|
                        (C_{-1}^I)^\trans\lambda-\lambda
                \bigr\|_\infty
        \right\} ,
\end{equation}
and set
\begin{equation}
\label{eq:atlas-initial-state}
        \Psi_0(t)
        :=
        \bigl(\psi_{0,0}(t),\psi_{s,0}(t)\bigr)
        \in E_I^*\oplus E_I^*.
\end{equation}

\begin{lemma}[Continuous atlas initialization]
\label{lem:continuous-atlas-initialization}
For every \(t\in[0,1]\), the state \(\Psi_0(t)\) defined by
\eqref{eq:shifted-atlas-initialization}--\eqref{eq:atlas-initial-state} is safe-exact over
\([t]\), with witness \(\Phi_0(t)\).  Equivalently,
\begin{equation}
\label{eq:initial-safe-exactness}
        [t]\in K_\beta
        \quad\Longrightarrow\quad
        \psi_{\beta,0}(t)
        =
        \Rep_\beta^I\bigl([t],\Phi_0(t)\bigr),
        \qquad
        \beta\in\mathcal A.
\end{equation}
On \(K_0\cap K_s\), the two components also satisfy the finite chart-transition identity.
\end{lemma}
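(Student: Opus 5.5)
The proof verifies \eqref{eq:initial-safe-exactness} separately for \(\beta=0\) and \(\beta=s\). The chart-transition statement on \(K_0\cap K_s\) then follows at once from \Cref{lem:finite-chart-transition}, because both components will have been identified with canonical windows of the same intrinsic covector \(\Phi_0(t)\).

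\emph{The ordinary component.} For \(t\in[0,1)\) the ordinary lift of \([t]\) is \(\bar y_0=t\). By definition of \(\Lambda_0(t)\), its representative in this lift is \(\lambda^\infty\). Hence \(\psi_0^\infty([t],\Phi_0(t))=H_n(t)\lambda^\infty\), and restricting by \(p_I\) gives \(\Rep_0^I([t],\Phi_0(t))=H_n(t)\lambda=\psi_{0,0}(t)\). The endpoint \(t=1\) needs no argument, since \([1]=[0]\notin K_0\).

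\emph{The shifted component.} Suppose \([t]\in K_s\), so \(x_s([t])\in[\mar,1-\mar]\). This forces either \(t\le s-\mar\) or \(t\ge s+\mar\).
\begin{enumerate}[label=(\roman*),leftmargin=2em]
\item If \(t\ge s+\mar\), then \(\kappa_s=0\) by \eqref{eq:chart-lift}, so the shifted representative equals the ordinary one, \(H_n(t)\lambda^\infty\). Also \(\xi_s(t)=1\), so the switch returns \(Z_+^{\rm init}(t)=H_n(t)\lambda\), as required.
\item If \(t\le s-\mar\), then \(\kappa_s=1\). By \eqref{eq:canonical-covector-rep-shift}, the shifted representative has coordinates \((H_n(t)\lambda^\infty)_{i+1}\). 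Restricted to \(I\), this is \(H_n(t)(C_{-1}^I)^\trans\lambda\). The check uses \eqref{eq:CI-kappa}: \(((C_{-1}^I)^\trans\lambda)_i=\lambda_{i+1}\) when \(i+1\in I\), and is \(0\) otherwise, which agrees with the zero extension \(\lambda^\infty_{i+1}=0\) for \(i+1\notin I\). Here \(\xi_s(t)=0\), so the switch returns \(Z_-^{\rm init}(t)\).
\item The case \(t=1\) falls under (i), and \(t=0\) under (ii), where \(\bar y=0\).
\end{enumerate}

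\emph{The switch hypothesis.} It remains to justify that \(\Sw\) behaves as claimed at the plateau values \(\xi_s(t)\in\{0,1\}\). At \(\theta=0\) the switch returns its second argument unconditionally. At \(\theta=1\) it needs \(\|Z_-^{\rm init}-Z_+^{\rm init}\|_\infty\le A_{\rm init}\), which holds because \(|H_n(t)|=|h(y_n)|\le\|h\|_\infty\), using the exactness in \Cref{prop:two-grid-scalar-readout}. The fractional region \(0<\xi_s<1\) lies in \((s-\mar,s+\mar)\), hence outside \(K_s\), so no exactness is required there. This is the only point where \Cref{lem:exact-switch} is not applied verbatim, because \(Y_0\ne Y_1\) is allowed in the fractional region. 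The plateau cases should be checked directly from \Cref{lem:gate-identities} instead.

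The main obstacle is the index bookkeeping in (ii): one has to confirm that the truncated transpose \((C_{-1}^I)^\trans\) reproduces the \(I\)-window of the infinitely shifted covector exactly, including the boundary index \(i_+\). As shown in (ii), this works because \(\lambda^\infty\) is a zero extension of \(\lambda\).
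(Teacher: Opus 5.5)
Your proof follows the paper's argument essentially step for step: the ordinary lift for $\beta=0$, the split $t\le s-\mar$ versus $t\ge s+\mar$ with $\kappa_s\in\{1,0\}$ for $\beta=s$, the plateau values of $\xi_s$, and the bound $|H_n|\le\|h\|_\infty$. Your direct plateau check of the switch is more careful than the paper's blanket appeal to exact switching (whose agreement hypothesis fails on the fractional band), and treating $t=1$ for the ordinary component via $[0]\notin K_0$ is a valid shortcut. One small slip: at $t=1$ you have $\bar y=0$ and $\kappa_s([1])=1$, so the stated reason in case (i) does not literally apply there. The conclusion still holds, because the chosen shifted lift $s+x_s([0])=1$ coincides with $t$, or, as the paper argues, simply because $H_n(1)=h([0])=0$.
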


\begin{proof}
Since \(|H_n(x)|\le\|h\|_\infty\), the two shifted-chart candidates satisfy
\[
        \|Z_-^{\rm init}(t)-Z_+^{\rm init}(t)\|_\infty
        \le
        \|h\|_\infty
        \bigl\|(C_{-1}^I)^\trans\lambda-\lambda\bigr\|_\infty
        \le A_{\rm init}.
\]
Thus the exact switch in \eqref{eq:shifted-atlas-initialization} applies throughout
\([0,1]\).

For \(0\le t<1\), the lift \(t\) is the chosen ordinary lift of \([t]\), and the corresponding
representative of \(\Phi_0(t)\) is \(H_n(t)\lambda^\infty\).  Hence
\[
        \Rep_0^I\bigl([t],\Phi_0(t)\bigr)
        =
        H_n(t)\lambda
        =
        \psi_{0,0}(t).
\]
At \(t=1\), one has \(y_n=[2^n]=[0]\), and therefore
\(H_n(1)=h([0])=0\).  Thus \(\Phi_0(1)=0\), and the same identity holds independently of the
chosen lift.  The ordinary component is therefore exact on all of \([0,1]\).

Suppose that \(0\le t\le s-\mar\).  Then \([t]\in K_s\), the chosen shifted lift is \(t+1\),
and \(\kappa_s([t])=1\).  The dual lift-change rule gives
\[
        \Rep_s^I\bigl([t],\Phi_0(t)\bigr)
        =
        p_I C_{-1}^*
        \bigl(H_n(t)\lambda^\infty\bigr)
        =
        H_n(t)(C_{-1}^I)^\trans\lambda
        =
        Z_-^{\rm init}(t).
\]
Since \(\xi_s(t)=0\), the switch returns \(Z_-^{\rm init}(t)\).

If \(s+\mar\le t<1\), then \([t]\in K_s\), the chosen shifted lift is \(t\), and
\(\kappa_s([t])=0\).  Consequently,
\[
        \Rep_s^I\bigl([t],\Phi_0(t)\bigr)
        =
        H_n(t)\lambda
        =
        Z_+^{\rm init}(t).
\]
Since \(\xi_s(t)=1\), the switch returns \(Z_+^{\rm init}(t)\).

At \(t=1\), the identity \(H_n(1)=0\) yields
\[
        Z_-^{\rm init}(1)
        =
        Z_+^{\rm init}(1)
        =
        0
        =
        \Rep_s^I\bigl([1],\Phi_0(1)\bigr),
\]
so the shifted component is exact there as well.  On the remaining interval
\(s-\mar<t<s+\mar\), one has \([t]\notin K_s\), and safe-exactness imposes no condition on the
shifted component.

This proves \eqref{eq:initial-safe-exactness}.  Finally, on \(K_0\cap K_s\), both components are
canonical finite chart windows of the same intrinsic covector \(\Phi_0(t)\); their compatibility
therefore follows from \Cref{lem:finite-chart-transition}.
\end{proof}

The initialization is a fixed CPwL function of \((t,H_n(t))\).  Thus, once the first pass has
computed and carried \(H_n(t)\) together with \(t\), forming \(\Psi_0(t)\) requires only a
constant additional amount of width and depth.
Set
\begin{equation}
\label{eq:reference-initial-state-bound}
        M_0
        :=
        \|h\|_{L^\infty([0,1])}
        \max\left\{
                \|\lambda\|_\infty,\,
                \|(C_{-1}^I)^\trans\lambda\|_\infty
        \right\}.
\end{equation}
The ordinary component is bounded by
\(\|h\|_\infty\|\lambda\|_\infty\), while the shifted component lies coordinatewise between its
two candidates.  Hence
\[
        \sup_{\tau\in[0,1]}
        \|\Psi_0(\tau)\|_\infty
        \le M_0.
\]
In particular, the bound \(M_0\) is independent of \(n\).

\subsection{Two-pass realization for reference atoms}
\label{ss:rank-one-two-pass-realization}

Fix \(n\ge1\).  In the first pass, the network carries \(t\) unchanged while advancing
\(z_0(t)=\emb([t])\) through \(n\) steps of the loop controller.  The scalar atlas readout from
\Cref{prop:two-grid-scalar-readout} then gives
\begin{equation}
\label{eq:reference-terminal-scalar}
        H_n(t):=H(z_n(t))=h([2^n t])=h(y_n).
\end{equation}
The initialization of \Cref{lem:continuous-atlas-initialization} uses the pair
\((t,H_n(t))\) to form a safe-exact state
\(\Psi_0(t)=(\psi_{0,0}(t),\psi_{s,0}(t))\), witnessed over \(y_0=[t]\) by
\(\Phi_0(t)=H_n(t)\Lambda_0(t)\).  Since the first pass has depth \(O(n)\) and the initialization
adds only constant overhead, \(\Psi_0\) is realized with fixed width and depth \(D_0(n)=O(n)\).
Moreover, its bound \(M_0\) from \eqref{eq:reference-initial-state-bound} is independent of \(n\).

The second pass restarts from \(z_0(t)\) and applies the atlas adjoint recursion for \(n\) steps.
By \Cref{thm:fixed-width-atlas-adjoint-recursion}, the resulting terminal state
\(\Psi_n(t)=(\psi_{0,n}(t),\psi_{s,n}(t))\) has fixed width and depth \(O(n)\).  Its scalar output
is the fixed linear readout
\begin{equation}
\label{eq:rank-one-network-output}
        \mathcal N_{n,\lambda,W}(t)
        :=
        \langle\psi_{0,n}(t),u\rangle.
\end{equation}
Note that the half-shifted component is needed only to propagate the atlas state across
intermediate chart seams.  At the terminal point, the reference profile is supported in the
ordinary safe region, so the ordinary component alone gives the required readout.

\begin{theorem}[Exact realization of reference rank-one contributions]
\label{thm:rank-one-realization}
Let \(W\) be the reference rank-one terminal atom determined by
\(h\in\CPwL([0,1])\) and \(u\in E_I\), and let \(\lambda\in E_I^*\).  For
\(t\in[0,1]\), let \(\Lambda_0(t)\in\mathcal E_{[t]}^*\) be the intrinsic covector represented by
the zero extension \(\lambda^\infty\) in the lift \(t\).
Then, for every \(t\in[0,1]\) and \(n\ge1\), the two-pass output satisfies
\begin{equation}
\label{eq:reference-atom-network-correctness}
        \mathcal N_{n,\lambda,W}(t)
        =
        F_{n,\lambda,W}(t)
        =
        \bigl\langle
                \Lambda_0(t),W_n([t])
        \bigr\rangle.
\end{equation}
Consequently, there exist integers \(C_0,C_1\ge1\), depending only on the fixed refinement and
atlas data and on \(h,u,\lambda\), such that
\begin{equation}
\label{eq:reference-network-class}
        F_{n,\lambda,W}
        \in
        \Ups_{C_0,C_1n}(\ReLU;1,1),
        \qquad
        n\ge1.
\end{equation}
\end{theorem}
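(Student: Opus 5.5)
The argument has two parts: exactness of the scalar output, then the architecture count. For exactness, fix \(t\in[0,1]\) and \(n\ge1\). By \eqref{eq:rank-one-adjoint-pairing-identity} we already have
\[
F_{n,\lambda,W}(t)=\bigl\langle\Rep_0^I(y_n,\Phi_n(t)),u\bigr\rangle ,
\]
so it suffices to compare \(\psi_{0,n}(t)\) with \(\Rep_0^I(y_n,\Phi_n(t))\) after pairing with \(u\).

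First apply \Cref{lem:continuous-atlas-initialization}: the state \(\Psi_0(t)\) is safe-exact over \([t]\) with witness \(\Phi_0(t)\). Its bound \(M_0\) from \eqref{eq:reference-initial-state-bound} is independent of \(n\). Hence \Cref{thm:fixed-width-atlas-adjoint-recursion} applies with the scale \(A_n\) of \eqref{eq:atlas-gate-scale}. It gives \eqref{eq:network-final-safe-exactness}: whenever \(y_n\in K_0\), we have \(\psi_{0,n}(t)=\Rep_0^I(y_n,\Phi_n(t))\). In that case the identity follows immediately.

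The main point is the complementary case \(y_n\notin K_0\). There the ordinary component is not controlled, so the pairing argument cannot be used and something else must show both sides vanish.

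\begin{itemize}
\item The right-hand side vanishes. By \eqref{eq:reference-interval-ordinary}, \(\supp h\subset J_{\rm ref}\) lies in the ordinary safe region, so \(y_n\notin K_0\) forces \(H_n(t)=h(\bar y_n)=0\). Then \(\Phi_0(t)=0\), hence \(\Phi_n(t)=0\) and \(F_{n,\lambda,W}(t)=0\).
\item The network output vanishes. Since \(H_n(t)=0\), both initial components vanish: \(\psi_{0,0}=0\), and both shifted candidates \(Z_\pm^{\rm init}\) are zero. The selector \(\mathcal S_{A}\) is positively homogeneous in \(\Psi\) and maps \(\Psi=0\) to \(0\). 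This holds because every local module is a sum of gates \(\Pi_A(\eta,L_\nu\cdot 0)=0\) (\Cref{lem:gate-identities}), and the outer switch of two zero candidates is zero. By induction \(\Psi_m(t)=0\) for all \(m\), so \(\psi_{0,n}(t)=0\).
\end{itemize}

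Alternatively, one can use the bound \eqref{eq:selector-bound} with \(\|\Psi_0\|_\infty=0\) directly. In either case both sides are zero, which gives \eqref{eq:reference-atom-network-correctness} for all \(t\).

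For the architecture, the first pass consists of \(n\) loop-controller steps, by \Cref{lem:loop-controller}, followed by the fixed CPwL map \(H\), by \Cref{prop:two-grid-scalar-readout}. Alongside it, \(t\) is carried through identity channels (\(\ReLU(t)-\ReLU(-t)\)), and the result is fixed width and depth \(O(n)\). The initialization is a fixed CPwL function of \((t,H_n(t))\). It is not a product of two variables, because \(H_n\) enters only linearly into \(Z_\pm^{\rm init}\), and \(\xi_s(t)\) enters only through the switch. So \(D_0(n)=O(n)\). The second pass has the architecture of \Cref{thm:fixed-width-atlas-adjoint-recursion}, recomputing \(z_0(t)=\emb([t])\) from the carried \(t\), which adds depth \(O(n)\). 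The final readout \(\langle\cdot,u\rangle\) is linear.

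Using \Cref{rem:cpwl-basic}, the depths add to \(O(n)\) and the width stays bounded, with constants depending only on the fixed data and on \(h,u,\lambda\). For the small values of \(n\), padding with identity layers to depth exactly \(C_1n\) yields \eqref{eq:reference-network-class}.
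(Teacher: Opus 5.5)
Your proof is correct and follows the paper's argument: the initialization lemma, the fixed-width recursion theorem, the ordinary-chart pairing wherever safe-exactness controls \(\psi_{0,n}(t)\), and zero propagation otherwise. The paper splits on \(H_n(t)\neq0\) versus \(H_n(t)=0\) rather than on whether \(y_n\in K_0\), which is an immaterial difference. One small slip is that \(\mathcal S_A\) is not positively homogeneous in \(\Psi\), because the gate \(\Pi_A\) has the offset \(A(1-\lambda)\); this does not matter, since you only use \(\mathcal S_A(z,0)=0\), which your argument via \(\Pi_A(\lambda,0)=0\) correctly establishes.
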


\begin{proof}
The first pass computes
\(H_n(t)=h([2^n t])\)
with fixed width and depth \(O(n)\), while carrying \(t\) through an identity channel.  By
\Cref{lem:continuous-atlas-initialization}, the resulting map
\(t\mapsto\Psi_0(t)\) is safe-exact over \(y_0=[t]\), witnessed by
\(\Phi_0(t)=H_n(t)\Lambda_0(t)\).  The bounds
\eqref{eq:reference-initial-state-bound} is independent of \(n\), so
\Cref{thm:fixed-width-atlas-adjoint-recursion} applies.  It produces, with fixed width and
depth \(O(n)\), a terminal state
\(\Psi_n(t)=\bigl(\psi_{0,n}(t),\psi_{s,n}(t)\bigr)\)
that is safe-exact over \(y_n=[2^n t]\), witnessed by \(\Phi_n(t)\).

Suppose first that \(H_n(t)\ne0\).  Then the ordinary representative of \(y_n\) belongs to
\(\supp h\subset J_{\rm ref}\).  By
\eqref{eq:reference-interval-ordinary}, we therefore have \(y_n\in K_0\), and safe-exactness gives
\[
        \psi_{0,n}(t)
        =
        \Rep_0^I\bigl(y_n,\Phi_n(t)\bigr).
\]
The pairing identity \eqref{eq:rank-one-adjoint-pairing-identity} now yields
\[
        \mathcal N_{n,\lambda,W}(t)
        =
        \langle\psi_{0,n}(t),u\rangle
        =
        F_{n,\lambda,W}(t).
\]

If \(H_n(t)=0\), then the explicit initialization gives
\(\Psi_0(t)=(0,0)\), while \(\Phi_0(t)=0\).  The atlas update maps preserve the zero state, and
the intrinsic adjoint recursion is linear.  Hence
\[
        \Psi_n(t)=(0,0),
        \qquad
        \Phi_n(t)=0.
\]
Consequently, both \(\mathcal N_{n,\lambda,W}(t)\) and
\(F_{n,\lambda,W}(t)\) vanish.  This proves
\eqref{eq:reference-atom-network-correctness}.

The first pass and the atlas recursion each have fixed width and depth \(O(n)\).  The
initialization and terminal linear pairing add only fixed overhead.  Absorbing this overhead into
the constants gives \eqref{eq:reference-network-class}.
\end{proof}

Choosing \(\lambda\) among the standard coordinate covectors extracts the finitely many coordinate
patches needed for the subsequent gluing argument.

\begin{corollary}[Coordinate realization of a reference rank-one atom]
\label{cor:rank-one-coordinate-realization}
Let \(W\) be a reference rank-one terminal atom, and let
\(w_n^{(t)}\in E\) denote the representative of \(W_n([t])\) in the lift \(t\).
Then there exist integers \(C_0,C_1\ge1\), independent of \(n\) and \(i\in I\), such that
\[
        [t\mapsto (w_n^{(t)})_i]
        \in
        \Ups_{C_0,C_1n}(\ReLU;1,1),
        \qquad
        i\in I,\quad n\ge1.
\]

In particular, suppose that \(u=e_0\) and that \(W=W_h\) is the residual section induced by the
zero extension of \(h\).  Then
\begin{equation}
\label{eq:reference-function-patches}
        \bigl(w_n^{(t)}\bigr)_i
        =
        (V^nh)(t+i),
        \qquad
        i\in I.
\end{equation}
\end{corollary}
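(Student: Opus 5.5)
The plan is to apply \Cref{thm:rank-one-realization} once for each coordinate covector \(\lambda=e_i^*\in E_I^*\), \(i\in I\), and then check the identification \eqref{eq:reference-function-patches} by unwinding the definition of the section \(W_h\).

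For the first assertion, fix \(i\in I\) and take \(\lambda=e_i^*\). Its zero extension \(\lambda^\infty\) is the \(i\)-th coordinate functional on \(E\). The intrinsic covector \(\Lambda_0(t)\) is represented by \(\lambda^\infty\) in the lift \(t\), and \(W_n([t])\) is represented by \(w_n^{(t)}\) in the same lift. Since the pairing is lift-independent, we get
\[
F_{n,e_i^*,W}(t)=\langle\lambda^\infty,w_n^{(t)}\rangle=(w_n^{(t)})_i .
\]
\Cref{thm:rank-one-realization} then gives \(F_{n,e_i^*,W}\in\Ups_{C_0^{(i)},C_1^{(i)}n}(\ReLU;1,1)\) for every \(n\ge1\), with constants depending on \(h,u,e_i^*\) and the fixed data but not on \(n\). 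Since \(I\) is a fixed finite set, we set \(C_0:=\max_iC_0^{(i)}\) and \(C_1:=\max_iC_1^{(i)}\). A network of smaller width or depth can be padded to the uniform width \(C_0\) and depth \(C_1n\) by adding zero neurons and identity channels \(\ReLU(x)-\ReLU(-x)\). The latter may double the width, and we absorb that factor into \(C_0\).

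For the identification, let \(u=e_0\) and let \(f\) denote the zero extension of \(h\). We first check that \(W_h\) coincides with the reference atom. In the ordinary lift \(\bar y\in[0,1)\), the representative of \(W_f\) is \((f(\bar y+i))_{i\in\Z}\). Since \(\supp h\subset J_{\rm ref}\subset(0,1)\), the only nonzero entry is \(f(\bar y)=h(\bar y)\), in position \(0\). So this representative is exactly \(h(\bar y)e_0^\infty\), which is \eqref{eq:reference-atom-ordinary-representative}.

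Next, the primal cascade of \Cref{subsec:residual-fibers-invariant-cascade} gives \(W_n=W_{V^nf}\). The representative of \(W_{V^nf}([t])\) in the lift \(t\) is \(\bigl((V^nf)(t+i)\bigr)_{i\in\Z}\). Taking the \(i\)-th coordinate yields \eqref{eq:reference-function-patches}, where \(V^nh\) means \(V^n\) applied to the zero extension. This holds for every \(t\in[0,1]\), including \(t=1\), because the section is intrinsic and the coordinate formula is valid in any lift.

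The only step requiring care is the bookkeeping. One must check that the lift \(t\) used in \Cref{thm:rank-one-realization} to define \(\Lambda_0(t)\) is the same lift in which \(w_n^{(t)}\) is read off, so that no chart shift \(C_\kappa\) appears. Beyond that, one only needs the uniformity of the constants over the finite set \(I\).
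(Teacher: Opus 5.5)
Your proposal is correct and follows the paper's proof. You apply \Cref{thm:rank-one-realization} with \(\lambda=e_i^*\), identify the pairing with the \(i\)-th coordinate in the common lift \(t\), take constants uniform over the finite set \(I\), and read off \eqref{eq:reference-function-patches} from \(W_n=W_{V^nf}\); your added check that \(W_f\) has ordinary representative \(h(\bar y)e_0^\infty\), so that it really is the reference atom with \(u=e_0\), is a detail the paper leaves implicit.
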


\begin{proof}
For each \(i\in I\), apply \Cref{thm:rank-one-realization} with
\(\lambda=e_i^*\).  Since \(\Lambda_0(t)\) is represented in the lift \(t\) by the zero extension
of \(e_i^*\), one has
\[
        \bigl\langle
                \Lambda_0(t),W_n([t])
        \bigr\rangle
        =
        \bigl(w_n^{(t)}\bigr)_i.
\]
The required constants can be chosen uniformly over \(i\) because \(I\) is finite.

For \(W=W_h\), the definition of the residual section \(W_f\) and the identity
\(W_m=W_{V^m f}\) following \eqref{eq:invariant-primal-cascade} give
\((w_n^{(t)})_i=(V^nh)(t+i)\), 
which proves \eqref{eq:reference-function-patches}.
\end{proof}

\section{General compactly supported CPwL seeds}
\label{sec:general-cpwl-seeds}

We now extend the realization result from the localized reference atoms of
\Cref{sec:rank-one-terminal-atoms} to arbitrary compactly supported CPwL seeds.  First, the
coordinate realizations of a reference atom are assembled across the relevant unit intervals to
obtain a realization of its refined iterate on \(\R\).  We then decompose a general seed into a
finite sum of translated reference atoms and use the linearity and translation covariance of the
homogeneous refinement operator.

\subsection{Global realization of reference unit atoms}
\label{ss:global-reference-unit-atoms}

Let \(h\in\CPwL(\R)\) satisfy \(\supp h\subset J_{\rm ref}\).
Let \(e_0\in E_I\) be the coordinate vector at index \(0\), and let \(W_h\) be the corresponding
intrinsic reference atom, whose representative in the ordinary lift \(\bar y\) is
\(h(\bar y)e_0^\infty\), where \(e_0^\infty:=\jmath_Ie_0\).
By \Cref{cor:rank-one-coordinate-realization}, for every \(i\in I\), the map
\(t\mapsto (V^nh)(t+i)\)
admits an exact fixed-width, depth-\(O(n)\) ReLU realization on \([0,1]\).

To assemble these unit-interval realizations, we use preservation of integer zeros.
Since \(\supp h\Subset(0,1)\), one has \(h(k)=0\) for every \(k\in\Z\).  More generally, if
\(g\) vanishes on \(\Z\), then, for every \(k\in\Z\), we have
\[
        (Vg)(k)
        =
        \sum_{j\in\Z} c_j g(2k-j)
        =
        0,
\]
because \(2k-j\in\Z\).  Hence \(V^nh\) vanishes at every integer for all \(n\ge0\).

\begin{proposition}[Global realization of reference unit atoms]
\label{prop:unit-endpoint-zero-atom}
Suppose that \(h\in\CPwL(\R)\) satisfies \(\supp h\subset J_{\rm ref}\).  
Then there exist integers \(C_0,C_1\ge1\), independent of \(n\), such that
\[
        V^nh\in\Ups_{C_0,C_1n}(\ReLU;1,1),
        \qquad
        n\ge1.
\]
Thus \(V^nh\) is realized exactly on \(\R\) by networks of fixed width and depth \(O(n)\).
\end{proposition}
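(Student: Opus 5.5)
Since \(V\) preserves the window \([0,L]\) and \(\supp h\subset J_{\rm ref}\subset[0,1]\subset[0,L]\), every iterate \(V^nh\) is supported in \([0,L]\). The plan is therefore to write
\[
        V^nh(t)=\sum_{i=0}^{L-1} g_{n,i}(t-i),
        \qquad
        g_{n,i}(\tau):=
        \begin{cases}
        (V^nh)(\tau+i), & \tau\in[0,1],\\
        0, & \tau\notin[0,1],
        \end{cases}
\]
and to realize each summand globally. This decomposition is exact pointwise because interior points of \([i,i+1]\) receive a contribution from exactly one summand. At integer points several summands may be active, but all of them vanish there, since \(V^nh\) vanishes on \(\Z\) by the integer-zero preservation just established. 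The same fact guarantees \(g_{n,i}(0)=g_{n,i}(1)=0\), so each \(g_{n,i}\) is continuous on \(\R\).

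By \Cref{cor:rank-one-coordinate-realization} with \(u=e_0\), and because \(I_{\rm phys}\subset I\), each map \(\tau\mapsto (V^nh)(\tau+i)\) agrees on \([0,1]\) with some network \(N_{n,i}\in\Ups_{C_0,C_1n}(\ReLU;1,1)\). The main obstacle is to cut \(N_{n,i}\) off outside \([0,1]\) exactly, without multiplying network outputs. I would first clamp the input, replacing \(\tau\) by \(c(\tau):=\min\{1,\max\{0,\tau\}\}\), which is a fixed CPwL map. The network \(N_{n,i}\circ c\) equals \(g_{n,i}\) on \([0,1]\) and is constant, equal to \(N_{n,i}(0)=0\) or \(N_{n,i}(1)=0\), outside it. Hence \(N_{n,i}\circ c=g_{n,i}\) on all of \(\R\), with no gating needed. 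The clamping adds only constant depth and width by \Cref{rem:cpwl-basic}(iii),(v), and the translation \(t\mapsto t-i\) is an affine precomposition.

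Finally, I would sum the \(L\) networks \(t\mapsto (N_{n,i}\circ c)(t-i)\), for \(i=0,\ldots,L-1\), in parallel using \Cref{rem:cpwl-basic}(iv). Since \(L\) is fixed, this gives width \(O(LC_0)\) and depth \(C_1n+O(1)\), which is at most \(C_1'n\) for \(n\ge1\). This proves \(V^nh\in\Ups_{C_0',C_1'n}(\ReLU;1,1)\). Exactness on all of \(\R\) follows from the piecewise identity above together with the support preservation of the window, so nothing outside \([0,L]\) needs to be produced.
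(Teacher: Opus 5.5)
Your proof is correct. It differs from the paper's only in how the unit patches are assembled.

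The paper uses the same patches $F_k^{(n)}(t)=(V^nh)(t+k)$, $t\in[0,1]$. It records the joining identities $F_k^{(n)}(1)=F_{k+1}^{(n)}(0)$, which follow from continuity, and it uses the vanishing of $V^nh$ only at the two outer endpoints $0$ and $L$. It then invokes the external gluing lemma (Lemma~3.12 of the loop-controller paper), whose ramps evaluate each patch network only on $[0,1]$.

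You instead use the integer-zero property at every interior integer as well. Because each patch vanishes at both of its endpoints, precomposing with the clamp $c(\tau)=\ReLU(\tau)-\ReLU(\tau-1)$ turns each patch network into an exact global realization of a function supported in $[i,i+1]$. The assembly is then a plain parallel sum.

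Your route is self-contained, needs no outside lemma, and gives explicit constants: width of order $L\max\{C_0,2\}$ and depth $C_1n+1$. The price is that it depends on the interior integer zeros, which hold here only because $\supp h\Subset(0,1)$. The paper's gluing ignores the interior joining values, so it would apply verbatim to the unit patches of any continuous function supported in $[0,L]$.

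Your clamp idea can reach that same generality with a telescoping correction:
$V^nh(t)=F_0^{(n)}(c(t))+\sum_{i=1}^{L-1}\bigl(F_i^{(n)}(c(t-i))-F_i^{(n)}(0)\bigr)$.
When all $F_i^{(n)}(0)$ vanish, this reduces to your sum.
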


\begin{proof}
The support-window hypothesis and
\(\supp h\subset[0,L]\) imply
\(\supp V^nh\subset[0,L]\) for every \(n\ge0\).  
For \(k=0,\ldots,L-1\), define
\[
        F_k^{(n)}(t)
        :=
        (V^nh)(t+k),
        \qquad
        0\le t\le1.
\]
Since \(k\in I_{\rm phys}\subset I\), each \(F_k^{(n)}\) has a fixed-width,
depth-\(O(n)\) realization by
\Cref{cor:rank-one-coordinate-realization}, uniformly over \(k\).

The patches satisfy the joining identities
\[
        F_k^{(n)}(1)
        =
        F_{k+1}^{(n)}(0),
        \qquad
        k=0,\ldots,L-2,
\]
because \(V^nh\) is continuous.  The integer-zero property also gives the outer endpoint
conditions
\[
        F_0^{(n)}(0)=0,
        \qquad
        F_{L-1}^{(n)}(1)=0.
\]
After relabeling \(f_{k+1}:=F_k^{(n)}\), the hypotheses of the gluing construction
\cite[Lemma~3.12]{loop} are satisfied.  It therefore combines the finitely many patches into a
global realization equal to \(V^nh\) on \([0,L]\) and to zero outside this interval.  The gluing
ramps evaluate each patch network only at inputs in \([0,1]\), where its realization is exact.
Since \(L\) is fixed, this step adds only fixed width and fixed depth.
\end{proof}

\subsection{Decomposition into translated reference atoms}
\label{ss:translated-reference-atoms}

We next show that every compactly supported CPwL seed is a finite linear combination of
translations of profiles supported in the fixed reference interval \(J_{\rm ref}\).
For \(\delta\in\R\), let
\[
        (\tau_\delta g)(t):=g(t-\delta).
\]

\begin{proposition}[Finite reference-atom decomposition]
\label{prop:reference-atomic-decomposition}
Every compactly supported CPwL function \(f:\R\to\R\) admits a representation
\begin{equation}
\label{eq:reference-atomic-decomposition}
        f
        =
        \sum_{\nu=1}^{N_*}\tau_{\delta_\nu}h_\nu,
        \qquad
        \supp h_\nu\subset J_{\rm ref},
\end{equation}
where \(N_*\), the translations \(\delta_\nu\), and the local CPwL complexities of the profiles
\(h_\nu\) are finite and independent of the refinement depth \(n\).
\end{proposition}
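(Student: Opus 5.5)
We prove the decomposition with a CPwL partition of unity built from hat functions whose supports have length smaller than that of \(J_{\rm ref}\). Write \(J_{\rm ref}=[a,b]\) with \(0<a<b<1\), and set \(\ell:=b-a>0\). Choose a mesh size \(\varepsilon>0\) with \(2\varepsilon\le\ell\), for instance \(\varepsilon:=\ell/2\). This quantity is fixed by the atlas data and does not depend on \(f\) or \(n\).

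Let \(\phi\) be the standard hat function supported on \([-\varepsilon,\varepsilon]\), with peak value \(1\) at \(0\). Its translates \(\phi(\cdot-k\varepsilon)\), \(k\in\Z\), form a CPwL partition of unity on \(\R\). Since \(f\) has compact support, only finitely many of these translates meet \(\supp f\); let \(k\) range over this finite set \(\mathcal K\). Then
\[
        f=\sum_{k\in\mathcal K} f\,\phi(\cdot-k\varepsilon).
\]
Each summand \(g_k:=f\,\phi(\cdot-k\varepsilon)\) is supported in \([(k-1)\varepsilon,(k+1)\varepsilon]\), an interval of length \(2\varepsilon\le\ell\).

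The one point to check is that each \(g_k\) is again CPwL, since a product of two CPwL functions is in general only piecewise quadratic. We handle this by abandoning the product and taking nodal interpolation instead. Refine the mesh so that the breakpoint set \(B\) of \(f\) is contained in the grid: take the common refinement of \(\varepsilon\Z\) and \(B\), and use the nodal hat basis \(\{\phi_p\}\) of that refined nodal set \(P\). Then \(f=\sum_{p\in P} f(p)\phi_p\) holds exactly, because \(f\) is affine between consecutive nodes. Each \(\phi_p\) is supported between neighbouring nodes, so on an interval of length at most \(2\varepsilon\le\ell\). If needed, we first subdivide the nodal set further so that all gaps are at most \(\varepsilon\).

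It remains to translate each summand into the reference interval. For each node \(p\) with \(\supp\phi_p=[p^-,p^+]\subset[p^-,p^-+\ell]\), set \(\delta_p:=p^- - a\) and \(h_p:=f(p)\,\tau_{-\delta_p}\phi_p\). Then \(\supp h_p\subset[a,a+\ell]=J_{\rm ref}\) and \(\tau_{\delta_p}h_p=f(p)\phi_p\). Summing over \(p\) gives \eqref{eq:reference-atomic-decomposition}, with \(N_*=\#P\) bounded by the number of breakpoints of \(f\) plus \(|\supp f|/\varepsilon+2\). Each \(h_\nu\) has at most three breakpoints.

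None of the data \(N_*\), \(\delta_\nu\), or \(h_\nu\) involves \(n\), so the independence claim is immediate. The only genuine difficulty is the CPwL-closure issue, and the nodal interpolation resolves it.
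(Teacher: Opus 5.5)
Once you discard the partition-of-unity product (correctly, since it is not CPwL), your argument is essentially the paper's proof: a nodal hat expansion on a mesh that contains every breakpoint of \(f\) and has spacing at most half the length of \(J_{\rm ref}\), followed by translating each hat into \(J_{\rm ref}\). One small correction: your node set \(P\supset\varepsilon\Z\) is infinite, so \(N_*\) should count only the finitely many nodes with \(f(p)\neq0\), not \(\#P\).
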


\begin{proof}
Let \(\ell_{\rm ref}:=|J_{\rm ref}|>0\).  Choose a compact interval \([a,b]\) containing
\(\supp f\) in its interior, and a finite partition
\[
        a=t_0<t_1<\cdots<t_N=b
\]
containing every breakpoint of \(f\).  Refining the partition if necessary, arrange that
\[
        t_{k+1}-t_k<\frac{\ell_{\rm ref}}2,
        \qquad
        k=0,\ldots,N-1.
\]
Since \(f(a)=f(b)=0\) and \(f\) is affine on each subinterval, its nodal expansion is
\[
        f
        =
        \sum_{k=1}^{N-1} f(t_k)\varphi_k,
\]
where \(\varphi_k\) is the nodal hat function at \(t_k\).  Its support is
\([t_{k-1},t_{k+1}]\), whose length is strictly smaller than \(\ell_{\rm ref}\).

For each \(k\) with \(f(t_k)\neq0\), choose \(\delta_k\in\R\) so that the translated support of
\(\varphi_k\) lies in \(J_{\rm ref}\), and define
\[
        h_k(t):=f(t_k)\varphi_k(t+\delta_k).
\]
Then \(\supp h_k\subset J_{\rm ref}\) and
\(\tau_{\delta_k}h_k=f(t_k)\varphi_k\).  Substituting these identities into the nodal expansion
and relabeling the nonzero terms gives \eqref{eq:reference-atomic-decomposition}.

All resulting data depend only on \(f\) and the fixed interval \(J_{\rm ref}\), and not on \(n\).
Moreover, each \(h_\nu\) is a translated nodal hat and therefore has uniformly bounded local CPwL
complexity.
\end{proof}

\subsection{Translation covariance and the main theorem}
\label{ss:translation-covariance-main-theorem}

The homogeneous binary refinement operator satisfies
\begin{equation}
\label{eq:translation-covariance}
        V^n(\tau_\delta g)
        =
        \tau_{2^{-n}\delta}(V^ng),
        \qquad
        \delta\in\R,\quad n\ge0.
\end{equation}
Indeed,
\(V(\tau_\delta g)(t)
=\sum_j c_j g(2t-j-\delta)
=(Vg)(t-\delta/2)
=\tau_{\delta/2}(Vg)(t)\),
and \eqref{eq:translation-covariance} follows by induction.

\begin{theorem}[Exact realization of binary refinement iterates]
\label{thm:main-realization}
Let
\[
        (Vf)(t)
        =
        \sum_{j\in\Z}c_jf(2t-j)
\]
be a scalar binary refinement operator with finitely supported mask, and assume that \(V\)
preserves the support window \([0,L]\) in the sense of
\eqref{eq:support-preservation}.  Let \(f:\R\to\R\) be compactly supported and CPwL, with
\(\supp f\subset[0,L]\).

Then there exist integers \(C_0,C_1\ge1\), depending only on the mask, \(L\), and a finite CPwL
description of \(f\), but not on \(n\), such that
\begin{equation}
\label{eq:main-realization-conclusion}
        V^nf
        \in
        \Ups_{C_0,C_1n}(\ReLU;1,1),
        \qquad
        n\ge1.
\end{equation}
\end{theorem}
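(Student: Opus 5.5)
The plan is to combine the finite decomposition of \Cref{prop:reference-atomic-decomposition} with translation covariance \eqref{eq:translation-covariance} and the global realization of \Cref{prop:unit-endpoint-zero-atom}. First apply \Cref{prop:reference-atomic-decomposition} to write
\[
        f=\sum_{\nu=1}^{N_*}\tau_{\delta_\nu}h_\nu,\qquad \supp h_\nu\subset J_{\rm ref},
\]
with \(N_*\), \(\delta_\nu\) and the CPwL complexities of \(h_\nu\) independent of \(n\). By linearity and \eqref{eq:translation-covariance},
\[
        V^nf=\sum_{\nu=1}^{N_*}\tau_{2^{-n}\delta_\nu}\bigl(V^nh_\nu\bigr),
\]
so that \((V^nf)(t)=\sum_\nu (V^nh_\nu)(t-2^{-n}\delta_\nu)\) for every \(t\in\R\).

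Next, realize each term. Since \(J_{\rm ref}\Subset(0,1)\subset[0,L]\), each \(h_\nu\) satisfies the hypotheses of \Cref{prop:unit-endpoint-zero-atom}. That proposition gives \(V^nh_\nu\in\Ups_{C_0^\nu,C_1^\nu n}(\ReLU;1,1)\) exactly on all of \(\R\). This global exactness matters here, because the argument \(t-2^{-n}\delta_\nu\) ranges over a shifted copy of \(\R\). The map \(t\mapsto t-2^{-n}\delta_\nu\) is affine, so it can be absorbed into the first layer by \Cref{rem:cpwl-basic}(iii). The weights and biases then depend on \(n\), which the paper explicitly permits, while width and depth are unchanged. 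The finitely many terms are then run in parallel and summed by a final linear layer, as in \Cref{rem:cpwl-basic}(iv). Depths are equalized by padding shorter networks with identity channels \(\ReLU(x)-\ReLU(-x)\), which at most doubles width. The result is width \(C_0=2\sum_\nu C_0^\nu\) and depth \(C_1n\) with \(C_1=\max_\nu C_1^\nu+O(1)\), all independent of \(n\).

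The main obstacle, mild as it is, is the dependence of the constants. We must check that the constants \(C_0^\nu,C_1^\nu\) from \Cref{prop:unit-endpoint-zero-atom} depend only on the mask, \(L\), the fixed atlas data and \(h_\nu\); tracing back, this includes \(u=e_0\) and the coordinate covectors \(\lambda=e_i^*\), \(i\in I\). Since the \(h_\nu\) come from \(f\) through a decomposition fixed independently of \(n\), this gives the claimed dependence on the mask, \(L\) and a finite CPwL description of \(f\). A further check is that \Cref{prop:unit-endpoint-zero-atom} is stated for \(h\) in \(\CPwL(\R)\) supported in \(J_{\rm ref}\), which the \(h_\nu\) are. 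No window-preservation issue arises for the translated atoms, because covariance is applied globally on \(\R\) and never requires \(\tau_{\delta_\nu}h_\nu\) itself to sit inside a particular cell. Finally, the trivial case \(f=0\), or a zero mask, is immediate.
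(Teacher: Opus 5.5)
Your proposal is correct and follows the paper's proof essentially step for step: it uses the finite reference-atom decomposition, linearity together with translation covariance, the global exact realization of each $V^nh_\nu$ from \Cref{prop:unit-endpoint-zero-atom}, affine precomposition with the $n$-dependent shift $t\mapsto t-2^{-n}\delta_\nu$, and parallel summation. Your extra bookkeeping (equalizing depths with identity channels, and tracing the constants through $u=e_0$ and $\lambda=e_i^*$) only makes explicit what the paper leaves implicit.
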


\begin{proof}
By \Cref{prop:reference-atomic-decomposition}, write
\[
        f
        =
        \sum_{\nu=1}^{N_*}\tau_{\delta_\nu}h_\nu,
        \qquad
        \supp h_\nu\subset J_{\rm ref}.
\]
Linearity and \eqref{eq:translation-covariance} give
\[
        V^nf
        =
        \sum_{\nu=1}^{N_*}
        \tau_{2^{-n}\delta_\nu}(V^nh_\nu).
\]

For each \(\nu\), \Cref{prop:unit-endpoint-zero-atom} provides an exact fixed-width,
depth-\(O(n)\) realization of \(V^nh_\nu\).  Precomposing this realization with the affine map
\(t\mapsto t-2^{-n}\delta_\nu\) produces
\(\tau_{2^{-n}\delta_\nu}(V^nh_\nu)\) without changing its width or depth.  Although the affine
shift depends on \(n\), this affects only the network weights and biases.

Finally, the number \(N_*\) is independent of \(n\).  The finitely many translated realizations
can therefore be run in parallel and summed by an affine output layer, increasing the width only
by a fixed factor and leaving the depth \(O(n)\).  This proves
\eqref{eq:main-realization-conclusion}.
\end{proof}

\section{Conclusion}
\label{sec:conclusions}

We have given an exact fixed-width, depth-\(O(n)\) ReLU realization of finite homogeneous scalar
binary refinement iterates using a two-chart atlas selector.  The scalar binary conclusion was
already known by other methods; the contribution here is a different treatment of residual
discontinuities.  The loop controller of \cite{loop} transports the residual orbit exactly on a
polygonal model of the circle, while the residual cascade is formulated intrinsically as a local
system and represented by finite padded chart windows.

The load-bearing mechanism is atlas overlap agreement.  The ordinary and half-shifted charts have
disjoint same-chart branch cuts, and selector transitions are confined to trusted overlaps where the
competing local descriptions are simultaneously valid.  There, canonical windows of one intrinsic
covector satisfy exact finite transition identities, so the corresponding local adjoint updates agree
after passage to a common source chart.  The scalar atlas readout requires only the natural circle
compatibility condition \(h(0)=h(1)\), without positivity or endpoint vanishing.  Terminal
localization serves a separate purpose: reference rank-one atoms are supported inside one ordinary
safe chart, realized by a two-pass construction that reruns the forward residual orbit, and then
extended to general compactly supported CPwL seeds by translation covariance, finite atomic
decomposition, and finite patch gluing.

The one-dimensional binary case provides a complete model for broader multichart refinement
systems.  In such settings, residual discontinuities may be handled by exact agreement of local
atlas descriptions, while terminal data are reduced separately to local reference atoms.  The chart
geometry used here is fixed independently of the refinement depth, so no stage-dependent
sharpening of seam neighborhoods is required.  Any crude exponential gate scale that remains comes
from possible growth of the finite adjoint products rather than from the atlas selectors themselves.

\section*{Acknowledgments}

This work was supported by the Natural Sciences and Engineering Research Council of Canada
through its Discovery Grants program.

\end{document}